\def\norm#1{\|#1\|}
\newcommand{\les}{{\lesssim}}
\newcommand{\ges}{{\gtrsim}}
\newcommand{\F}{\mathcal{F}}
\newcommand{\cS}{\mathcal{S}}
\newcommand{\cP}{\mathcal{P}}
\newcommand{\cU}{\mathcal{U}}
\newcommand{\cN}{\mathcal{N}}
\newcommand{\cV}{\mathcal{V}}
\newcommand{\cW}{\mathcal{W}}
\newcommand{\K}{\mathcal{K}}
\newcommand{\N}{\mathbb{N}}
\newcommand{\R}{\mathbb{R}}
\newcommand{\Z}{\mathbb{Z}}
\newcommand{\al}{\alpha}
\newcommand{\be}{\beta}
\newcommand{\de}{\delta}
\newcommand{\e}{\varepsilon}
\newcommand{\fy}{\varphi}
\newcommand{\om}{\omega}
\newcommand{\la}{\lambda}
\newcommand{\te}{\theta}
\newcommand{\ka}{\kappa}
\newcommand{\x}{\xi}
\newcommand{\y}{\eta}
\newcommand{\ft}{{\mathcal{F}}}
\newcommand{\wh}{\widehat}
\newcommand{\De}{\Delta}
\newcommand{\Om}{\Omega}
\newcommand{\p}{\partial}
\newcommand{\na}{\nabla}
\newcommand{\im}{\mathop{\mathrm{Im}}}
\newcommand{\supp}{\operatorname{supp}}
\newcommand{\lec}{\lesssim}
\newcommand{\I}{\infty}
\newcommand{\fr}{\frac}
\newcommand{\ti}{\widetilde}
\newcommand{\ba}{\overline}
\newcommand{\LR}[1]{{\langle #1 \rangle}}
\newcommand{\lp}[1]{{\bf #1}}
\newcommand{\pe}[1]{#1^{\operatorname{>J}}}
\newcommand{\np}[1]{\mathfrak{#1}}
\newcommand{\tand}{\text{ and }}
\newcommand{\EQ}[1]{\begin{equation}\begin{split} #1 \end{split}\end{equation}}
\newcommand{\Del}[1]{}
\newcommand{\CAS}[1]{\begin{cases} #1 \end{cases}}
\newcommand{\pt}{&}
\newcommand{\pr}{\\ &}
\newcommand{\pq}{\quad}
\newcommand{\pn}{}
\numberwithin{equation}{section}
\newtheorem{thm}{Theorem}[section]
\newtheorem{cor}[thm]{Corollary}
\newtheorem{lem}[thm]{Lemma}
\theoremstyle{remark}
\newtheorem{rem}{Remark}
\begin{document}

\title[Global dynamics below ground state for Klein-Gordon-Zakharov]{Global dynamics below the ground state energy for the Klein-Gordon-Zakharov system
in the 3D radial case}
\author[Z. Guo, K. Nakanishi, S. Wang]{Zihua Guo, Kenji Nakanishi, Shuxia Wang}

\address{LMAM, School of Mathematical Sciences, Peking
University, Beijing 100871, China}
\email{zihuaguo@math.pku.edu.cn}

\address{Department of Mathematics, Kyoto University, Kyoto 606-8502,
Japan}

\email{n-kenji@math.kyoto-u.ac.jp}

\address{School of Mathematical Sciences, Peking
University, Beijing 100871, China}

\email{wangshuxia@pku.edu.cn}

\begin{abstract}
We consider the global dynamics below the ground state energy for the Klein-Gordon-Zakharov system in the 3D radial case; and obtain the dichotomy between scattering and finite time blow up.

\end{abstract}

\maketitle

\section{Introduction}
In this paper, we continue our study \cite{GNW} on the global Cauchy problem for the 3D Klein-Gordon-Zakharov system \EQ{\label{eq:KGZ1}
 \CAS{ \ddot{u} - \De u +u= nu,\\
   \ddot n/\al^2 - \De n = -\De u^2,}}
with the initial data
\begin{align}
u(0,x)=u_0,\,\dot{u}(0,x)=u_1,\  n(0,x)=n_0,\,\dot n(0,x)=n_1,
\end{align}
where $(u,n)(t,x):\R^{1+3}\to\R\times\R$, and $\al>0, \al\neq 1$ denotes the
ion sound speed. It preserves  the energy \EQ{
 E(u,\dot{u},n,\dot{n})=\int_{\R^3}\frac{|u|^2+|\na u|^2+|\dot{u}|^2}{2}+\frac{|D^{-1}\dot n|^2/\al^2+|n|^2}{4}-\frac{n|u|^2}{2} dx,}
where $D:=\sqrt{-\De}$, as well as the radial symmetry.

This system describes the interaction between Langmuir waves and ion
sound waves in a plasma (see \cite{B}, \cite{D}). The local
well-posedness (for arbitrary initial data) and global
well-posedness (for small initial data) of \eqref{eq:KGZ1} with
$\al<1$ in the energy space $H^1\times L^2$ was proved by Ozawa,
Tsutaya and Tsutsumi in \cite{OTT}. We point out that
\eqref{eq:KGZ1} does not have null form structure as in Klainerman
and Machedon \cite{SKMM} and this suggests that when $\alpha=1$ the
system \eqref{eq:KGZ1} may be locally ill-posed in $H^1\times L^2$
(cf. the counter example of Lindblad \cite{L} for similar
equations). Hence, we suppose $\al\neq 1$ here. When the first
equation of \eqref{eq:KGZ1} is replaced by $c^{-2}\ddot{u} - \De u
+c^2u= -nu$, Masmoudi and Nakanishi
studied the limit system ($c, \alpha\rightarrow\I$) and the behavior of their solutions in a
series of papers \cite{MN1}-\cite{MN3}. The instability of standing
wave of Klein-Gordon-Zakharov system was studied in \cite{GGZ},
\cite{GZ} and \cite{OT}. Recently, in \cite{GNW} the authors
obtained scattering for radial initial data with small energy in the
3D case, by using the normal form reduction and radial-improved
Strichartz estimates. The purpose of this paper is to consider the
global dynamics for larger data under the radial symmetry.
The idea of this paper is the same with \cite{GNW2}, in which we studied
the global dynamics of Zakharov system. The main difference is that we can prove blow-up in finite time on one side of the dichotomy of global dynamics, whereas for the Zakharov system the existence of any blow-up solution is still an open problem in three dimensions.

It is well known \cite{BL,C,S} that there exists a unique radial
positive ground state $Q(x)$, solving the static equation \EQ{
\label{stat-KG}
 -\De Q + Q = Q^3,}
with the least energy \EQ{
 J(Q):=\int_{\R^3}\frac{|Q|^2+|\na Q|^2}{2}-\frac{|Q|^4}{4}dx>0,}
among all nontrivial solutions of \eqref{stat-KG}.

Since the Klein-Gordon-Zakharov system \eqref{eq:KGZ1} has the following
radial standing waves \EQ{\label{eq:KGZstandingwave}
 (u,n)=(\pm Q,Q^2),}
the goal of this study is to determine global dynamics of all the
radial solutions $"below"$ the above family of special solutions, in
the spirit of Kenig-Merle \cite{KM}, namely the variational
dichotomy into the scattering solutions and the blowup solutions.
For the dichotomy, we need
to introduce two functionals (for Klein-Gordon equation), both of which are the scaling
derivative of the static Klein-Gordon energy $J$: \EQ{\label{functional}
&K_0(\fy):=\p_{\la}|_{\la=1}J(\la\fy(x))
 =\int_{\R^3}\left(|\fy|^2+|\na\fy|^2-|\fy|^4\right)dx,\\
 &K_2(\fy):=\p_{\la}|_{\la=1}J(\la^{3/2}\fy(\la x))
 =\int_{\R^3}\left(|\na\fy|^2-\frac{3|\fy|^4}{4}\right)dx.}
 The main result of this paper is

\begin{thm}\label{thm}
Assume that \EQ{\label{eq:indata}
 (u_0,u_1,n_0,n_1)\in H^1_r(\R^3)\times L^2_r(\R^3)\times L^2_r(\R^3)\times \dot H^{-1}_r(\R^3)}
is radial and
satisfies
\begin{align}\label{eq:Ethm}
  E(u_0,u_1,n_0,n_1)<J(Q).
\end{align}
Then for both $i=0,2$, we have

(a) if $K_i(u_0)\ge 0$, then \eqref{eq:KGZ1} has a unique global solution $(u,n)$, which scatters both as $t\to\I$ and as $t\to-\I$
in the energy space;

(b) if $K_i(u_0)<0$, then the solution $(u,n)$ of \eqref{eq:KGZ1} blows up in  finite  time.
\end{thm}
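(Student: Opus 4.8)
The plan is to run the concentration--compactness/rigidity scheme of Kenig--Merle \cite{KM}, in the variational form used by Nakanishi--Schlag for the nonlinear Klein--Gordon equation and in our companion paper \cite{GNW2} on the Zakharov system, with the small-energy radial scattering theory of \cite{GNW} --- normal-form reduction plus radial-improved Strichartz estimates --- as the base case. The starting point is the variational structure: completing the square in $n$ rewrites the conserved energy as
\[
 E(u,\dot u,n,\dot n)=J(u)+\tfrac12\|\dot u\|_{L^2}^2+\tfrac1{4\al^2}\|D^{-1}\dot n\|_{L^2}^2+\tfrac14\|n-|u|^2\|_{L^2}^2,
\]
so $E\ge J(u)$ at every time. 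From the scaling identities \eqref{functional} and the least-energy property of $Q$ one has the standard facts: $J(Q)=\inf\{J(\fy):\fy\ne0,\ K_i(\fy)=0\}$ for $i=0,2$; the conditions $K_0\ge0$ and $K_2\ge0$ cut out the same subset of $\{J<J(Q)\}$; $J(\fy)\gec\|\fy\|_{H^1}^2$ on $\{J<J(Q),\,K_i\ge0\}$; and $K_i(\fy)\le-\de<0$ with $\de>0$ depending on $J(Q)-E$ on $\{J<J(Q),\,K_i<0\}$. Since $E$ is conserved, $t\mapsto K_i(u(t))$ is continuous, and $K_i>0$ near $u=0$, the sets $\{E<J(Q),\,K_i(u)\ge0\}$ and $\{E<J(Q),\,K_i(u)<0\}$ are invariant along the flow. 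The global existence in (a) is then immediate: on the first set, coercivity and the displayed energy bound $\|u\|_{H^1}+\|\dot u\|_{L^2}+\|n\|_{L^2}+\|D^{-1}\dot n\|_{L^2}$ uniformly in $t$ by a constant depending only on $E$, which the local well-posedness theory in the energy space (\cite{OTT}, with the radial estimates of \cite{GNW}) iterates to a global solution.

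For the scattering half of (a), let $E^*$ be the supremum of energies below which every radial solution with $K_i(u_0)\ge0$ scatters; $E^*>0$ by \cite{GNW}. Supposing $E^*<J(Q)$, take non-scattering solutions with energies converging down to $E^*$ and apply a linear profile decomposition for the KGZ free evolution; since the mass term kills the scaling symmetry and radiality removes translations (and Lorentz boosts), no noncompact symmetry survives, and the nonlinear perturbation lemma (built on the normal-form reduction of \cite{GNW}) together with the variational trapping of the profiles forces a single profile, giving a critical element: a global, non-scattering radial solution $(u_c,n_c)$ with $E=E^*$, $K_i(u_c)\ge0$, whose orbit is precompact in the energy space. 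Precompactness and the variational analysis then give $K_2(u_c(t))\ge\e_0>0$ uniformly in $t$ (a limit $\fy$ with $K_2(\fy)=0$ would satisfy $J(\fy)\le E^*<J(Q)$, hence $\fy=0$, making $u_c$ scatter). A localized virial identity of the schematic form
\[
 \tfrac{d}{dt}\Big(\int \phi_R\,(x\cdot\na u)\,\dot u\,dx+\tfrac32\int \phi_R\, u\,\dot u\,dx\Big)=-K_2(u)+(\text{errors from }n-|u|^2\text{ and from the cutoff }\phi_R)
\]
then makes the left side $\le-\e_0/2$ for $R$ large while the bracketed functional stays bounded --- a contradiction. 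Hence $E^*=J(Q)$, and scattering holds for every $E<J(Q)$.

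For part (b) I would run a convexity argument of Payne--Sattinger/Levine type on $y(t):=\|u(t)\|_{L^2}^2$, which satisfies $\tfrac12\ddot y=\|\dot u\|_{L^2}^2-K_0(u)+\int(n-|u|^2)|u|^2\,dx$. On the invariant set $-K_0(u)\ge\de>0$ drives $\ddot y$; using energy conservation to rewrite the interaction term (so that the wave-field energy $\tfrac1{2\al^2}\|D^{-1}\dot n\|_{L^2}^2+\tfrac12\|n\|_{L^2}^2$ enters with favorable sign), the inequality $\dot y^2\le4\|u\|_{L^2}^2\|\dot u\|_{L^2}^2$, and a positive lower bound for $\|u(t)\|_{H^1}$ valid on the invariant set, one aims at a Levine-type differential inequality $(y^{-\nu})''\le0$ for some $\nu>0$, which forces $y\to\I$ in finite time --- in both time directions, since $\{E<J(Q),\,K_i(u)<0\}$ is invariant backward as well. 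The crux is dominating the coupling term $\int(n-|u|^2)|u|^2\,dx$: its analogue for the Zakharov system is exactly the obstruction that leaves blow-up open there, whereas for \eqref{eq:KGZ1} the Klein--Gordon mass term and the sign of the wave-field energy make the estimate close.

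The main obstacle is the scattering half of (a): establishing the linear profile decomposition and, above all, the nonlinear perturbation/stability theory in the energy space for the non-resonant, null-structure-free KGZ system --- which forces one through the normal-form reduction and the radial-improved Strichartz estimates of \cite{GNW} --- and then extracting the critical element, proving its precompactness, and closing the virial rigidity, carrying the coupling to the wave field $n$ through every estimate.
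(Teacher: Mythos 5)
Your overall roadmap --- variational trapping to secure global existence and sign invariance, a Kenig--Merle concentration--compactness scheme driven by the normal form and radial-improved Strichartz estimates of \cite{GNW}, and a Payne--Sattinger/Levine convexity argument for blow-up --- matches the paper's strategy and correctly flags where the work lies. But two steps, as you have written them, would not close, and each hides the key idea of the corresponding section of the paper.

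For the rigidity argument, your schematic localized virial retains only the $u$-part, $-2\LR{\psi_R\dot u,(x\cdot\nabla+\tfrac32)u}$, and relegates the $n$-coupling to ``errors from $n-|u|^2$.'' That coupling is \emph{not} an error term: the quantity $\langle n-u^2,u^2\rangle$ is $O(1)$ on the critical orbit and must be absorbed, not discarded. The paper's virial $I_R(t)$ carries an additional $n$-piece $-\frac{1}{\alpha^2}\LR{\psi_R D^{-1}\dot n,D^{-1}(x\cdot\nabla+2)n}$ precisely so that $I_R'$ produces the coercive terms $\frac{1}{2\alpha^2}\|\dot n\|_{\dot H^{-1}}^2+\frac12\|n-u^2\|_{L^2}^2$, against which the coupling is controlled via Lemma~\ref{lem:var} (the bound $4K_2+\nu^2\ge\sqrt{6}\nu\|u\|_{L^4}^2$). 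Also, the uniform pointwise bound $K_2(u_c(t))\ge\e_0$ you assert from precompactness is stronger than what actually follows --- Lemma~\ref{lem:sign} only yields $K_2\ge\min(2(J(Q)-E),\delta\|\nabla u\|_{L^2}^2)$, and $\|\nabla u\|$ may dip along a subsequence. The paper instead proves a time-averaged coercivity $\int_0^t(K_2+\|\dot n\|_{\dot H^{-1}}^2+\|n-u^2\|_{L^2}^2)\,dt>ctE$ (Step 1 of the rigidity proof), which is what the virial comparison actually needs.

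For part (b), a Levine argument on $y(t)=\|u\|_{L^2}^2$ alone does not close. Using the energy identity one finds $\ddot y=4\|\dot u\|_{L^2}^2-4E+\frac{1}{\alpha^2}\|\dot n\|_{\dot H^{-1}}^2+\|n\|_{L^2}^2$, while Cauchy--Schwarz gives only $\dot y^2\le 4y\|\dot u\|_{L^2}^2$; the Levine inequality $y\ddot y\ge(1+\e)\dot y^2$ would require $\ddot y\ge 4(1+\e)\|\dot u\|_{L^2}^2$, which fails whenever $\|\dot u\|$ dominates the wave-field terms. This is not saved by the KG mass term or the sign of the wave energy, as you suggest. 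The paper's crucial device is to augment the functional to $\tilde I_2(t)=\|u\|_{L^2}^2+\frac{1}{2\alpha^2}\|n-n_{0,K}\|_{\dot H^{-1}}^2$, where $n_{0,K}=P_{\le -K}n_0$ is a low-frequency truncation of $n_0$ chosen so that $\|n_{0,K}\|_{L^2}=\e_0\ll 1$. The $n$-component contributes the missing $\|\dot n\|_{\dot H^{-1}}^2$ to both the second derivative and the Cauchy--Schwarz step, aligning the constants to give $\tilde I_2\tilde I_2''\ge\frac54(\tilde I_2')^2$ for large $t$ (after using the slow growth $\tilde I_1=\|u\|^2\to\I$ from uniform convexity to absorb $-6E$). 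The truncation handles two problems at once: $n_0\in L^2$ need not lie in $\dot H^{-1}$ (so one cannot write $\|n-n_0\|_{\dot H^{-1}}$), and the leftover cross term $\LR{n_{0,K},n-u^2}$ must be small, which is where $\e_0\ll1$ enters. Without the $n$-augmented $\tilde I_2$ and the truncation trick, the blow-up does not follow; and a separate step is still needed at the end to convert $\tilde I_2\to\I$ into blow-up of $\|u\|_{H^1}$, via the Duhamel representation of $n$ on $(0,T^*)$.

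Two minor remarks: the paper proves blow-up without using radial symmetry (as noted in Remark 3), and in your virial sketch the sign should be $+2K_2(u)$, not $-K_2(u)$; this does not affect the verdict above.
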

\begin{rem}
 The condition \eqref{eq:Ethm} is sharp in view of the standing
wave solutions \eqref{eq:KGZstandingwave}which satisfies $E=J(Q)$ and $K_i=0$ with different behavior from $(a)$ and $(b)$.
\end{rem}
\begin{rem}
The result $(b)$ is also true for non-radial case. See the proof in Section \ref{sec:blow up}.
\end{rem}

\section{Hamiltonian and variational structures}

\subsection{Virial identity}
We derive a virial identity on $\R^d$ here, which is similar to that in \cite{GNW2} and will play a crucial role in the proof of
the scattering.

Let
\EQ{I(t)=-2\LR{\dot{u},(x\cdot\nabla+\frac{d}{2})u}-\frac{1}{\alpha^2}\LR{D^{-1}\dot{n},D^{-1}(x\cdot\nabla+\frac{d+1}{2})n},}
by integration by parts we have
\EQ{I'(t)&=2\|\nabla u\|_{L^2}^2+\frac{1}{2}\norm{n}_{L^2}^2+\frac{1}{2\alpha^2}\norm{\dot{n}}_{\dot{H}^{-1}}^2
-\frac{d+1}{2}\int_{\R^d}nu^2dx\\
&=2K_2(u)+\frac{1}{2\alpha^2}\norm{\dot{n}}_{\dot{H}^{-1}}^2+\frac{1}{2}\norm{n-u^2}_{L^2}^2-\frac{d-1}{2}\LR{n-u^2,u^2}.}

\subsection{Variational estimates}
Let
\EQ{G_0(\fy):=J-\frac{K_0}{4}=\frac{1}{4}\|\fy\|^2_{H^1}, \ \ \ G_2(\fy):=J-\frac{K_2}{3}=\frac{1}{6}\|\nabla\fy\|^2_{L^2}+\frac{1}{2}\|\fy\|^2_{L^2},}
The following characterization of the ground state $Q$ is well known (cf. Lemma 2.1 in \cite{NS}).
\begin{lem}
For $i=0,2$,
\EQ{\label{infima}J(Q)&=\inf\{J(\fy) | 0\neq \fy\in H^1, K_i=0\}\\
&=\inf\{G_i(\fy) | 0\neq \fy\in H^1, K_i\leq0\},}
and these infima are achieved uniquely by the ground states $\pm Q$.
\end{lem}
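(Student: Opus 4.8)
First note that both infima are $\le J(Q)$: the ground state solves \eqref{stat-KG}, so multiplying by $Q$ gives $\|\na Q\|_{L^2}^2+\|Q\|_{L^2}^2=\|Q\|_{L^4}^4$ (hence $K_0(Q)=0$), and multiplying by $x\cdot\na Q$ gives the Pohozaev relation $\tfrac12\|\na Q\|_{L^2}^2+\tfrac32\|Q\|_{L^2}^2=\tfrac34\|Q\|_{L^4}^4$; combining the two yields $\|\na Q\|_{L^2}^2=3\|Q\|_{L^2}^2$ and $\|Q\|_{L^4}^4=4\|Q\|_{L^2}^2$, so $K_2(Q)=0$ as well. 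Thus $J(Q)=G_i(Q)$ and $Q$ is admissible in both variational problems, so the content of the lemma is the reverse inequality together with the identification of the minimizers. I would prove this in three steps: (i) the two infima coincide; (ii) the common value $m_i$ is attained; (iii) a minimizer solves \eqref{stat-KG}, hence $m_i=J(Q)$ and the minimizers are $\pm Q$.

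For (i), write $\fy^\la$ for the $i$-scaling ($\la\fy$ if $i=0$, $\la^{3/2}\fy(\la x)$ if $i=2$), so that $\p_\la|_{\la=1}J(\fy^\la)=K_i(\fy)$. A direct computation gives $K_0(\fy^\la)=\la^2\|\fy\|_{H^1}^2-\la^4\|\fy\|_{L^4}^4$ with $G_0(\fy^\la)=\tfrac{\la^2}{4}\|\fy\|_{H^1}^2$, and $K_2(\fy^\la)=\la^2\|\na\fy\|_{L^2}^2-\tfrac34\la^3\|\fy\|_{L^4}^4$ with $G_2(\fy^\la)=\tfrac{\la^2}{6}\|\na\fy\|_{L^2}^2+\tfrac12\|\fy\|_{L^2}^2$; in particular $\la\mapsto G_i(\fy^\la)$ is nondecreasing while $K_i(\fy^\la)>0$ for small $\la>0$. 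Hence if $0\neq\fy$ has $K_i(\fy)\le0$ there is $\la_0\in(0,1]$ with $K_i(\fy^{\la_0})=0$, and
\[
G_i(\fy)\ \ge\ G_i(\fy^{\la_0})\ =\ J(\fy^{\la_0})\ \ge\ \inf\{J(\psi):0\neq\psi\in H^1,\ K_i(\psi)=0\},
\]
which gives $\inf\{G_i:K_i\le0\}\ge\inf\{J:K_i=0\}$; the reverse is trivial since $K_i=0$ forces $J=G_i$. Call the common value $m_i$. Since $J=G_i\gtrsim\|\cdot\|_{H^1}^2$ on $\{K_i=0\}$ and $K_i(\fy)=0$ with $\fy\neq 0$ forces $\|\fy\|_{H^1}^2=\|\fy\|_{L^4}^4\lesssim\|\fy\|_{H^1}^4$ by Sobolev, we get $m_i\in(0,J(Q)]$.

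For (ii), take a minimizing sequence $\fy_n$ with $K_i(\fy_n)=0$, $J(\fy_n)=G_i(\fy_n)\to m_i$; it is bounded in $H^1_r(\R^3)$, so by the Strauss compact embedding $H^1_r(\R^3)\hookrightarrow L^4(\R^3)$, along a subsequence $\fy_n\rightharpoonup\fy_\infty$ in $H^1$ and $\fy_n\to\fy_\infty$ in $L^4$. If $\fy_\infty=0$ then $\|\fy_n\|_{L^4}\to0$, and $K_i(\fy_n)=0$ forces $\|\fy_n\|_{H^1}\to0$, so $J(\fy_n)\to0$, contradicting $m_i>0$; hence $\fy_\infty\neq0$. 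By weak lower semicontinuity of the quadratic part and $L^4$-convergence of the quartic part, $K_i(\fy_\infty)\le0$; were it $<0$, rescaling as in (i) gives $\la_0<1$ with $K_i(\fy_\infty^{\la_0})=0$ and $G_i(\fy_\infty^{\la_0})<G_i(\fy_\infty)\le\liminf G_i(\fy_n)=m_i$, contradicting the definition of $m_i$. So $K_i(\fy_\infty)=0$ and $J(\fy_\infty)\le\liminf J(\fy_n)=m_i$: $\fy_\infty$ is a minimizer. For (iii), a Lagrange multiplier $\mu$ gives $J'(\fy_\infty)=\mu K_i'(\fy_\infty)$; pairing with $\p_\la|_{\la=1}\fy_\infty^\la$ yields $0=K_i(\fy_\infty)=\mu\,\p_\la|_{\la=1}K_i(\fy_\infty^\la)$, and the last derivative equals $-2\|\fy_\infty\|_{L^4}^4$ (if $i=0$) or $-\tfrac34\|\fy_\infty\|_{L^4}^4$ (if $i=2$) when $K_i(\fy_\infty)=0$, hence is strictly negative; so $\mu=0$ and $-\De\fy_\infty+\fy_\infty=\fy_\infty^3$. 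By definition of $Q$ as the least-energy nontrivial solution, $m_i=J(\fy_\infty)\ge J(Q)$, so with (i) we get $m_i=J(Q)$; and since a minimizer is a nonzero radial solution of \eqref{stat-KG} of least energy, the uniqueness of the radial ground state \cite{BL,C,S} together with the $\fy\mapsto-\fy$ symmetry forces $\fy_\infty=\pm Q$, while conversely $\pm Q$ are minimizers by the first paragraph.

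The only genuinely delicate point is the compactness in (ii) — ruling out that the minimizing sequence spreads to spatial infinity or flattens to zero amplitude — which is exactly where the radial assumption enters, via the Strauss compact embedding into $L^4$; the other nontrivial input is the strict sign of $\p_\la|_{\la=1}K_i(\fy^\la)$ on $\{K_i=0\}$, which kills the Lagrange multiplier and is the elementary computation above. This is the analogue of Lemma 2.1 in \cite{NS}, and the argument is essentially the same.
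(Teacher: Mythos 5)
The paper offers no proof of this lemma --- it simply cites it as Lemma~2.1 of \cite{NS} --- so there is nothing in-paper to compare against. Your argument (equate the two infima by the $i$-scaling, extract a minimizer via the Strauss compact embedding $H^1_r\hookrightarrow L^4$, then kill the Lagrange multiplier by pairing with the scaling generator) is the standard route, and for $i=0$ it is essentially complete.

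For $i=2$, however, you twice invoke the $i=0$ identity ``$K_i(\fy)=0 \Rightarrow \|\fy\|_{H^1}^2=\|\fy\|_{L^4}^4$'' when the actual constraint is $\|\na\fy\|_{L^2}^2=\tfrac34\|\fy\|_{L^4}^4$, which does not control $\|\fy\|_{L^2}$. This breaks two steps: (a) the bound $m_2>0$, since Sobolev $\|\fy\|_{L^4}^4\lesssim\|\fy\|_{H^1}^4$ alone does not bound $\|\fy\|_{H^1}$ from below on $\{K_2=0\}$; and (b) the exclusion of vanishing, since $\|\fy_n\|_{L^4}\to0$ together with $K_2(\fy_n)=0$ forces only $\|\na\fy_n\|_{L^2}\to 0$, not $\|\fy_n\|_{H^1}\to0$. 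Both are repaired by replacing Sobolev with Gagliardo--Nirenberg, $\|\fy\|_{L^4}^4\lesssim\|\na\fy\|_{L^2}^3\|\fy\|_{L^2}$: on $\{K_2=0\}$ this gives $\|\na\fy\|_{L^2}\|\fy\|_{L^2}\gtrsim1$, whence $G_2(\fy)\gtrsim 1$ by AM--GM (fixing (a)), and a sequence with $\|\na\fy_n\|_{L^2}\to0$ but bounded $\|\fy_n\|_{L^2}$ cannot satisfy that lower bound (fixing (b)). Finally, the last step ``hence $\fy_\infty=\pm Q$'' is too quick: $\fy_\infty$ is a priori only a least-energy nontrivial solution of \eqref{stat-KG}, possibly sign-changing. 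One should first observe that $|\fy_\infty|$ satisfies $K_i(|\fy_\infty|)\le0$ and $G_i(|\fy_\infty|)\le G_i(\fy_\infty)=m_i$, hence is also a minimizer with $K_i(|\fy_\infty|)=0$ (rescaling would otherwise decrease $G_i$ strictly), so one may take the minimizer nonnegative; elliptic regularity and the strong maximum principle then give positivity, and the uniqueness of the positive radial ground state from \cite{BL,C,S} yields $\fy_\infty=\pm Q$.
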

Since \EQ{
 E(u,\dot{u},n,\dot{n})&=\int_{\R^3}\frac{|u|^2+|\na u|^2+|\dot{u}|^2}{2}+\frac{|D^{-1}\dot{n}|^2/\al^2+|n|^2}{4}-\frac{n|u|^2}{2} dx\\
 &=J(u)+\int_{\R^3}\frac{|\dot{u}|^2}{2}+\frac{|D^{-1}\dot{n}|^2}{4\al^2}+\frac{|n-u^2|^2}{4} dx\geq J(u),}
by energy conservation and Lemma 2.12 in \cite{IMN}, we have
\begin{lem}\label{lem:sign}
Assume that $(u,n)$ is a solution  to \eqref{eq:KGZ1} with maximal
interval $I$ satisfying
\begin{align}
  E(u_0,u_1,n_0,n_1)<J(Q).
\end{align}
Then there exist $\delta>0$ such that for all $t\in I$, one has either
\begin{align}\begin{cases}
 K_0(u)\leq-2(J(Q)-E(u_0,u_1,n_0,n_1)),\\K_2(u)\leq-2(J(Q)-E(u_0,u_1,n_0,n_1)),\end{cases}
\end{align}
or
\begin{align}\begin{cases}
 K_0(u)\geq\min(2(J(Q)-E(u_0,u_1,n_0,n_1)), \delta \|u\|^2_{H^1}),\\ K_2(u)\geq\min(2(J(Q)-E(u_0,u_1,n_0,n_1)), \delta \|\nabla u\|^2_{L^2}). \end{cases}
\end{align}
Especially, $K_0(u)$ and $K_2(u)$ have the same sign
and neither of them changes the sign on $I$.
\end{lem}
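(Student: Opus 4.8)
The plan is to reduce the lemma to a time-independent variational statement about a single function $\fy=u(t)\in H^1_r$, prove that statement by elementary scaling arguments combined with the variational characterization \eqref{infima} of $Q$, and then propagate it along the flow by a connectedness argument. Write $m:=J(Q)$ and $E:=E(u_0,u_1,n_0,n_1)$, and recall from \eqref{functional} that $K_0$ and $K_2$ are the derivatives of $J$ along the scalings $\fy\mapsto s\fy$ and $\fy\mapsto s^{3/2}\fy(sx)$, respectively.

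\emph{Reduction.} The identity $E=J(u)+\frac12\norm{\dot u}_{L^2}^2+\frac1{4\al^2}\norm{D^{-1}\dot n}_{L^2}^2+\frac14\norm{n-u^2}_{L^2}^2$ displayed just above the lemma gives $J(u)\le E$, so by conservation of $E$ we have $J(u(t))\le E<m$ for all $t\in I$. It thus suffices to prove the following static claim: for any $\fy\in H^1$ with $J(\fy)\le E<m$, \emph{either} $K_0(\fy)\le-2(m-E)$ and $K_2(\fy)\le-2(m-E)$, \emph{or} $K_0(\fy)\ge\min(2(m-E),\de\norm{\fy}_{H^1}^2)$ and $K_2(\fy)\ge\min(2(m-E),\de\norm{\na\fy}_{L^2}^2)$, with an absolute $\de>0$. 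Indeed, once this is known, continuity of $t\mapsto u(t)\in H^1_r$ (from the local theory, see \cite{OTT}) makes $t\mapsto K_i(u(t))$ continuous; the last assertion of the lemma then follows because the two alternatives cut $\{\fy:J(\fy)<m\}$ into \emph{disjoint relatively open} subsets, so the connected orbit $\{u(t):t\in I\}$ lies entirely in one of them.

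\emph{The two branches of the static claim.} If $K_i(\fy)<0$ then $\fy\ne0$, so the second line of \eqref{infima} gives $G_i(\fy)\ge m$; since $J=G_i+K_i/c_i$ with $(c_0,c_2)=(4,3)$ and $c_i\ge2$, we get $K_i(\fy)=c_i\bigl(J(\fy)-G_i(\fy)\bigr)\le c_i(E-m)\le-2(m-E)$. If instead $K_i(\fy)\ge0$: when $\fy=0$ both $K_i$ vanish and the claimed bound is $\min(2(m-E),0)=0$, so nothing is needed; when $\fy\ne0$, $K_i(\fy)=0$ is impossible because it would force $J(\fy)\ge m$ by the first line of \eqref{infima}, hence $K_i(\fy)>0$. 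Rescale by the scaling defining $K_i$, i.e.\ put $\fy_s:=s\fy$ for $i=0$ and $\fy_s:=s^{3/2}\fy(sx)$ for $i=2$. As $K_i(\fy)>0$, there is a unique $s_*>1$ with $K_i(\fy_{s_*})=0$, and then $J(\fy_{s_*})\ge m$ by the first line of \eqref{infima}. A direct computation along the ray $s\mapsto J(\fy_s)$ (using $\frac{d}{ds}J(\fy_s)=\frac1s K_i(\fy_s)$) gives $J(\fy_{s_*})-J(\fy)=K_i(\fy)\,P_i(s_*)$ with $P_0(s)=\tfrac{s^2-1}{4}$, $P_2(s)=\tfrac{(s-1)(s+2)}{6}$ — both increasing in $s>1$ and tending to $0$ as $s\downarrow1$ — while $K_0(\fy)=(1-s_*^{-2})\norm{\fy}_{H^1}^2$ and $K_2(\fy)=(1-s_*^{-1})\norm{\na\fy}_{L^2}^2$. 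Choose $s_0:=\sqrt3$, so that $P_i(s_0)\le\tfrac12$ for $i=0,2$. If $s_*\le s_0$, then $K_i(\fy)=\bigl(J(\fy_{s_*})-J(\fy)\bigr)/P_i(s_*)\ge2\bigl(J(\fy_{s_*})-J(\fy)\bigr)\ge2(m-E)$; if $s_*\ge s_0$, then $K_i(\fy)\ge(1-s_0^{-1})\times(\text{the relevant norm})$. So the second alternative holds with $\de:=1-s_0^{-1}$.

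\emph{Same sign, and the topological conclusion.} What is still missing is that, for a fixed $\fy$ with $J(\fy)<m$, the \emph{same} alternative occurs for $i=0$ and $i=2$, equivalently $\operatorname{sign}K_0(\fy)=\operatorname{sign}K_2(\fy)$; this is the only genuinely nontrivial ingredient, and it is Lemma 2.12 in \cite{IMN} (see also \cite{NS}). Its mechanism: writing $K_{\al,\be}(\fy):=\p_\la\big|_{\la=0}J\bigl(e^{\al\la}\fy(e^{-\be\la}x)\bigr)$, one has $K_0=K_{1,0}$ and $K_2=K_{3/2,1}$, and for every $(\al,\be)$ on the segment joining these two points the constrained minimum $\inf\{J(\psi):0\ne\psi\in H^1,\ K_{\al,\be}(\psi)=0\}$ still equals $m$ (attained at $Q$, which annihilates all $K_{\al,\be}$ because it is a critical point of $J$); hence $K_{\al,\be}(\fy)\ne0$ on $\{0\ne\fy:J(\fy)<m\}$ for all such $(\al,\be)$, and continuity of $(\al,\be)\mapsto K_{\al,\be}(\fy)$ on a connected set forces its sign to be constant there. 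Consequently $\{J<m\}=\cN\sqcup\cP$ with $\cN:=\{J<m,\ K_0<0\}=\{J<m,\ K_2<0\}$ open and $\cP$ (its complement inside $\{J<m\}$) open as well — on $\cP$ either $K_i(\fy)>0$, where continuity applies, or $\fy=0$, near which $K_i\ge0$ since the quartic terms in $K_0,K_2$ are dominated by the corresponding quadratic terms (Sobolev and Gagliardo--Nirenberg). As $I$ is an interval and $t\mapsto u(t)\in H^1_r$ is continuous with $J(u(t))<m$, the orbit stays in $\cN$ or in $\cP$ throughout $I$, which together with the branch estimates is exactly the lemma. The single real obstacle here is the sign equality $\operatorname{sign}K_0=\operatorname{sign}K_2$: it hinges on the constrained minimum of $J$ being the \emph{same} value $m$ along the whole family of scalings interpolating the two, and that is where the structure of $Q$ and the concentration--compactness underlying \eqref{infima} genuinely enter.
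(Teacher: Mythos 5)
Your proof is correct and follows essentially the same route as the paper's, which simply observes $J(u(t))\le E(u_0,u_1,n_0,n_1)$ by energy conservation and then cites Lemma 2.12 of \cite{IMN} for the dichotomy. You have helpfully unpacked that citation: the scaling identities $J(\fy_{s_*})-J(\fy)=K_i(\fy)P_i(s_*)$ with $P_0(s)=(s^2-1)/4$, $P_2(s)=(s-1)(s+2)/6$ and the resulting explicit constant $\de=1-3^{-1/2}$ are exactly what underlies the quoted lemma, and you correctly isolate the one genuinely nontrivial input (that $K_0$ and $K_2$ share the same sign on $\{J<J(Q)\}$, via the interpolating family $K_{\al,\be}$) and, like the paper, defer it to \cite{IMN}.
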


\begin{cor}\label{cor:cor}
Assume that $(u,n)$ is a solution to \eqref{eq:KGZ1} with maximal
interval $I$ satisfying
\begin{align}
  E(u_0,u_1,n_0,n_1)<J(Q);\ \ \ K_i(u_0)\geq0\  \text{for}\ i=0,2.
\end{align}
Then $I=(-\I,\I)$, and
moreover,
\begin{align}\label{eq:Eequiv}
 E(u,\dot{u},n,\dot{n})&\sim \norm{u}_{H^1}^2+\norm{\dot{u}}_{L^2}^2+\norm{n}_{L^2}^2+\norm{\dot{n}}_{\dot{H}^{-1}}^2\\
 &\sim
\norm{u_0}_{H^1}^2+\norm{u_1}_{L^2}^2+\norm{n_0}_{L^2}^2+\norm{n_1}_{\dot{H}^{-1}}^2.
\end{align}
\end{cor}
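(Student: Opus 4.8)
The plan is to read off every assertion from Lemma~\ref{lem:sign}, the energy identity displayed just before it, the variational identity $G_0(\fy)=J(\fy)-K_0(\fy)/4=\tfrac14\norm{\fy}_{H^1}^2$, the blow-up alternative of the local Cauchy theory, and conservation of energy. \emph{Step 1 (sign of $K_i$ along the flow).} Since $E<J(Q)$, the first alternative of Lemma~\ref{lem:sign} is impossible at $t=0$ (it would force $K_0(u_0)<0$), so the datum lies in the second alternative; because $K_0(u(t))$ does not change sign on $I$, I get $K_0(u(t))\ge 0$ for all $t\in I$, and likewise $K_2(u(t))\ge 0$.

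\emph{Step 2 (uniform a priori bounds).} I would substitute $J(u)=\tfrac14\norm{u}_{H^1}^2+\tfrac14K_0(u)$ into
\[
E=J(u)+\tfrac12\norm{\dot u}_{L^2}^2+\tfrac1{4\al^2}\norm{\dot n}_{\dot H^{-1}}^2+\tfrac14\norm{n-u^2}_{L^2}^2 ,
\]
so that, whenever $K_0(u)\ge0$, the conserved energy is a sum of five nonnegative terms; hence each of them is $\le E$, giving
\[
\norm{u}_{H^1}^2\le 4E,\quad \norm{\dot u}_{L^2}^2\le 2E,\quad \norm{\dot n}_{\dot H^{-1}}^2\le 4\al^2 E,\quad \norm{n-u^2}_{L^2}^2\le 4E
\]
uniformly on $I$. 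The one point that needs care is that the energy controls $\norm{n-u^2}_{L^2}$ rather than $\norm{n}_{L^2}$; to cross over I would use the Sobolev bound $\norm{u}_{L^4}^2\les\norm{u}_{H^1}^2$ on $\R^3$ together with the bound $\norm{u}_{H^1}^2\le 4E<4J(Q)$ just obtained, so that $\norm{n}_{L^2}\le\norm{n-u^2}_{L^2}+\norm{u}_{L^4}^2\les E^{1/2}+E$, with implicit constant depending only on $J(Q)$.

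\emph{Step 3 (energy equivalence and globality).} Summing the bounds of Step~2 gives $\norm{u}_{H^1}^2+\norm{\dot u}_{L^2}^2+\norm{n}_{L^2}^2+\norm{\dot n}_{\dot H^{-1}}^2\les E$; for the reverse inequality I would use $J(u)\le\tfrac12\norm{u}_{H^1}^2$ together with $\norm{n-u^2}_{L^2}^2\les\norm{n}_{L^2}^2+\norm{u}_{L^4}^4\les\norm{n}_{L^2}^2+\norm{u}_{H^1}^2$ (again via Sobolev and $\norm{u}_{H^1}^2<4J(Q)$) in the energy identity, obtaining $E\les\norm{u}_{H^1}^2+\norm{\dot u}_{L^2}^2+\norm{n}_{L^2}^2+\norm{\dot n}_{\dot H^{-1}}^2$. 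This is the first $\sim$ of \eqref{eq:Eequiv} at every $t\in I$, and since $E$ is conserved, evaluating at $t=0$ yields the second $\sim$. Finally, these bounds show that the energy-space norm of $(u,\dot u,n,\dot n)$ stays $\les E<\I$ throughout $I$, so the blow-up alternative of the local well-posedness theory (\cite{OTT}, resp.\ \cite{GNW}) rules out a finite maximal time in either direction; hence $I=(-\I,\I)$.

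\emph{Main obstacle.} There is no genuine analytic difficulty here: the argument is essentially conservation of energy together with the cited local theory. The only place the sub-ground-state hypothesis is really used is Step~2, where the a priori bound $\norm{u}_{H^1}^2<4J(Q)$ (itself a consequence of $K_0(u)\ge0$ and $E<J(Q)$) is precisely what permits absorbing the quadratic term $\norm{u}_{L^4}^4$ and thereby trading $\norm{n-u^2}_{L^2}$ for $\norm{n}_{L^2}$, as \eqref{eq:Eequiv} requires.
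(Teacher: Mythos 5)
Your proof is correct and follows essentially the same strategy as the paper: invoke Lemma~\ref{lem:sign} for sign preservation of $K_i$, then use the $G_i$-decomposition of $J$ to rewrite the conserved energy as a sum of manifestly nonnegative terms coercive in the energy norm, and finish via Sobolev ($\|u\|_{L^4}\lesssim\|u\|_{H^1}$) to trade $\|n-u^2\|_{L^2}$ for $\|n\|_{L^2}$ and the local theory to conclude $I=\R$. The only (immaterial) difference is that you work with $G_0(u)=J(u)-K_0(u)/4=\tfrac14\|u\|_{H^1}^2$ whereas the paper uses $G_2(u)=J(u)-K_2(u)/3=\tfrac16\|\nabla u\|_{L^2}^2+\tfrac12\|u\|_{L^2}^2$; both yield the same coercivity.
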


\begin{proof}
Since $K_2(u(t))\geq0$, we
get \eqref{eq:Eequiv} immediately from \EQ{\label{energy est}
J(Q)\geq& E(u,\dot{u},n,\dot{n})-K_2(u(t))/3\\
=&\frac{1}{6}\|\na u\|_{L^2}^2+\fr
{\|u\|_{L^2}^2+\|\dot{u}\|_{L^2}^2}2+\frac{\|D^{-1}\dot{n}\|_{L^2}^2}{4\al^2}+\frac
14\|n-u^2\|_{L^2}^2. } By the Sobolev inequality $\norm{u}_{L^4}\les
\norm{u}_{H^1}$, $(u,\dot{u},n,\dot{n})(t)$ is bounded in $H^1\times
L^2\times L^2\times\dot{H}^{-1}$, and thus by the local
wellposedness we have $I=(-\I,\I)$.
\end{proof}

So far, the global well-posedness of part (a) of Theorem \ref{thm} is
proved. It remains to prove the scattering and part (b).
 For all
purposes, the virial estimates play crucial roles and we will use the following key observation.

\begin{lem}\label{lem:var}
Let $\fy\in H^1(\R^3)$ and $\nu\ge 0$ satisfy \EQ{
 J(\fy)+\frac{\nu^2}{4} \le J(Q).}
Then, for $i=0,2$,  we have \EQ{
 \CAS{K_i(\fy) \ge 0 \implies 4K_2(\fy) + \nu^2 \ge \sqrt{6}\nu\|\fy\|_{L^4}^2,\\
 K_i(\fy)\le 0 \implies K_0(\fy)  \le -\nu\|\fy\|_{L^4}^2.}}
\end{lem}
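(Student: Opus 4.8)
The plan is to dispose of $\fy=0$ (where every $K_i$ and $\|\fy\|_{L^4}$ vanish), reduce to a single value of $i$ via the sign coincidence of $K_0$ and $K_2$ below the ground state, and then, for each of the two implications, run a one–parameter scaling argument whose fuel is the ``energy gap'' $J(Q)-J(\fy)\ge \nu^2/4$ handed to us by the hypothesis. For the reduction note first that $J(\fy)\le J(\fy)+\nu^2/4\le J(Q)$, and invoke the variational fact (it underlies Lemma~\ref{lem:sign}; see \cite{IMN}, or argue as follows) that $K_0(\fy)$ and $K_2(\fy)$ have the same sign, so that $K_i(\fy)\ge 0$ forces $K_2(\fy)\ge 0$ and $K_i(\fy)\le 0$ forces $K_0(\fy)\le 0$, for either $i$. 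A self-contained proof goes through the $L^2$-invariant scaling $\psi_s:=s^{3/2}\fy(sx)$: with $J(\psi_s)=\tfrac12 s^2\|\na\fy\|_{L^2}^2+\tfrac12\|\fy\|_{L^2}^2-\tfrac14 s^3\|\fy\|_{L^4}^4$ one has $\tfrac{d}{ds}J(\psi_s)=\tfrac1s K_2(\psi_s)$, so $J(\psi_\cdot)$ is strictly increasing then strictly decreasing with interior maximum at $s_*:=\tfrac{4\|\na\fy\|_{L^2}^2}{3\|\fy\|_{L^4}^4}$, where $K_2(\psi_{s_*})=0$, whence $J(\psi_{s_*})=G_2(\psi_{s_*})\ge J(Q)$ by the infimum characterization~\eqref{infima} and $\operatorname{sign}K_2(\fy)=\operatorname{sign}(s_*-1)$; similarly $s\mapsto K_0(\psi_s)$ is positive near $0$ and changes sign exactly once, at some $s_0$ with $J(\psi_{s_0})=G_0(\psi_{s_0})\ge J(Q)$ and $\operatorname{sign}K_0(\fy)=\operatorname{sign}(s_0-1)$. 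If $s_0$ and $s_*$ lay on opposite sides of $1$, monotonicity of $J(\psi_\cdot)$ would squeeze $J(\fy)=J(\psi_1)$ between $J(Q)$ and $J(\psi_{s_0})$ (or $J(\psi_{s_*})$), forcing equality and hence, by the uniqueness clause in~\eqref{infima}, $\fy=\pm Q$ up to this scaling — impossible since then $K_0(\fy)=K_2(\fy)=0$.

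For the case $K_i(\fy)\le 0$, so $K_0(\fy)\le 0$: use the amplitude scaling $\la\mapsto J(\la\fy)=\tfrac{\la^2}{2}\|\fy\|_{H^1}^2-\tfrac{\la^4}{4}\|\fy\|_{L^4}^4$, with $\tfrac{d}{d\la}J(\la\fy)=\tfrac1\la K_0(\la\fy)$; its maximum is at $\la_0:=\|\fy\|_{H^1}/\|\fy\|_{L^4}^2$, the condition $K_0(\fy)\le 0$ is exactly $\la_0\le 1$, and $K_0(\la_0\fy)=0$ gives $J(\la_0\fy)=G_0(\la_0\fy)\ge J(Q)$ by~\eqref{infima}. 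Subtracting $J(\fy)\le J(Q)-\nu^2/4$ and computing the elementary integral $J(\la_0\fy)-J(\fy)=\int_{\la_0}^{1}\bigl(\la^3\|\fy\|_{L^4}^4-\la\|\fy\|_{H^1}^2\bigr)\,d\la=\tfrac14\|\fy\|_{L^4}^4(1-\la_0^2)^2$ yields $\|\fy\|_{L^4}^4(1-\la_0^2)^2\ge\nu^2$, i.e. $\|\fy\|_{L^4}^2(1-\la_0^2)\ge\nu$; since $-K_0(\fy)=\|\fy\|_{L^4}^4(1-\la_0^2)=\|\fy\|_{L^4}^2\cdot\|\fy\|_{L^4}^2(1-\la_0^2)$, this is exactly $-K_0(\fy)\ge\nu\|\fy\|_{L^4}^2$.

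For the case $K_i(\fy)\ge 0$, so $K_2(\fy)\ge 0$: rerun the argument with $\psi_s$, where now $s_*\ge 1$ (this is $K_2(\fy)\ge 0$), $J(\psi_{s_*})=G_2(\psi_{s_*})\ge J(Q)$, and $\int_1^{s_*}\tfrac{d}{ds}J(\psi_s)\,ds$ evaluates to $J(\psi_{s_*})-J(\fy)=\tfrac18 q(s_*-1)^2(s_*+2)$ with $q:=\|\fy\|_{L^4}^4$, while $K_2(\fy)=\tfrac34 q(s_*-1)$. Writing $x:=s_*-1\ge 0$, the hypothesis gives $\nu^2\le\tfrac q2 x^2(x+3)$, and the claimed inequality becomes $3qx+\nu^2\ge\sqrt{6q}\,\nu$, i.e. the quadratic $\nu^2-\sqrt{6q}\,\nu+3qx\ge 0$. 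For $x\ge\tfrac12$ its discriminant $6q(1-2x)$ is $\le 0$ and there is nothing to prove; for $0\le x<\tfrac12$ one checks that $\nu$ does not exceed the smaller root, i.e. $\sqrt{\tfrac q2 x^2(x+3)}\le\tfrac12\sqrt{6q}\,(1-\sqrt{1-2x})$, which after squaring twice (both steps legitimate since the relevant right-hand sides are nonnegative on $[0,\tfrac12]$) reduces to $x^3(24+21x+6x^2+x^3)\ge 0$ — evidently true.

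I expect this last reconciliation in Step~3 — matching the gap bound $\nu^2\le\tfrac q2 x^2(x+3)$ against the quadratic-in-$\nu$ threshold, which is precisely where the constant $\sqrt6$ is pinned down — to be the one genuinely delicate computation, with the reduction step (and its reliance on the uniqueness in~\eqref{infima}) the other place demanding care; the remaining work is routine bookkeeping with the two scalings.
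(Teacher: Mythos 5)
Your proof is correct and takes essentially the same route as the paper: the amplitude scaling $\mu\fy$ for the $K_i\le 0$ implication, yielding the same identity $J(\mu\fy)-J(\fy)=\frac14(\mu^2-1)^2\|\fy\|_{L^4}^4$ as in the paper's argument, and the $L^2$-invariant scaling $\psi_s$ for the $K_i\ge 0$ implication, which the paper only refers to \cite{GNW2}. Your detailed verification of the $\sqrt6$ constant via the quadratic $\nu^2-\sqrt{6q}\,\nu+3qx\ge0$, reducing after two squarings to $x^3(24+21x+6x^2+x^3)\ge0$, and your self-contained proof that $K_0$ and $K_2$ have the same sign (which the paper obtains from Lemma~\ref{lem:sign}) both check out.
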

\begin{proof}
The first inequality has been proved in \cite{GNW2} by considering the $L^2$ scaling of $\fy$.
Here, we only prove the second one and the third one.

If $K_i(\fy)=0$ then $\nu=0$ and the conclusion is trivial.
Hence we may assume $K_i(\fy)\not=0$ as well as $\nu>0$.

For $K_i(\fy)<0$, consider the $L^0$ scaling of $\fy$. Since \EQ{
 \pt J(\mu\fy)=\frac{\mu^2}{2}\|\fy\|_{H^1}^2-\frac{\mu^4}{4}\|\fy\|_{L^4}^4,
 \pr \mu\p_\mu J(\mu\fy)=K_0(\mu\fy)=\mu^2\|\fy\|_{H^1}^2-\mu^4\|\fy\|_{L^4}^4.}
There is a unique $0<\mu\not=1$ such that \EQ{ \label{eq mu}
 \|\fy\|_{H^1}^2=\mu^2\|\fy\|_{L^4}^4,}
which is equivalent to $K_0(\mu\fy)=0$. Then the variational
characterization of $Q$ implies $J(\mu\fy)\ge J(Q)$, and so \EQ{
 \frac{\nu^2}{4} \le J(\mu\fy)-J(\fy)
  =\frac{(\mu^2-1)^2}{4}\|\fy\|_4^4.}
By denoting $X:=\|\fy\|_4^2/\nu$, the above inequality is rewritten as \EQ{
\label{X lowbd}
 |\mu^2-1|X \ge 1.}
Hence, for $K_0(\fy)<0$, or equivalently $0<\mu<1$, \EQ{
 \frac{\nu\|\fy\|_4^2}{-K_0(\fy)}&=\fr 1{(1-\mu^2)X}<1.}
Therefore, the proof of the lemma is completed.
\end{proof}

\section{Blow up in finite time}\label{sec:blow up}
This section is devoted to proving part (b) of Theorem \ref{thm}.
Suppose for contradiction that the solution $(u,n)$ exists for all $t>0$.
We
define an auxiliary function
\EQ{\tilde{I}_1(t)=\|u(t)\|^2_{L^2}.}
By direct calculation,
\EQ{\tilde{I}_1''(t)&=2\|\dot{u}\|_{L^2}^2-2\|\nabla
u\|_{L^2}^2
-2\| u\|_{L^2}^2+2\int_{\R^3}nu^2dx\\
&=2\norm{\dot{u}}_{L^2}^2-2K_0(u)+2\langle
n-u^2,u^2\rangle.}
By Lemma \ref{lem:var} with $\nu>\|n-u^2\|_{L^2}$ and H\"older, we have \EQ{\label{I
convex}\tilde{I}_1''(t)\geq\kappa,} for some $\kappa\sim
J(Q)-E(u_0,u_1,n_0,n_1)>0$. Hence $\tilde{I}_1(t)$ is a uniformly convex
function of $t$, and $\tilde{I}_1(t)\to \infty$ as $t\to +\infty$.

Let $\psi\in C_0^\I(\R^3)$ be a fixed radial function satisfying $0\le\psi\le 1$,
$\psi(x)=1$ for $|x|\le 1$ and $\psi(x)=0$ for $|x|\ge 2$. For any $n_0\in L^2$, we can choose $K>0$ large enough such that
$\norm{\ft^{-1}\psi(2^K\xi)\ft n_0}_{L^2}=\e_0\ll 1$, where $\e_0$ will be decided later. Denote $n_{0,K}=P_{\leq -K}n_0:=\ft^{-1}\psi(2^K\xi)\ft n_0$. Inspired by \cite{GZ} and \cite{OT}, we set
\EQ{\tilde{I}_2(t)=\|u(t)\|^2_{L^2}+\frac{1}{2\alpha^2}\|n(t)-n_{0,K}\|^2_{\dot{H}^{-1}},}
then \EQ{\tilde{I}_2'(t)=2\langle u(t),
\dot{u}(t)\rangle_{L^2}+\frac{1}{\alpha^2}\langle n(t)-n_{0,K},
\dot{n}(t)\rangle_{\dot{H}^{-1}}}
and for $t$ sufficiently large, we have
\EQ{\tilde{I}_2''(t)&=5\norm{\dot{u}}_{L^2}^2+\frac{5}{2\alpha^2}\norm{\dot{n}}_{\dot{H}^{-1}}^2-6E(u,\dot{u},n,\dot{n})\\
&\quad+\|\nabla
u\|_{L^2}^2+
\|u\|_{L^2}^2+\frac{1}{2}\norm{n}_{L^2}^2+\langle n_{0,K},
n-u^2\rangle\\
&\geq5(\norm{\dot{u}}_{L^2}^2+\frac{1}{2\alpha^2}\norm{\dot{n}}_{\dot{H}^{-1}}^2),}
since $|\langle n_{0,K},
n-u^2\rangle|\leq \norm{n_{0,K}}_{L^2}\norm{n-u^2}_{L^2}\leq C\e_0(\norm{n}_{L^2}+\norm{\nabla u}_{L^2}^2+\norm{u}_{L^2}^2)$, and by choosing $\e_0\ll 1$ such that $C\e_0\leq 1/100$. Thus we get
\EQ{\tilde{I}_2(t)\tilde{I}_2''(t)\geq\frac{5}{4}(\tilde{I}_2'(t))^2.}
Using this result, we can obtain
\EQ{[\tilde{I}_2^{-1/4}(t)]''=-\frac{1}{4}\tilde{I}_2^{-9/4}(t)[\tilde{I}_2(t)\tilde{I}_2''(t)-\frac{5}{4}(\tilde{I}_2'(t))^2]\leq0.}
Hence $\tilde{I}_2^{-1/4}(t)$ is concave for sufficiently large $t$; and there exists a finite time $T^*$ such that $\lim_{t\rightarrow T^* }\tilde{I}_2(t)=\infty$.
Since \EQ{
n=\cos(\al Dt)n_0+\frac{\sin(\al Dt)}{\al D}n_1+\im\alpha\int_0^te^{i\al D(t-s)}D|u(s)|^2ds,}
 we have
\EQ{&\|D^{-1}(n-n_{0,K})\|_{L_t^\infty L_x^2((0,T^*)\times \R^3)}\\
\lesssim& \|D^{-1}(I-P_{\leq -K})n_0\|_{L_x^2(\R^3)}+\alpha T^*\norm{P_{\leq -K}n_0}_2\\
&+T^*\|D^{-1}n_1\|_{L_x^2(\R^3)}+T^*\|u\|_{L_t^\infty L_x^4((0,T^*)\times \R^3)}\\
\lesssim& (2^K+T^*)\|n_0\|_{L_x^2( \R^3)}+T^*\|D^{-1}n_1\|_{L_x^2(\R^3)}+T^*\|u\|_{L_t^\infty
H_x^1((0,T^*)\times \R^3)}.}
Thus one has $T<\I$ such that
\EQ{\limsup_{t\rightarrow T^-}\|u(t)\|^2_{H^1}=\I.}

\section{Concentration-compactness procedure}
It remains to prove the scattering in part (a) of Theorem \ref{thm}.
Thanks to the variational estimates in Section 2, we can proceed as
Kenig-Merle.

To simply the presentation, we rewrite the system \eqref{eq:KGZ1}
into the first order as usual. Let \EQ{ \cU:=u - i\LR D^{-1}\dot
u,\quad  \cN:=n - iD^{-1}\dot n/\al,} where $\LR
D=(I-\triangle)^{1/2}$, then the equations for $(\cU,\cN)$ are
\EQ{\label{eq:KGZ2}
 \CAS{ (i\p_t+\LR D)\cU =\LR D^{-1}(\cN\cU/4+\bar{\cN}\cU/4+
 \cN\bar{\cU}/4+\bar{\cN}\bar{\cU}/4),\\
   (i\p_t+\al D) \cN = \al D(\cU\bar{\cU}/4+\bar{\cU}\cU/4+\cU^2/4+\bar{\cU}^2/4)}}
with initial data $(\cU_0,\cN_0)\in H^1\times L^2$ and energy
\EQ{E(\cU,\cN):=E(u,\dot{u},n,\dot{n}),\ \ \ K_i(\cU):=K_i(u) \ \text{for }i=0,2.}

For each $0\le a\le J(Q)$ and $\la>0$, let
\EQ{&\K^+(a):=\{(f,g)\in H^1_r\times L^2_r\mid E(f,g)<a,\ K_i(f)\ge0,i=0,2\}\\
&\cS(a):=\sup\{\|(\cU,\cN)\|_S \mid (\cU(0),\cN(0))\in\K^+(a),\ \text{$(\cU,\cN)$ sol.}\},}
where $S$ denotes a norm containing almost all the Strichartz norms for radial free solutions, including $L^\I_t(H^1\times L^2)$. See \eqref{def S} for the precise definition. For any time interval $I$, we will denote by $S(I)$ the restriction of $S$ onto $I$.

From Corollary \ref{cor:cor} we already know that all solutions
starting from $\K^+(a)$ stays there globally in time. What we
want to prove is the uniform scattering below the ground state
energy, i.e. $\cS(a)<\I$ for all $a<J(Q)$.
Let \EQ{
 E^* :=\sup\{a>0 \mid \cS(a)<\I\}.}
The small data scattering in \cite{GNW} implies that $E^*>0$, and
the existence of the ground state soliton implies that $E^*\le
J(Q)$. We will prove  $E^*= J(Q)$ by contradiction, and thus
finish the proof of Theorem \ref{thm} (a). The main result in this
section is
\begin{lem}[Existence of critical element]\label{lem:crit}
Suppose $E^*< J(Q)$, then there is a global solution $(\cU,\cN)$ in
$\K^+(a)$ satisfying
 \EQ{
 E(\cU,\cN)=E^*,\ \ \ \|(\cU,\cN)\|_{S(-\I,0)}=\|(\cU,\cN)\|_{S(0,\I)}=\I.
 }
 Moreover, $\{(\cU,\cN)(t)\ |\ t\in\mathbb{R}\}$ is precompact in $H^1_x\times L_x^2$.
\end{lem}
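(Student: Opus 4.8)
The plan is to follow the standard Kenig–Merle concentration–compactness/rigidity scheme, adapted to the first-order Klein–Gordon–Zakharov system \eqref{eq:KGZ2}. The starting point is that, by definition of $E^*$ and the assumption $E^*<J(Q)$, there is a sequence of solutions $(\cU_m,\cN_m)$ with $(\cU_m(0),\cN_m(0))\in\K^+(E^*+1/m)$, data energies $E(\cU_m,\cN_m)\to E^*$, and $\|(\cU_m,\cN_m)\|_{S}\to\I$. By Corollary \ref{cor:cor} each solution is global and its $H^1\times L^2$ norm is uniformly controlled by the energy, so the data sequence is bounded in $H^1_r\times L^2_r$. The goal is to extract from this sequence a single ``critical'' solution that is global, has energy exactly $E^*$, does not scatter in either time direction, and has a precompact trajectory.

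The first main step is a \emph{linear profile decomposition} for the radial free Klein–Gordon and Zakharov propagators $e^{it\LR D}$ and $e^{it\al D}$ in the energy space, together with the associated Strichartz orthogonality. Because we are in the radial setting, the only noncompact symmetry is time translation (there are no space translations to track, and scaling is not a symmetry of the massive equation), so the profiles are parametrized by time-shifts $t_m^j$; one writes $\cU_m(0)=\sum_{j<J}e^{-it_m^j\LR D}\phi^j + w_m^J$ with the usual smallness of the remainder in the relevant Strichartz norm and asymptotic orthogonality of the energies. For the coupling term $\cN$ one does the same with $e^{it\al D}$. Next comes the \emph{nonlinear profiles}: each linear profile is promoted to a nonlinear solution of \eqref{eq:KGZ2}, either the solution with data $\phi^j$ (if $t_m^j$ stays bounded, passing to a limit) or the solution obtained by solving near $t=\pm\I$ and using the small-data scattering theory of \cite{GNW} when $t_m^j\to\pm\I$. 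One then needs the \emph{nonlinear superposition / perturbation lemma}: the sum of the nonlinear profiles, suitably time-translated, is an approximate solution of \eqref{eq:KGZ2}, and a long-time stability result for \eqref{eq:KGZ2} (which must be available in this paper's companion work or proved here using the normal-form and radial-improved Strichartz estimates of \cite{GNW}) upgrades it to the actual solution $(\cU_m,\cN_m)$ up to a small error.

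The second main step is the usual dichotomy argument. Using the energy orthogonality of the profiles plus the variational trapping from Lemma \ref{lem:sign} (which guarantees that $K_i\ge 0$ is inherited by each profile once the total energy is $<J(Q)$, and that each profile has strictly positive energy bounded below unless it is zero), one shows that if there were two or more nonzero profiles, each would have energy strictly below $E^*$, hence each nonlinear profile would have finite $S$-norm by definition of $E^*$, and then the perturbation lemma would force $\|(\cU_m,\cN_m)\|_S$ to be finite and bounded — a contradiction. Therefore there is exactly one profile, the remainder is negligible, and rescaling/translating one obtains a solution $(\cU,\cN)$ with $E(\cU,\cN)=E^*$ and $\|(\cU,\cN)\|_{S(0,\I)}=\|(\cU,\cN)\|_{S(-\I,0)}=\I$; globalness again follows from Corollary \ref{cor:cor}. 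Finally, precompactness of $\{(\cU,\cN)(t):t\in\R\}$ in $H^1_x\times L^2_x$ follows by applying the same one-profile argument to the sequence of time-translates $(\cU,\cN)(t_k)$ for any $t_k\to\pm\I$: if the orbit were not precompact, one such sequence would generate a nontrivial profile decomposition with a remainder not going to zero, contradicting minimality of $E^*$ exactly as above.

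I expect the main obstacle to be the \emph{perturbation/stability lemma} for the system \eqref{eq:KGZ2} and the construction of nonlinear profiles with large time shifts. Unlike the scalar Klein–Gordon case, \eqref{eq:KGZ2} couples a Klein–Gordon wave at frequency $\LR D$ with a genuinely wave-type component at speed $\al D$, and the nonlinearity has no null structure; controlling the error terms in the Duhamel iteration therefore relies essentially on the normal-form reduction and the radial-improved Strichartz estimates developed in \cite{GNW}, and one has to check that these survive the profile surgery (time translations commuting with the normal form, the low-frequency issues in $\cN$ handled as in the blow-up section, etc.). A secondary technical point is that $\cN$ lives only in $L^2$ (with $\dot H^{-1}$ on $\dot n$), so the profile decomposition and its orthogonality must be set up carefully at that regularity, and the energy functional $E$ must be shown to be continuous along the decomposition in a way compatible with the variational estimates of Section 2.
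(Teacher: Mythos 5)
Your proposal is correct and follows essentially the same route as the paper: a radial profile decomposition with time shifts for the free propagators $e^{it\LR D}$ and $e^{it\al D}$ (Lemma \ref{free prof}), nonlinear profiles built from initial- or final-data problems, a perturbation/stability lemma (Lemmas \ref{Stability} and \ref{per}) resting on the refined normal-form and radial-improved Strichartz estimates of Section 4.2, and then the standard energy-orthogonality dichotomy to force a single profile and precompactness of the orbit. You also correctly identify the genuine technical crux (making the stability lemma and nonlinear profile construction survive the low-regularity $\cN\in L^2$ component and the lack of null structure), which is exactly where the paper invests its effort before invoking the Kenig--Merle machinery as in \cite{GNW2}.
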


We will prove this lemma by the following concentration-compactness
procedure. The main difference from  Klein-Gordon is that we need to
work with the solutions after the normal form transform. In
particular, we have some nonlinear terms without time integration
(or the Duhamel form). Besides that, we have various different
interactions, for which we need to use different norms or exponents.

\subsection{Profiles for the radial Klein-Gordon-Zakharov}
First we recall the free profile decomposition of Bahouri-G\'erard
type \cite{BG}. Actually we do not need its full power, as we can
freeze scaling and space positions of the profiles thanks to the
radial symmetry and the regularity room of our problem. Hence the
setting is essentially the same as the Klein-Gordon case \cite{NS}.
\begin{lem} \label{free prof}
For any bounded sequence $(f_n,g_n)$ in $H^1_r\times L^2_r$, there
is a subsequence $(f_n',g_n')$, $\bar{J}\in \N\cup \{\infty\}$, a
bounded sequence $\{\lp f^j,\lp g^j\}_{1\le j< \bar{J}}$ in
$H^1_r\times L^2_r$, and sequences $\{t_n^j\}_{n\in\N,1\le j<
\bar{J}}\subset\R$, such that the following holds. For any $0\leq
j\leq J<\bar{J}$, let \EQ{
 \pt \cU_n(t):=e^{it\LR D}f_n', \pq \cN_n(t):=e^{it\al D}g_n',
 \pr \lp U_n^j(t):=e^{i(t-t_n^j)\LR D}\lp f^j, \pq \lp N_n^j(t):=e^{i(t-t_n^j)\al D}\lp g^j,
 \pr \pe \cU_n := \cU_n - \sum_{j=1}^J \lp U_n^j, \pq \pe \cN_n := \cN_n - \sum_{j=1}^J \lp N_n^j.}
Then for any $j,k\in\{1 \ldots J\}$, we have
$t_\I^j:=\lim_{n\to\I} t_n^j \in \{0,\pm\I\}$,
\EQ{ \label{separation}
  j\not=k\implies \lim_{n\to\I}|t_n^j-t_n^k|=\I,}
\EQ{ \label{weak conv}
  \pt (\pe \cU_n,\pe \cN_n)(t_n^j)\to 0\ \text{weakly in $H^1\times L^2$} \text{as } n\to \I,
  \pr (\pe \cU_n,\pe \cN_n)(0)\to 0\ \text{weakly in $H^1\times L^2$} \text{as } n\to \I,}
and for $\forall\delta>0$,
\EQ{ \label{smallness}
 \lim_{J\to \bar{J}}\limsup_{n\to\I}
 [\|\pe \cU_n\|_{L^\I_t B^{-1/2-\de}_{\I,2}} + \|\pe \cN_n\|_{L^\I_t(\dot B^{-3/2-\de}_{\I,2}+\dot B^{-3/2+\de}_{\I,2})}] =0.}
\end{lem}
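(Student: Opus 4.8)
\textbf{Proof proposal for Lemma \ref{free prof}.} The plan is to reduce the statement, after a routine subsequence argument, to the already-established Klein-Gordon profile decomposition of Bahouri-G\'erard type in the radial setting, as carried out in \cite{NS}. First I would apply the standard Bahouri-G\'erard extraction procedure to the two free evolutions $e^{it\LR D}f_n$ and $e^{it\al D}g_n$ separately: because we are in the radial case, the only concentration parameters that can appear are the time translations $t_n^j$ (scaling and spatial shifts can be frozen, as the excerpt remarks, using the radial symmetry together with the regularity room provided by working in $H^1\times L^2$ rather than at the critical regularity). An inductive-on-$J$ extraction selects, at each step, a profile $(\lp f^j,\lp g^j)$ as a weak limit of $(\pe{\cU}_n,\pe{\cN}_n)(t_n^j)$ along a diagonal subsequence, chosen so that the remainder has asymptotically maximal ``defect'' removed; passing to the diagonal subsequence over all $j$ gives the final $(f_n',g_n')$.

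The orthogonality \eqref{separation} is then built into the construction: if $\lim_n|t_n^j - t_n^k|<\I$ along a subsequence for some $j\neq k$, one can absorb one profile into the other, contradicting the maximality in the extraction step; and the limits $t_\I^j\in\{0,\pm\I\}$ follow because, after passing to a further subsequence, $t_n^j$ converges in the compactified line, and any finite nonzero limit can be transferred into the profile by replacing $\lp f^j$ with $e^{it_\I^j\LR D}\lp f^j$ (similarly for $\lp g^j$), so WLOG the limit is $0$ or $\pm\I$. The weak convergence statements in \eqref{weak conv} at $t=t_n^j$ are immediate from the definition of the profiles as weak limits plus the pairwise orthogonality (which makes the cross terms $\lp U_n^k(t_n^j)$ for $k\neq j$ go to zero weakly), and the version at $t=0$ follows by applying the unitary $e^{-it_n^j\LR D}$ and using $t_\I^j\in\{0,\pm\I\}$: when $t_\I^j=0$ it is the same statement, when $t_\I^j=\pm\I$ the relevant piece tends to $0$ weakly by the dispersive decay of the free flow tested against a fixed $H^1\times L^2$ element.

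The one genuinely non-generic point — and the step I expect to be the main obstacle — is the smallness \eqref{smallness}, which is stated not in a Strichartz norm but in the low-regularity Besov spaces $L^\I_t B^{-1/2-\de}_{\I,2}$ for $\cU$ and $L^\I_t(\dot B^{-3/2-\de}_{\I,2}+\dot B^{-3/2+\de}_{\I,2})$ for $\cN$. The point of using these particular spaces is that (i) they are weak enough that a nontrivial defect in them forces, by the usual Bahouri-G\'erard dichotomy, the extraction of a further nonzero profile, so the limit over $J\to\bar J$ must vanish; and (ii) at the same time the free evolution maps $H^1_r$ (resp.\ $L^2_r$) boundedly into $L^\I_t$ of these Besov spaces, using the radial-improved dispersive/Strichartz estimates from \cite{GNW} — the positive-$\de$ room and the Besov summability exponent $2$ are exactly what is needed to accommodate the two different dispersion relations $\LR D$ and $\al D$ with $\al\neq 1$ and to kill the logarithmic endpoint losses. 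Concretely, I would establish a ``vanishing'' lemma: if $(h_n)$ is bounded in $H^1_r$ and $e^{it\LR D}h_n\to 0$ in $L^\I_t B^{-1/2-\de}_{\I,2}$, then no further profile can be extracted; contrapositively, as long as the remainder is not small in this norm there is a profile carrying a fixed amount of the $H^1$-norm, and since the profiles are asymptotically orthogonal in $H^1\times L^2$ (a Pythagorean expansion that also needs to be recorded, via \eqref{separation} and the dispersive decay) the process terminates or the remainder's Besov norm tends to $0$ as $J\to\bar J$. The sum over $j$ of the squared profile norms being bounded by $\limsup_n(\|f_n\|_{H^1}^2+\|g_n\|_{L^2}^2)$ closes the induction. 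The $\dot B^{-3/2\pm\de}_{\I,2}$ sum for $\cN$ is handled the same way, splitting high and low frequencies of $g_n$ to exploit whichever of the two exponents is favourable, which is why the sum of two Besov spaces appears.
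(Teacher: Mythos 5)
The paper offers no self-contained proof of Lemma~\ref{free prof}: it states it as a recollection of the Bahouri--G\'erard machinery, citing \cite{BG} and remarking that the radial/subcritical setting puts it ``essentially the same as the Klein-Gordon case'' in \cite{NS}. Your reconstruction of the argument lines up with what those references would give, and the main ideas are all present (inductive extraction of time-translation profiles, absorbing a finite limit $t_\I^j$ into the profile, orthogonality of the $t_n^j$'s, the Pythagorean decoupling of the $H^1\times L^2$ norms, and an inverse-Strichartz/vanishing dichotomy producing the smallness in weak Besov norms).

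A few points are imprecise, though none are fatal to the overall strategy.  First, you open by saying you would apply the extraction ``to the two free evolutions $e^{it\LR D}f_n$ and $e^{it\al D}g_n$ separately,'' but the lemma demands a single time sequence $t_n^j$ shared by both components; your subsequent description (extracting $(\lp f^j,\lp g^j)$ jointly as the weak limit of $(\pe\cU_n,\pe\cN_n)(t_n^j)$) is the correct joint extraction, so the word ``separately'' is just a slip — but if taken at face value it would require an additional reconciliation step merging two independent time sequences, which you do not describe.  Second, you attribute the $L^\I_t$ boundedness of the free flow into $B^{-1/2-\de}_{\I,2}$ (resp.\ $\dot B^{-3/2\mp\de}_{\I,2}$) to radial-improved Strichartz estimates from \cite{GNW}. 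In fact that bound is nothing but unitarity of $e^{it\LR D}$ (resp.\ $e^{it\al D}$) on $H^1$ (resp.\ $L^2$) combined with the Bernstein/Sobolev embeddings $H^1\subset B^{-1/2}_{\I,2}$ and $L^2\subset\dot B^{-3/2}_{\I,2}$; the radial-improved Strichartz estimates enter only later, when one interpolates the Besov smallness with full-strength Strichartz norms (as the paper's Remark 2 after the lemma notes), not in establishing the $L^\I_t$ bound itself.  Third, the sum space $\dot B^{-3/2-\de}_{\I,2}+\dot B^{-3/2+\de}_{\I,2}$ for $\cN$ is there because $\cN$ lives only in $L^2$ (no $H^1$-type regularity room at either end of the frequency spectrum), so one uses $-\de$ to get summability at high frequencies and $+\de$ at low frequencies; it is not really about the mismatch $\al\neq 1$ of the dispersion relations.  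With these small corrections the proposal matches the intended argument.
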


\begin{rem}
1) \eqref{separation}--\eqref{weak conv} implies the linear orthogonality
\EQ{
 \pt \lim_{n\to\I}\left(\|\cU_n(0)\|_{H^1}^2-\sum_{j=1}^J\|\lp U_n^j(0)\|_{H^1}^2-\|\pe \cU_n(0)\|_{H^1}^2\right) = 0,
 \pr \lim_{n\to\I}\left(\|\cN_n(0)\|_{L^2}^2-\sum_{j=1}^J\|\lp N_n^j(0)\|_{L^2}^2-\|\pe \cN_n(0)\|_{L^2}^2\right) = 0,}
as well as the nonlinear orthogonality
\EQ{ \label{linear orth}
 \pt \lim_{n\to\I}\left( K_i(\cU_n(0))-\sum_{j=1}^J K_i(\lp U_n^j(0))-K_i(\pe \cU_n(0))\right)=0, i=0,2,
 \pr \lim_{n\to\I}\left( E(\cU_n(0),\cN_n(0))-\sum_{j=1}^J E(\lp U_n^j(0),\lp N_n^j(0))-E(\pe \cU_n(0),\pe \cN_n(0))\right)=0.}
The same orthogonality holds also along $t=t_n^j$ instead of $t=0$.

2) The norms in \eqref{smallness} are related to the Sobolev embedding $L^2\subset \dot B^{-3/2}_\I$. Interpolation with the Strichartz estimate extends the smallness to any Strichartz norms as far as the exponents are not sharp either in $L^p$ or in regularity (including the low frequency of $\cN$).
\end{rem}

We call such a sequence of free solutions $\{(\lp U_n^j,\lp
N_n^j)\}_{n\in\N}$ a {\it free concentrating wave}. Now we introduce
the nonlinear profile associated to a free concentrating wave \EQ{
 (\lp{U}_n(t),\lp{N}_n(t))=U(t-t_n)(\lp{f},\lp{g}), \pq t_\I=\lim_{n\to\I}t_n\in\{0,\pm\I\},}
where $U(t)=e^{it\LR D}\oplus e^{it\al D}$ denotes the free propagator.
With it, we associate the {\it nonlinear profile} $(\np{U},\np{N})$,
defined as the solution of the Klein-Gordon-Zakharov system satisfying \EQ{
 (\cU,\cN)=U(t)(\lp{f},\lp{g})-i\int_{-t_\I}^t U(t-s)(\LR{D}^{-1}(nu),\al D|u|^2)(s)ds,}
which is obtained  by solving the initial data problem (if $t_\I=0$)
or by solving the final data problem (if $t_\I=\pm\I$).

We call $(\np{U}_n(t),\np{N}_n(t)):=(\np{U}(t-t_n),\np{N}(t-t_n))$
the {\it nonlinear concentrating wave} associated with
$(\lp{U}_n(t),\lp{N}_n(t))$. By the above construction we have
\EQ{
\|(\lp{U}_n,\lp{N}_n)(0)-(\np{U}_n,\np{N}_n)(0)\|_{H^1\times L^2}
 =\|U(-t_n)(\lp{f},\lp{g})-(\np{U},\np{N})(-t_n)\|_{H^1\times L^2}\to 0.}

Given a sequence of solutions to the Klein-Gordon-Zakharov system with bounded
initial data, we can apply the free profile decomposition Lemma
\ref{free prof} to the sequence of initial data, and associate a
nonlinear profile with each free concentrating wave. If all
nonlinear profiles are scattering and the remainder is small enough,
then we can conclude that the original sequence of nonlinear
solutions is also scattering with a global Strichartz bound. More
precisely, we have
\begin{lem}\label{NL profile}
For each free concentrating wave $(\lp U_n^j,\lp N_n^j)$ in Lemma
\ref{free prof}, let $(\np U_n^j,\np N_n^j)$ be the associated
nonlinear concentrating wave. Let $(\cU_n, \cN_n)$ be the
sequence of nonlinear solutions with $(\cU_n,
\cN_n)(0)=(f_n,g_n)$. If $\|(\np
U_n^j,\np N_n^j)\|_{S(0,\I)}<\I$ for all $j< \bar J$, then
\EQ{\limsup_{n\to\I}\|(\cU_n,\cN_n)\|_{S(0,\I)}<\I.}
\end{lem}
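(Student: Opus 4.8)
The plan is to run the standard nonlinear-profile stability argument of Bahouri--G\'erard and Kenig--Merle, adapted to the first-order system \eqref{eq:KGZ2} after the normal form transform, treating the non-integrated (bilinear, non-Duhamel) terms with the smallness from \eqref{smallness}. First I would fix $J<\bar J$ and, using the energy/$K_i$ orthogonality in the Remark after Lemma \ref{free prof}, observe that for all large $n$ the data $(\cU_n(0),\cN_n(0))$ and each profile datum $(\lp f^j,\lp g^j)$ lie in $\K^+(a)$ for some $a<J(Q)$ (up to a small loss); in particular Corollary \ref{cor:cor} gives $E\sim H^1\times L^2$-norms, so all quantities are uniformly bounded and each nonlinear profile $(\np U^j,\np N^j)$ is a global solution staying in $\K^+$. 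By hypothesis each has finite $S(0,\I)$ norm, and by the linear orthogonality the sum $\sum_{j=1}^J \|(\np U_n^j,\np N_n^j)\|_{S}^2$ is bounded uniformly in $n$ and $J$ (the tail contributes negligibly once $\bar J=\I$, since only finitely many profiles can carry more than a fixed fraction of the energy).

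Next I would build the approximate solution $(\cV_n,\cW_n):=\sum_{j=1}^J(\np U_n^j,\np N_n^j) + (\pe\cU_n,\pe\cN_n)$ and estimate its error in solving \eqref{eq:KGZ2}. The error splits into: (i) cross terms $\np U_n^j \cdot \np N_n^k$ and $\np U_n^j\np U_n^k$ with $j\neq k$, which vanish as $n\to\I$ by the time separation \eqref{separation} (via a standard dispersive/orthogonality estimate in the relevant Strichartz norms); (ii) interactions of the finite sum with the remainder $(\pe\cU_n,\pe\cN_n)$, and the self-interaction of the remainder; for these the point is that every nonlinear term in \eqref{eq:KGZ2} has a factor $\LR D^{-1}$ or is of the type where the remainder enters via a non-sharp Strichartz/Besov norm, so \eqref{smallness} together with the uniform $S$-bound on the $\np U_n^j$ makes (ii) arbitrarily small for $J$ close to $\bar J$ and $n$ large (interpolation as in part 2) of the Remark). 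One must be careful with the two genuinely non-Duhamel bilinear terms coming from the normal-form reduction: these should be handled by the same bilinear estimates used in \cite{GNW}, again exploiting that one input always sits in a non-endpoint norm.

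Finally I would invoke the perturbation (long-time stability) lemma for the Klein-Gordon-Zakharov system --- the analogue used in \cite{GNW,GNW2} --- which states that if $(\cV_n,\cW_n)$ solves \eqref{eq:KGZ2} up to an error that is small in the appropriate dual-Strichartz norm, has bounded $S$-norm, and matches the data of $(\cU_n,\cN_n)$ at $t=0$ up to something tending to $0$ in $H^1\times L^2$, then $(\cU_n,\cN_n)$ exists on $(0,\I)$ with $\|(\cU_n,\cN_n)\|_{S(0,\I)}$ bounded; this yields $\limsup_n\|(\cU_n,\cN_n)\|_{S(0,\I)}<\I$. The main obstacle I anticipate is exactly the bookkeeping of the many distinct interaction types in (ii) --- low versus high frequency of $\cN$, the four quadratic combinations of $\cU,\bar\cU,\cN,\bar\cN$, and especially the non-time-integrated terms --- making sure each one lands in a norm controlled by \eqref{smallness} rather than a sharp endpoint; this is where the ``regularity room'' mentioned in the text (the extra $\LR D^{-1}$, and the radial-improved Strichartz estimates) must be used systematically.
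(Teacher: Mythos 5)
Your proposal follows essentially the same route as the paper: construct the approximate solution $\sum_{j\le J}(\np U_n^j,\np N_n^j)+(\pe\cU_n,\pe\cN_n)$, show the error terms vanish (cross terms via time-separation, remainder terms via the non-sharp Besov smallness and the interpolated nonlinear estimates of Lemma~\ref{B-Q-T}), and close with the long-time stability lemma (Lemma~\ref{Stability} and Lemma~\ref{per}). You also correctly flag the delicate point---the non-Duhamel bilinear terms coming from the normal form and the need to land every remainder interaction in a non-endpoint norm controlled by \eqref{smallness}---which is precisely the technical content of Section~4.2 that the paper develops before invoking this argument.
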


To prove Lemma \ref{NL profile}, we need some global stability. In
the next subsection, we will refine the normal form reduction and
the nonlinear estimates that was used in \cite{GNW}, and then prove
Lemma \ref{NL profile} and Lemma \ref{lem:crit}.


\subsection{Nonlinear estimates with small non-sharp norms}
In order to obtain the nonlinear profile decomposition, we need that
the non-sharp smallness \eqref{smallness} is sufficient to reduce
the nonlinear interactions globally. The idea is to use
interpolation, thus we need to do more refined estimates than in
\cite{GNW}, more precisely, to avoid using the sharp (or endpoint) norms with $L^2_t$ or $L^\I_t$.

\subsubsection{Modifying the nonresonant part}
Following the idea of \cite{GNW2}, we modify the resonance decomposition in \cite{GNW} of the bilinear interactions $nu$ and
$|u|^2$ as follows. Let $u=\sum_{k\in\Z} P_ku$ be the standard
homogeneous Littlewood-Paley decomposition such that $\supp\F P_k
u\subset\{2^{k-1}<|\x|<2^{k+1}\}$. For a parameter $\be\ge
k_\al+|\log_2c_\al|+|\log_2\delta_\al|$, where $k_\al$, $c_\al$ and
$\delta_\al$ were given in \cite{GNW} such that the resonance
disappeared for $$\sum_{\substack{|2^k-c_\alpha|>  \delta_\alpha,\\
k\in\Z}}P_k\cN P_{\leq k-k_\alpha}\cU \text{\ \ and\ \ }\sum_{\substack{|2^k-c_\alpha|>  \delta_\alpha,\\
k\in\Z}}P_k\cU P_{\leq k-k_\alpha}\cU.$$
Let \EQ{
 \pt XL:=\{(j,k)\in\Z^2 \mid j\ge \max(k+k_\al,\be) \},
 \pr RL:=\{(j,k)\in\Z^2 \mid |j|<\be \tand k\leq \max(j-k_\al, -\be)\},
 \pr LL:=\{(j,k)\in\Z^2 \mid \max(j,k)\le -\be\},
 \pr LH:=\{(j,k)\in\Z^2 \mid k> \max(j-k_\al, -\be)\},
 \pr HH:=\{(j,k)\in\Z^2 \mid |j-k|<k_\al \tand \max(j,k)\ge\be\},
 \pr RR:=\{(j,k)\in\Z^2 \mid \max(j,k)<\be\},}
and $LX:=\{(k,j)\mid (j,k)\in XL\}$. Then
\EQ{
 \Z^2=(XL \cup LL) \cup (RL \cup LH) = (XL\cup LX)\cup(HH\cup RR),}
where all the unions are disjoint. For any set $A\subset\Z^2$, and any functions $f(x),g(x)$, we denote the bilinear frequency cut-off to $A$ by
\EQ{
 (fg)_A=\F^{-1}\int\cP_A\hat f(\x-\y)\hat g(\y)d\y:=\sum_{(j,k)\in A}(P_jf)(P_kg).}

For the nonlinear term $nu=\cN\cU/4+\bar{\cN}\cU/4+
 \cN\bar{\cU}/4+\bar{\cN}\bar{\cU}/4$, we apply the time integration by parts on $XL\cup LL$, where the phase factors
\begin{align*}
& \om_1=-\LR\x+\al|\x-\y|+\LR\y,
& \om_2=-\LR\x-\al|\x-\y|+\LR\y,\\
& \om_3=-\LR\x+\al|\x-\y|-\LR\y,
& \om_4=-\LR\x-\al|\x-\y|-\LR\y.
\end{align*}
are estimated \EQ{ \om_1,\om_2\sim_\alpha |\x-\y|\text{\ \ and\ \ }
\om_3,\om_4\sim_\alpha \LR\xi\text{ \ in\  }XL\cup LL,}
both of which are gained in the bilinear
operators \EQ{
 \Om_i(f,g):=\F^{-1}\int \cP_{XL\cup LL}\frac{\hat f(\x-\y)\hat g(\y)}{\om_i}d\y,\ i=1,2,3,4.}
For the nonlinear term $u\bar{u}=\cU\bar{\cU}/4+\bar{\cU}\cU/4+\cU^2/4+\bar{\cU}^2/4$, we integrate by parts on $XL\cup LX$. Then we get a bilinear operator of the form
\begin{align}
\tilde\Om_i(f,g)&:=\F^{-1}\int \cP_{XL\cup LX}\frac{\hat f(\x-\y)\hat{\bar
g}(\y)}{\tilde{\om}_i}d\y,
\end{align}
where
\begin{align*}
& \tilde{\om}_1=-\al|\x|+\LR{\x-\y}-\LR\y,
& \tilde{\om}_2=-\al|\x|-\LR{\x-\y}+\LR\y,\\
& \tilde{\om}_3=-\al|\x|+\LR{\x-\y}+\LR\y,
& \tilde{\om}_4=-\al|\x|-\LR{\x-\y}-\LR\y.
\end{align*}
Since $\om_j$ and $\ti\om_j$ are in the dual relation with the correspondence $\x \mapsto \y-\x$, we have \EQ{ \tilde{\om}_1,\tilde{\om}_2\sim_\alpha |\x|\text{\ \ and\ \ }
\tilde{\om}_3,\tilde{\om}_4\sim_\alpha \LR{\x-\y}\text{ \ in\  }XL\cup LL.}

In order to simplify the presentation, we assume that $\alpha<1$\footnote{This is the physical case in plasma} and the nonlinear terms in the first and second equation of \eqref{eq:KGZ2} are $\cN\cU$ and $\cU\bar\cU$ respectively. For other cases, the proof is almost the same.

After this modification of the normal form, we can rewrite the
integral equation for \eqref{eq:KGZ2} as follows. Let \EQ{
 \vec \cU:=(\cU,\cN),
 \pq \vec \cU^0:=U(t)\vec \cU(0)=(e^{it\LR D}\cU(0),e^{it\al D}\cN(0)).}
For the fixed free solution $\vec \cU^0$, the iteration $\vec
\cU'\mapsto \vec \cU$ is given by \EQ{
 \vec \cU=\vec \cU^0+U(t)B(\vec \cU(0),\vec \cU(0))-B(\vec \cU',\vec \cU')-Q(\vec \cU',\vec \cU')-T(\vec \cU',\vec \cU',\vec \cU'),}
where the bilinear forms $B,Q$ and the trilinear form $T$ are
defined by
\begin{align*}
B(\vec \cU_1,\vec \cU_2):=&(\Om(\cN_1,\cU_2),D\ti\Om(\cU_1, \cU_2)),\\
Q(\vec \cU_1,\vec \cU_2):=&i\int_0^t
U(t-s)(\LR D^{-1}(\cN_1\cU_2)_{LH\cup RL},D(\cU_1\ba \cU_2)_{HH\cup RR})(s)ds,\\
T(\vec \cU_1,\vec \cU_2,\vec
\cU_3):=&i\int_0^tU(t-s)(\al \LR D^{-1}\Om(D(\cU_1\ba \cU_2),\cU_3)+ \LR D^{-1}\Om(\cN_1, \LR D^{-1}(\cN_2\cU_3)),\\
&\ \ \ \ \ \ \ \ \ \ \ \ \ \ \ \ \ \ \ \ D\ti\Om(\cU_1, \LR
D^{-1}(\cN_2\cU_3))(s))ds.
\end{align*}
where $\Om=\Om_1$ and $\ti\Om=\ti\Om_1$.
For brevity, we denote
\begin{align*}
NL(\vec \cU_1,\vec \cU_2,\vec \cU_3):=&B(\vec \cU_1,\vec
\cU_2)+Q(\vec \cU_1,\vec \cU_2)+T(\vec \cU_1,\vec \cU_2,\vec
\cU_3), \ \  NL(\vec \cU):=NL(\vec \cU,\vec\cU,\vec\cU), \\
B(\vec \cU):=&B(\vec \cU,\vec\cU),\ \ Q(\vec \cU):=Q(\vec
\cU,\vec\cU),\ \ T(\vec \cU):=T(\vec \cU,\vec\cU,\vec\cU).
\end{align*}

We can estimate each term in the Duhamel formula using some powers
of Strichartz norms with non-sharp exponents. For brevity of
H\"older-type estimates, we denote the space-time norms by \EQ{
 \pt (b,d,s):=L^{1/b}_t \dot B^s_{1/d,2},
 \pr (b,d\pm \e,s)_+:=(b,d+\e,s)+(b,d-\e,s),
 \pr (b,d\pm \e,s)_\cap:=(b,d+\e,s)\cap(b,d-\e,s).}
Using the above notation, we introduce nearly full sets of the radial Strichartz norms for the Klein-Gordon and the wave equations. Fix small numbers
\EQ{
 0<\ka \ll \e \ll 1,}
and let
\EQ{ \label{def S}
 \pt K:=[(0,\fr 12,0|1)\cap(\fr 12,\fr 3{10}-\fr\ka3,\fr 25-\ka|\fr
7{10}+\fr\ka3)],
 \pr W:=(0,\fr 12,0)\cap(\fr 12,\fr 14-\fr\ka3,-\fr 14-\ka),
 \pr S:=K\times W.}
where $\|\cU\|_{(b,d,s_1|s_2)}:=\|P_{<0}\cU\|_{(b,d,s_1)}+\|P_{\geq0}\cU\|_{(b,d,s_2)}$.
Also we denote the smallness in \eqref{smallness} by using \EQ{
 \|\cU\|_X:=\|\cU\|_{L^\I_t(B^{-\fr 12-\de}_{\I,2})}, \pq \|\cN\|_Y:=\|\cN\|_{L^\I_t(\dot{B}^{-\fr 32-\de}_{\I,2}+\dot{B}^{-\fr 32+\de}_{\I,2})}, \pq Z:=X\times Y.}

In order to control the nonlinear terms by interpolation between $S$ and $Z$,
 we will choose $(b,d,s)$ for $\cU$ and $\cN$ respectively to be $H^s$ admissible with $0<s<1$ and $L^2$ admissible for radial functions.
Moreover, we will choose $b<1/2$ and $(b,d)\not=(0,1/2)$.
Besides that, we will use the sum space\footnote{This is because $\cN(0)\in L^2$ while $\cU(0)\in H^1=L^2\cap\dot H^1$.} with small $\e>0$ for $\cN$
 and the intersection for $\cU$, so that we can dispose of very low or high frequencies, and sum over the dyadic decomposition without any difficulty.

\subsubsection{Bare bilinear terms}
First consider the bilinear terms which do not contain the time integration, namely the boundary term in the transform.
\begin{lem}
(a) There exists $\theta>0$ such that for any $\cN$ and $\cU$, we have
\begin{align}
 \|\LR{D}^{-1}\Om(\cN,\cU)\|_{K} \lec& 2^{-\te \be}\|\cU\|_{K}^{1-\te}\|\cN\|_{W}^{1-\te}\|\cU\|_X^\te\|\cN\|_Y^\te .
\end{align}

(b) There exists $\theta>0$ such that for any $\cU$ and $\cU'$, we have
\begin{align}
 \|D\ti\Om(\cU,\cU')\|_{W} \lec& 2^{-\te \be}\|\cU\|_{K}^{1-\te}\|\cU'\|_{K}^{1-\te}\|\cU\|_X^{\te}\|\cU'\|_X^\te.
\end{align}
\end{lem}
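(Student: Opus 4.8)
The plan is to estimate the bilinear operators $\LR D^{-1}\Om(\cN,\cU)$ and $D\ti\Om(\cU,\cU')$ by combining three ingredients: (i) the frequency gain from the denominators $\om_i$, $\ti\om_i$, which on the modulation region $XL\cup LL$ (resp.\ $XL\cup LX$) behave like $\om_1\sim_\alpha|\x-\y|$ (the low frequency of $\cN$, i.e.\ the $\cU$-output frequency relative to the $\cN$-input) resp.\ $\ti\om_1\sim_\alpha|\x|$ (the output frequency of the wave equation); (ii) the off-diagonal decay $2^{-\te\be}$ coming from the constraint $j\ge\max(k+k_\alpha,\be)$ in $XL$ and $\max(j,k)\le-\be$ in $LL$, which forces a large frequency separation or a small total frequency; and (iii) interpolation between the full Strichartz package $S=K\times W$ and the weak non-sharp norms $Z=X\times Y$. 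Concretely, I would first fix $\te>0$ small, write each factor as a $(1-\te)$-power of a sharp-exponent Strichartz norm times a $\te$-power of the corresponding $Z$-norm via H\"older in the dyadic pieces, then bound the resulting product of mixed-exponent norms by a discrete Schur-type summation over $(j,k)\in XL\cup LL$ (resp.\ $XL\cup LX$).

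The concrete execution for part (a): decompose $\LR D^{-1}\Om(\cN,\cU)=\sum_{(j,k)\in XL\cup LL}\LR D^{-1}\Om_1(P_j\cN,P_k\cU)$, where the output is at frequency $\sim 2^j$. On $XL$ one has $j\ge k+k_\alpha$, so $\LR\x\sim 2^j$, $\om_1\sim_\alpha 2^k$ (the smaller frequency, which is the relevant one after the normal form), and $\LR D^{-1}$ contributes $2^{-j}$; on $LL$ both $j,k\le-\be$ and $\om_1\sim_\alpha\max(2^j,2^k)$. The product $\LR D^{-1}\cdot\om_1^{-1}$ thus produces, in every case, a net power $2^{-\min(j,k)}\cdot 2^{-\max(j,k)}$-type gain together with the hard cutoff $2^{-\be}$ (from $j\ge\be$ on $XL$, or $\max(j,k)\le-\be$ on $LL$) to some small positive power $\te$. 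For each fixed dyadic pair I would interpolate: the sharp part of the estimate uses the bilinear radial-improved Strichartz estimates from \cite{GNW} (placing $P_j\cN$ in a wave-admissible norm from $W$ and $P_k\cU$ in a Klein-Gordon-admissible norm from $K$, using that we chose $b<1/2$ and $(b,d)\ne(0,1/2)$ so that summation over dyadic pieces is lossless), while the $\te$-fraction is absorbed into $\|\cU\|_X\|\cN\|_Y$, whose negative-regularity weights $2^{-j(1/2+\de)}$, $2^{-k(3/2\mp\de)}$ are precisely what convert the remaining geometric series into a convergent double sum yielding the overall $2^{-\te\be}$ factor. Part (b) is identical after the duality substitution $\x\mapsto\y-\x$ that exchanges $\om_i\leftrightarrow\ti\om_i$ and $XL\cup LL\leftrightarrow XL\cup LX$, so the roles of ``low $\cN$-frequency'' and ``output wave-frequency'' are swapped and the $D$ in front plays the role that $\LR D^{-1}$ played before, now multiplied against $\ti\om_1^{-1}\sim_\alpha 2^{-j}$ where $2^j$ is the output frequency.

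The main obstacle I anticipate is bookkeeping the interpolation so that \emph{every} factor simultaneously lands in an admissible Strichartz norm with non-sharp exponents — one must verify that the mixed exponents $(1-\te)(b,d,s)_{\mathrm{sharp}}+\te\cdot(\text{$Z$-exponent})$ are still admissible and, crucially, that one never needs the endpoints $L^2_t$ or $L^\I_t$ or the sharp regularity in any single term (the $|1$ vs.\ the other entry in the $(b,d,s_1|s_2)$ notation for $\cU$, and the sum-space $\pm\e$ slack for $\cN$, are designed for exactly this). A secondary technical point is handling the two geometrically distinct sub-regions $XL$ and $LL$ separately, since on $XL$ the gain is a genuine high-low frequency separation whereas on $LL$ it is smallness of the total frequency; both produce a factor $2^{-\te\be}$ but through different summations, and one must check the $LL$ contribution does not suffer from the low-frequency endpoint of the wave Strichartz estimates — this is where the sum space $\dot B^{-3/2+\de}_{\I,2}$ component of $Y$ (as opposed to only $\dot B^{-3/2-\de}_{\I,2}$) is used. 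Everything else is a routine, if lengthy, H\"older-and-Schur computation.
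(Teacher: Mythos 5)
Your high-level architecture matches the paper's: dyadic decomposition, Coifman--Meyer-type bilinear estimates on each piece (the paper cites \cite[Lemma 3.5]{GN}), interpolation between the sharp Strichartz package $S=K\times W$ and the non-sharp norms $Z=X\times Y$, and extraction of the $2^{-\te\be}$ factor from the constraints defining $XL$, $LL$, $LX$. Where the proposal goes wrong --- and it matters, since this is exactly where the paper spends all of its effort --- is in identifying \emph{how} the $\be$-decay arises in each sub-region, and you have the mechanism misidentified in every case.

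On $XL$ in part (a) the denominator is $\om_1\sim_\al|\x-\y|\sim 2^j$, the $\cN$-input frequency, which is the \emph{large} one ($\ge\be$); it is not $\sim 2^k$ as you wrote. If $\om_1$ were $\sim 2^k$, your claimed ``net power $2^{-\min(j,k)}\cdot 2^{-\max(j,k)}$ gain'' from $\LR D^{-1}\om_1^{-1}$ would read $2^{-j-k}$, which for $k\ll 0$ is a \emph{loss}, not a gain, and the summation over $k$ would diverge. With the correct count $\LR D^{-1}\om_1^{-1}\sim 2^{-2j}$ one has ample room, and the $2^{-\te\be}$ is then extracted by trading a small fraction of regularity on the high-frequency side, as in the paper's step $\lec 2^{-\be/10}\|\LR D^{-1}\cN_j\|_{(0,1/5\pm\e,1/10)_+}\|\cU_k\|_{(0,3/10\pm\e,0)_\cap}$. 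On $LL$ the situation is the opposite: $\om_1\sim|\x-\y|\lec 2^{-\be}$ is \emph{small}, so $\om_1^{-1}$ is actually a loss; the $\be$-decay there comes from giving up \emph{negative} regularity, exploiting $j\le -\be$, and this is also why the sum-space $\pm\de$ slack in $Y$ is indispensable (a point you do note). For part (b) the normal-form denominator gives \emph{neither gain nor loss}: $D\cdot\ti\om_1^{-1}\sim|\x|\cdot|\x|^{-1}\sim 1$, and the $2^{-\te\be}$ comes solely from dropping positive regularity on the high-frequency $\cU_j$ with $j\ge\be$. So the slogan ``$D$ plays the role that $\LR D^{-1}$ played'' is not the right picture; the two cases are not dual in this bookkeeping sense. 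The ``routine H\"older-and-Schur computation'' you defer is precisely the step of choosing interpolation weights so the result still lands in admissible non-sharp norms after paying a small $\be$-power, and that cannot be carried out until the sign and size of these gains/losses is set right in each of $XL$, $LL$, $LX$.
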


\begin{proof}
(a) For $(j,k)\in XL$, we have only high frequencies. By the Coifman-Meyer-type bilinear estimate on dyadic pieces (see \cite[Lemma 3.5]{GN}), we have
\begin{align*}
\|\LR{D}^{-1}\Om(\cN_j,\cU_k)\|_{(0,\frac{1}{2},0)} &\lesssim \|\LR{D}\Om(\LR{D}^{-1}\cN_j,\cU_k)\|_{L^\I L^2}\\
&\lesssim \|\LR{D}^{-1}\cN_j\|_{(0,\fr 15\pm\e,0)_+}\|\cU_k\|_{(0,\fr 3{10}\pm\e,0)_\cap}\\
 &\lesssim 2^{-\be/10}\|\LR D^{-1}\cN_j\|_{(0,\fr 15\pm\e,\fr 1{10})_+}\|\cU_k\|_{(0,\fr 3{10}\pm\e,0)_\cap},
\end{align*}
and by  non-sharp Sobolev embedding
\begin{align*}
&\|\LR D^{-1}\cN_j\|_{(0,\fr 15\pm\e,\fr 1{10})_+}\lec \|\cN_j\|_{(0,\fr 14,-\fr 34\pm3\e)_+}
\lec\|\cN_j\|^{\fr 12}_{(0,\fr12,0)}\|\cN_j\|^{\fr 12}_{(0,0,-\fr 32\pm6\e )_+},\\
&\|\cU_k\|_{(0,\fr 3{10}\pm\e,0)_\cap}\lec \|\cU_k\|_{(0,\fr 25,\fr
3{10}\pm3\e)_\cap}
\lec\|\cU_k\|^{\fr45}_{(0,\fr12,0|1)}\|\cU_k\|^{\fr15}_{(0,0,-\fr
12-5\e)}.
\end{align*}
Similarly,  for $(j,k)\in LL$, we have only low frequencies and then
\begin{align*}
\|\LR{D}^{-1}\Om(\cN_j,\cU_k)\|_{(0,\frac{1}{2},0)} &\lesssim \|D^{-1}\cN_j\|_{(0,\fr 2{15}\pm\e,0)_+}\|\cU_k\|_{(0,\fr {11}{30}\pm\e,0)_\cap}\\
 &\lesssim 2^{-\be/10}\|D^{-1}\cN_j\|_{(0,\fr 2{15}\pm\e,-\fr 1{10})_+}\|\cU_k\|_{(0,\fr {11}{30}\pm\e,0)_\cap}.
\end{align*}
By non-sharp Sobolev embedding,
\begin{align*}
&\|D^{-1}\cN_j\|_{(0,\fr 2{15}\pm\e,-\fr 1{10})_+}\lec \|\cN_j\|_{(0,\fr 5{12},-\fr 14\pm3\e)_+}
\lec\|\cN_j\|^{\fr 56}_{(0,\fr12,0)}\|\cN_j\|^{\fr 16}_{(0,0,-\fr 32\pm18\e )_+},\\
&\|\cU_k\|_{(0,\fr {11}{30}\pm\e,0)_\cap}\lec \|\cU_k\|_{(0,\fr 25,\fr 1{10}\pm3\e)_\cap}
\lec\|\cU_k\|^{\fr45}_{(0,\fr12,0)}\|\cU_k\|^{\fr15}_{(0,0,-\fr 12-\e)}.
\end{align*}
 Thus we obtain, after summation over dyadic decomposition,
\EQ{
 \|\LR{D}^{-1}\Om(\cN,\cU)\|_{(0,\frac{1}{2},0|1)} \lec& 2^{-\te \be}\|\cU\|_{K}^{1-\te}\|\cN\|_{W}^{1-\te}
 \|\cU\|_X^\te\|\cN\|_Y^\te,}
for some small $\te>0$. Similarly, for $(j,k)\in XL$ we have, \EQ{
 &\|\LR{D}^{-1}\Om(\cN_j,\cU_k)\|_{(\fr 12,\fr 3{10}-\fr\ka3,\fr 7{10}+\fr\ka3)}\\
 \lesssim&\|\LR{D}\Om(\LR{D}^{-1}\cN_j,\cU_k)\|_{(\fr 12,\fr 3{10}-\fr\ka3,-\fr 3{10}+\fr\ka3)}\\
 \lesssim& \|D^{-1}\cN_j\|_{(\fr 14,\fr {7}{30}-\fr \ka 3\pm\e,-\fr1{10} -\ka)_+}\|\cU_k\|_{(\fr 14,\fr 1{15}\pm\e,-\fr15+\fr43 \ka)_\cap}
 \\
 \lesssim& 2^{-\be/5}\|\cN_j\|_{(\fr 14,\fr {7}{30}-\fr \ka 3\pm\e,-\fr9{10} -\ka)_+}\|\cU_k\|_{(\fr 14,\fr 1{15}\pm\e,-\fr15+\fr43 \ka)_\cap}}
 and
\begin{align*}
\|\cN_j\|_{(\fr 14,\fr {7}{30}-\fr \ka 3\pm\e,-\fr9{10} -\ka)_+}\lec& \|\cN_j\|_{(\fr 14,\fr {11}{40}-\fr \ka 6,-\fr{31}{40} -\fr\ka2\pm3\e)_+}
\lec \|\cN_j\|_{(\fr 14,\fr {11}{40}-\fr \ka 6,-\fr{17}{40} -\fr\ka2\pm3\e)_+}\\
\lec&(\|\cN_j\|^{\fr 38}_{(0,\fr12,0)}\|\cN_j\|^{\fr 58}_{(\fr 12,\fr 14-\fr\ka3,-\fr 14-\ka)})^{\fr45}\|\cN_j\|^{\fr 15}_{(0,0,-\fr 32\pm15\e )_+},
\end{align*}
\begin{align*}
\|\cU_k\|_{(\fr 14,\fr 1{15}\pm\e,-\fr15+\fr43 \ka)_\cap}\lec &\|\cU_k\|_{(\fr 14,\fr 14-\fr 1{6}\ka,\fr7{20}+\fr56 \ka\pm3\e)_\cap}
\lec\|\cU_k\|_{(\fr 14,\fr 14-\fr 1{6}\ka,\fr1{20}-\fr \ka2-3\e|\fr25+\fr \ka6-3\e)}\\
\lec&(\|\cU_k\|^{\fr57}_{(\fr 12,\fr 3{10}-\fr\ka3,\fr 25-\ka|\fr 7{10}+\fr\ka3)}
\|\cU_k\|^{\fr27}_{(0,\fr 12,0|1)})^{\fr7{10}}\|\cU_k\|^{\fr3{10}}_{(0,0,-\fr 12-10\e)};
\end{align*}
and for $(j,k)\in LL$ we have
\EQ{
 \pt\|\LR{D}^{-1}\Om(\cN_j,\cU_k)\|_{(\fr 12,\fr 3{10}-\fr\ka3,0)}
 \pn\lec \|D^{-1}\cN_j\|_{(\fr 14,\fr 1{15}-\fr \ka 3\pm\e,0)_+}\|\cU_k\|_{(\fr 14,\fr {7}{30}\pm\e,0)_\cap}
 \pr\pq\lec 2^{-\be/20}\|D^{-1}\cN_j\|_{(\fr 14,\fr 1{15}-\fr \ka 3\pm\e,-\frac{1}{20})_+}\|\cU_k\|_{(\fr 14,\fr {7}{30}\pm\e,0)_\cap} }
 and
 \begin{align*}
\|D^{-1}\cN_j\|_{(\fr 14,\fr 1{15}-\fr \ka 3\pm\e,-\frac{1}{20})_+}\lec& \|\cN_j\|_{(\fr 14,\fr {5}{24}-\fr \ka 6,-\fr{5}{8} -\fr\ka2\pm3\e)_+}\\
\lec&(\|\cN_j\|^{\fr 14}_{(0,\fr12,0)}\|\cN_j\|^{\fr 34}_{(\fr 12,\fr 14-\fr\ka3,-\fr 14-\ka)})^{\fr23}\|\cN_j\|^{\fr 13}_{(0,0,-\fr 32\pm9\e )_+},
\end{align*}
\begin{align*}
\|\cU_k\|_{(\fr 14,\fr {7}{30}\pm\e,0)_\cap}\lec& \|\cU_k\|_{(\fr 14,\fr {7}{30}\pm\e,-\frac{1}{20})_\cap}
\lec\|\cU_k\|_{(\fr 14,\fr {11}{40}-\fr \ka 6,\fr{3}{40} -\fr\ka2-3\e)}\\
\lec&(\|\cU_k\|^{\fr23}_{(\fr 12,\fr 3{10}-\fr\ka3,\fr 25-\ka)}
\|\cU_k\|^{\fr13}_{(0,\fr 12,0)})^{\fr34}\|\cU_k\|^{\fr14}_{(0,0,-\fr 12-12\e)}.
\end{align*}
Hence in either case we can control by non-sharp norms, so
\EQ{
 \|\LR{D}^{-1}\Om(\cN,\cU)\|_{K} \lec& 2^{-\te \be}\|\cU\|_{K}^{1-\te}\|\cN\|_{W}^{1-\te}\|\cU\|_X^\te\|\cN\|_Y^\te .}

(b) We may assume $(j,k)\in XL$, since the other case $LX$ is treated in the same way. Similarly to the above, we have
\EQ{
\|D\ti\Om(\cU_j,\cU'_k)\|_{L^\I L^2}
&\lec \|\cU_j\|_{(0,\fr 14,0)}\|\cU'_k\|_{(0,\fr 14,0)}\\
&\lec 2^{-\be/10} \|\cU_j\|_{(0,\fr 14,\frac{1}{10})}\|\cU'_k\|_{(0,\fr 14,0)}.}
Note that we have only high frequencies for $\cU$, we have
\begin{align*}
&\|\cU_j\|_{(0,\fr 14,\frac{1}{10})}
\lec
\|\cU_j\|^{\fr12}_{(0,\fr 12,1)}\|\cU_j\|^{\fr12}_{(0,0,-\fr 12-\e)},\\
&\|\cU'_k\|_{(0,\fr 14,0)}
\lec
\|\cU'_k\|^{\fr12}_{(0,\fr 12,0|1)}\|\cU'_k\|^{\fr12}_{(0,0,-\fr 12-\e)}.
\end{align*}
Hence
\EQ{
  \|D\ti\Om(\cU,\cU')\|_{L^\I L^2} \lec& 2^{-\te \be}\|\cU\|_{K}^{1-\te}\|\cU'\|_{K}^{1-\te}\|\cU\|_X^{\te}\|\cU'\|_X^\te.}
Similarly, we have
\EQ{
\|D\ti\Om(\cU_j,\cU'_k)\|_{(\fr 12,\fr 14-\fr\ka3,-\fr 14-\ka)}
&\lec\|\cU_j\|_{(\fr 14,\fr 18-\fr \ka3,-\frac18-\ka)}\|\cU'_k\|_{(\fr 14,\fr 18,-\fr18)}
\\&\lec 2^{-\be/10}\|\cU_j\|_{(\fr 14,\fr 18-\fr \ka3,-\frac1{40}-\ka)}\|\cU'_k\|_{(\fr 14,\fr 18,-\fr18)}}
and
\begin{align*}
&\|\cU_j\|_{(\fr 14,\fr 18-\fr \ka3,-\frac1{40}-\ka)}\lec \|\cU_j\|_{(\fr 14,\fr 3{20}-\fr \ka{6},\fr1{10}-\fr \ka2)}
\lec\|\cU_j\|^{\fr12}_{(\fr 12,\fr 3{10}-\fr\ka3,\fr 7{10}+\fr\ka3)}
\|\cU_j\|^{\fr12}_{(0,0,-\fr 12-2\ka)};\\
&\|\cU'_k\|_{(\fr 14,\fr 18,-\fr18)}\lec \|\cU'_k\|_{(\fr 14,\fr 3{20}-\fr \ka{6},-\fr1{20}-\fr \ka2)}
\lec\|\cU'_k\|^{\fr12}_{(\fr 12,\fr 3{10}-\fr\ka3,\fr 25-\ka|\fr 7{10}+\fr\ka3)}
\|\cU'_k\|^{\fr12}_{(0,0,-\fr 12-2\ka)};
\end{align*}
so \EQ{\|D\ti\Om(\cU,\cU')\|_{W} \lec& 2^{-\te \be}\|\cU\|_{K}^{1-\te}\|\cU'\|_{K}^{1-\te}\|\cU\|_X^{\te}\|\cU'\|_X^\te.}
Thus the proof is completed.
\end{proof}

\subsubsection{Duhamel bilinear terms}
Next we consider the remaining bilinear terms in the Duhamel form after the normal form transform.
Here we have to use the radial improvement of the Strichartz norms.
For brevity, we denote the integrals in the Duhamel formula by
\EQ{
 I_{\cU} f:=\int_0^t e^{i(t-s)\LR D}f(s)ds, \pq I_{\cN}f:=\int_0^t e^{i(t-s)\al D}f(s)ds.}

\begin{lem}
(a) There exists $\theta>0$ and $C(\be)>1$ such that for any $\cN$ and $\cU$, we have
\begin{align*}
\|I_{\cU}\LR D^{-1}(\cN\cU)_{LH}\|_{K}
 \le &C(\be) \|\cU\|_{K}^{1-\te}\|\cN\|_{W}^{1-\te} \|\cU\|_X^{\te}\|\cN\|_Y^{\te}
,\\
\|I_{\cU}\LR D^{-1}(\cN\cU)_{RL}\|_{K}
 \le& C(\be) \|\cU\|_{K}^{1-\te}\|\cN\|_{W}^{1-\te}\|\cU\|_X^{\te}\|\cN\|_Y^{\te}.
\end{align*}

(b) There exists $\theta>0$ and $C(\be)>1$ such that for any $\cU$ and $\cU'$, we have
\begin{align*}
\|I_{\cN}D(\cU\cU')_{HH}\|_{W} \le& C(\be)
\|\cU\|_{K}^{1-\te}\|\cU'\|_{K}^{1-\te}\|\cU\|_X^{\te}\|\cU'\|_X^{\te},\\
\|I_{\cN}(\cU\cU')_{RR}\|_{W} \le& C(\be)
\|\cU\|_{K}^{1-\te}\|\cU'\|_{K}^{1-\te}\|\cU\|_X^{\te}\|\cU'\|_X^{\te}.
\end{align*}
\end{lem}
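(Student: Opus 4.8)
The plan is to treat all four estimates by the same scheme used for the bare bilinear terms, with the time integration now absorbed by the inhomogeneous Strichartz inequality. First I would use the radial-improved Strichartz estimates for the propagators $e^{it\LR D}$ and $e^{it\al D}$ (as in \cite{GNW,GN}) to reduce $\|I_\cU f\|_K$ and $\|I_\cN f\|_W$ to bounds on $\|f\|$ in suitable \emph{dual} Strichartz norms, taken at interior points of the admissible segments so that no $L^1_t$ or $L^2_t$ endpoint and no sharp regularity is used, leaving room to interpolate against $Z$ afterwards. Then I insert the dyadic Littlewood-Paley decomposition, estimate each product $(P_j\cN)(P_k\cU)$ (resp.\ $(P_j\cU)(P_k\cU')$) on dyadic pieces by a Coifman-Meyer / H\"older-Strichartz estimate, apply the non-sharp Sobolev embeddings to trade a little integrability and regularity for the weak norms $X,Y$ exactly as in the preceding lemma, and finally sum over $(j,k)$ in the relevant region by H\"older in the dyadic indices together with the $\ell^2$ summability built into the Besov norms $K,W,X,Y$. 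The crucial structural remark is that $LH\cup RL$ and $HH\cup RR$ are precisely the frequency regions on which \emph{no} time integration by parts was performed in the normal-form step, so here there is no oscillatory gain and one only obtains a finite constant $C(\be)$ in place of the $2^{-\te\be}$ of the bare terms.

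For the regions $LH$ and $HH$ the summation closes thanks to genuine frequency gains combined with $\ell^2$ dyadic orthogonality. In $(\cN\cU)_{LH}$ the constraint $k>\max(j-k_\al,-\be)$ means that either $\cU$ dominates, so $\LR D^{-1}$ gains a full power of the high output frequency $\sim 2^k$, or $j\approx k$ are both high and the output can be low, so $\LR D^{-1}$ gains even more; in every subcase the $\ell^2_j$-summability of $\cN$ in $Y$ and of $\cU$ in $K$ makes the double series converge, and the spurious $2^{-\te\be}$ that appears is simply absorbed into $C(\be)$. The region $HH$, where $j\approx k\ges\be$, is analogous: the derivative $D$ on the output is absorbed by moving it onto the two $H^1$-regular high inputs through the high-frequency components of $K$ (which carry up to one derivative), and again the $\ell^2$ structure of the high-frequency Strichartz norm and of $X$ makes $\sum_{j\approx k\ges\be}$ converge. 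In both regions it is the radial improvement of the Strichartz norms that makes the exponents in $K$ and $W$ (with $b<1/2$ and $(b,d)\neq(0,1/2)$) affordable, and the interpolation producing the power $\te$ of $X,Y$ is verbatim as before.

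The genuinely $\be$-dependent contributions are $RL=\{|j|<\be,\ k\le\max(j-k_\al,-\be)\}$ and $RR=\{\max(j,k)<\be\}$: there every frequency involved is $\les 2^\be$, so no gain is available, and none is needed. A crude H\"older-Strichartz estimate --- putting $\cN,\cU$ (resp.\ $\cU,\cU'$) each into non-endpoint radial Strichartz norms controlled by $K$ and $W$ (and their $b=1/4$ interpolants), and using the $L^2$-admissible low-frequency range for $\cN$, which forces the sum space $(b,d\pm\e,s)_+$ --- already closes on each dyadic piece; summing the finitely many pieces with $|j|<\be$, $k<\be$ costs a fixed power $2^{c\be}$, which is exactly the origin of $C(\be)$, and one more interpolation against $X,Y$ supplies the exponent $\te$.

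The main obstacle is making the second and third steps mutually consistent: one must check that the radial-improved Strichartz range is wide enough to carry a Klein-Gordon factor and a wave factor \emph{simultaneously} at strictly non-sharp exponents, in particular near the low-frequency end where $\cN$ only lies in $L^2$ --- which is precisely why the $\pm\e$ sum space cannot be dispensed with --- and, in the $HH$ interaction, to balance the derivative $D$ on a possibly high output against the $H^1$-regularity of the two high inputs without touching any endpoint, which is what pins down the exponents chosen in $W$. Once this bookkeeping is in place, collecting the constants from the four regions yields all four inequalities with a single $C(\be)>1$ and some $\te>0$.
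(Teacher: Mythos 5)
Your plan matches the paper's proof: reduce $I_\cU$ and $I_\cN$ by inhomogeneous Strichartz to dual norms, estimate the dyadic products $(P_j\cN)(P_k\cU)$ and $(P_j\cU)(P_k\cU')$ by H\"older/Coifman--Meyer with non-sharp Sobolev embeddings, interpolate a small power against $Z=X\times Y$, and sum each region, with $C(\be)$ coming from the finitely many low-frequency dyadic pieces in $RL$ and $RR$. One small misattribution: the paper invokes the radial-improved dual Strichartz norm only for $RL$ and $RR$, whereas for $LH$ and $HH$ it notes explicitly that the ordinary non-radial $\dot H^s$-admissible Strichartz estimate already suffices on the dual side (the radial improvement enters there only through the input norms $K,W$).
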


\begin{proof}
In this proof we ignore the dependence of the constants on $\be$.

(a) For $(j,k)\in LH$, we have for $0\le s\le 1+2\e$,
\EQ{\label{dual}
 &\|\LR D^{-1}(\cN_j\cU_k)\|_{(1-2\e,\fr 12+2\e,s+2\e)}\\
 \lec &\|\cN_j\|_{(\fr 12-\e,\fr 14-\e\pm\fr {\e^2} 3,-\fr 14-4\e)_+}
  \|\LR D^{-1}\cU_k\|_{(\fr 12-\e,\fr 14+3\e\pm\fr {\e^2} 3,s+\fr 14+6\e)_\cap}
 \\
 \lec &\|\cN_j\|_{(\fr 12-\e,\fr 14-\e\pm\fr {\e^2} 3,-\fr 14-2\e)_+}
  \|\LR D^{-1}\cU_k\|_{(\fr 12-\e,\fr 14+3\e\pm\fr {\e^2} 3,\fr 54+8\e)_\cap},}
where in the second inequality we used that $k$ is bounded from below.
Since
\begin{align*}
\|\cN_j\|_{(\fr 12-\e,\fr 14-\e\pm\fr {\e^2} 3,-\fr 14-4\e)_+}\lec &\|\cN_j\|_{(\fr 12-\e,\fr 14-\fr\e2-\fr13 (1-2\e)\ka,-\fr 14-\fr 52\e-(1-2\e)\ka\pm\e^2)_+}\\
\lec&\|\cN_j\|^{1-2\e}_{(\fr 12,\fr 14-\fr\ka3,-\fr 14-\ka)}\|\cN_j\|^{2\e}_{(0,0,-\fr 32\pm \fr\e2 )_+},
\end{align*}
\begin{align*}
 \|\LR D^{-1}\cU_k\|_{(\fr 12-\e,\fr 14+3\e\pm\fr {\e^2} 3,\fr 54+8\e)_\cap}
\lec&  \|\cU_k\|_{(\fr 12-\e,\fr 14+3\e\pm\fr {\e^2} 3,\fr 14+8\e)_\cap}\\
\lec&  \|\cU_k\|_{(\fr 12-\e,\fr 3{10}-\fr35\e-\fr13 (1-2\e)\ka,\fr 25-\fr{14}5\e-(1-2\e)\ka\pm\e^2)_\cap}\\
\lec&  \|\cU_k\|_{(\fr 12-\e,\fr 3{10}-\fr35\e-\fr13 (1-2\e)\ka,\fr 7{10}-\fr{12}5\e+\fr13(1-2\e)\ka-\e^2)}\\
\lec&\|\cU_k\|^{1-2\e}_{(\fr 12,\fr 3{10}-\fr\ka3,\fr 7{10}+\fr\ka3)}
\|\cU_k\|^{2\e}_{(0,0,-\fr 12-\fr\e2)};
\end{align*}
and the left hand side of \eqref{dual} is $\dot H^s$-admissible norm for the Strichartz estimate (without the radial symmetry), we obtain the full Strichartz
bound in $H^1$.

For $(j,k)\in RL$, we only have the low frequencies of $\cU$ and we may neglect the regularity of $\cN$ and the product. Using the radial improved Strichartz,
\EQ{
 \|\LR D^{-1}(\cN_j\cU_k)\|_{(\fr 12+2\e,\fr 34-3\e,0)}
 \lec \|\cN_j\|_{(\fr 12-\e,\fr 14-\fr 23 \e,0)}
  \|\cU_k\|_{(3\e,\fr 12-\fr 73\e,-\e^2)}}
and
\begin{align*}
 \|\cU_k\|_{(3\e,\fr 12-\fr 73\e,-\e^2)}
\lec&  \|\cU_k\|_{(3\e,\fr 12-\fr {17}{10}\e-2\ka\e,\fr {19}{10}\e-6\ka\e-\e^2)}\\
\lec&(\|\cU_k\|^{\fr{6\e}{1-\e}}_{(\fr 12,\fr 3{10}-\fr\ka3,\fr 25-\ka)}
\|\cU_k\|^{\fr{1-7\e}{1-\e}}_{(0,\fr 12,0)})^{1-\e}\|\cU_k\|^{\e}_{(0,0,-\fr 12-\e)};
\end{align*}
Summing these estimates over dyadic pieces in the specified regions, and using non-sharp Sobolev embedding and interpolation, we obtain
\begin{align*}
\|I_{\cU}\LR D^{-1}(\cN\cU)_{LH}\|_{K}
 \le &C(\be) \|\cU\|_{K}^{1-\te}\|\cN\|_{W}^{1-\te} \|\cU\|_X^{\te}\|\cN\|_Y^{\te}
,\\
\|I_{\cU}\LR D^{-1}(\cN\cU)_{RL}\|_{K}
 \le& C(\be) \|\cU\|_{K}^{1-\te}\|\cN\|_{W}^{1-\te}\|\cU\|_X^{\te}\|\cN\|_Y^{\te}.
\end{align*}

(b) We consider only the case $j\ge k$ for $\cU_j \cU_k'$, since the other case is treated in the same way. For $(j,k)\in HH$, there are only high frequencies
for both $\cU$ and  $\cU'$. Hence, we have
\EQ{
 \|\cU_j \cU'_k\|_{(1-2\e,\fr 12+2\e,1+4\e)}
 \lec \|\cU_j\|_{(\fr 12- \e ,\fr 14+ \e ,\fr 12+2\e)} \|\cU'_k\|_{(\fr 12-\e,\fr 14+\e,\fr 12+2\e)}}
 and
 \begin{align*}
  \|\cU_j\|_{(\fr 12- \e ,\fr 14+ \e ,\fr 12+2\e)}
\lec&  \|\cU_j\|_{(\fr 12-\e,\fr 3{10}-\fr35\e-\fr13 (1-2\e)\ka,\fr {13}{20}-\fr{14}5\e-(1-2\e)\ka)}\\
\lec&  \|\cU_j\|_{(\fr 12-\e,\fr 3{10}-\fr35\e-\fr13 (1-2\e)\ka,\fr 7{10}-\fr{12}5\e+\fr13(1-2\e)\ka-\e^2)}\\
\lec&\|\cU_j\|^{1-2\e}_{(\fr 12,\fr 3{10}-\fr\ka3,\fr 7{10}+\fr\ka3)}
\|\cU_j\|^{2\e}_{(0,0,-\fr 12-\fr\e2)};
\end{align*}

In the case $(j,k)\in RR$, since $j$ is bounded from above,
\EQ{
  \|\cU_j \cU'_k\|_{(\fr 12+\e,\fr 34,\fr 54+\e)}
 \lec \|\cU_j\|_{(\fr 12-\e,\fr 14+\fr73\e,\fr 54+\e+\e^2)} \|\cU'_k\|_{(3\e,\fr 12-\fr73\e,-\e^2)}.}
 There are only low frequencies in this case.
Hence
\begin{align*}
 \|\cU_j\|_{(\fr 12-\e,\fr 14+\fr73\e,\fr 54+\e+\e^2)}
\lec&  \|\cU_j\|_{(\fr 12-\e,\fr {3}{10}(1-2\e)-\fr13 (1-2\e)\ka,\fr {2}{5}-\fr {9}{5}\e-(1-2\e)\ka-\e^2)}\\
\lec&\|\cU_j\|^{1-2\e}_{(\fr 12,\fr 3{10}-\fr\ka3,\fr 25-\ka)}
\|\cU_j\|^{2\e}_{(0,0,-\fr 12-\fr12\e)};
\end{align*}
and then
\begin{align*}
\|I_{\cN}D(\cU\cU')_{HH}\|_{W} \le& C(\be)
\|\cU\|_{K}^{1-\te}\|\cU'\|_{K}^{1-\te}\|\cU\|_X^{\te}\|\cU'\|_X^{\te},\\
\|I_{\cN}D(\cU\cU')_{RR}\|_{W} \le& C(\be)
\|\cU\|_{K}^{1-\te}\|\cU'\|_{K}^{1-\te}\|\cU\|_X^{\te}\|\cU'\|_X^{\te}.
\end{align*}
\end{proof}

\subsubsection{Duhamel trilinear terms}
Finally we estimate the trilinear terms which appear after the
normal transform. These are supposedly the easiest, but there is a
small complication due to the fact that we have to use negative
Sobolev spaces for $\cN$ in some of the products: \EQ{
 \pt \|fg\|_{\dot B^{-s}_{r,2}} \lec \|f\|_{\dot B^{-s}_{p,2}}\|g\|_{\dot B^s_{q,2}}
 \pr 0\le s<3/q,\ 1/r=1/p+1/q-s/3.}
In the next lemma, the constant may decay as $\be\to\I$, but we do not need it.
\begin{lem}
(a) There exists $\theta>0$ such that for any $\cU,\cV,\cW,\cN,\cN'$, we have
\begin{align*}
 \|I_{\cU}\LR D^{-1}\Om(D(\cU\cV),\cW)\|_{K} \lec&
 \|\cU\|_{K}^{1-\te}\|\cV\|_{K}^{1-\te}\|\cW\|_{K}^{1-\te}\|\cU\|_X^{\te}\|\cV\|_X^{\te}\|\cW\|_X^{\te},\\
 \|I_{\cU}\LR D^{-1}\Om(\cN,\LR D^{-1}(\cN'\cU))\|_{K}
 \lec& \|\cN\|_{W}^{1-\te}\|\cN'\|_{W}^{1-\te}\|\cU\|_{K}^{1-\te}\|\cN\|_Y^{\te}\|\cN'\|_Y^{\te}\|\cU\|_X^{\te}.
\end{align*}

(b) There exists $\theta>0$ such that for any $\cN,\cU,\cU'$, we have
\begin{align*}
&\|I_{\cN}D\ti\Om(\LR D^{-1}(\cN\cU),\cU')\|_{W}+\|I_{\cN}D\ti\Om(\cU,\LR D^{-1}(\cN\cU'))\|_{W}\\
 \lec&
\|\cN\|_{W}^{1-\te}\|\cU\|_{K}^{1-\te}\|\cU'\|_{K}^{1-\te}\|\cN\|_Y^{\te}\|\cU\|_X^{\te}\|\cU'\|_X^{\te}.
\end{align*}
\end{lem}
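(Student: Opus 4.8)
The plan is to follow the three-step scheme already used for the bare and Duhamel bilinear lemmas above: for each trilinear expression, first estimate the inner bilinear product, then feed the result into the outer normal-form operator $\Om$ (resp.\ $\ti\Om$), then recover the norm $K$ (resp.\ $W$) by the inhomogeneous Strichartz estimate for $I_{\cU}$ (resp.\ $I_{\cN}$) in a non-sharp $H^s$-admissible dual norm of the type $(1-2\e,\cdot,\cdot)$, and finally interpolate between $S$ and $Z$ to extract the small factors $\|\cdot\|_X^\te$ and $\|\cdot\|_Y^\te$. The only ingredient not already present in the bilinear lemmas is the fractional Leibniz rule $\|fg\|_{\dot B^{-s}_{r,2}}\lec\|f\|_{\dot B^{-s}_{p,2}}\|g\|_{\dot B^s_{q,2}}$ (with $0\le s<3/q$ and $1/r=1/p+1/q-s/3$) recorded just above, needed because the $\cN$-factors sitting inside the inner products in the second term of (a) and in (b) cannot be measured at positive regularity.

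First I would treat the first term of (a). On the cut-off region $XL\cup LL$ one has $\om_1\sim_\al|\x-\y|$, so $\LR D^{-1}\Om(D(\cU\cV),\cW)$ is, dyadic block by dyadic block, a frequency-localized version of $\LR D^{-1}((\cU\cV)\cW)$ with an $O(1)$ symbol. Hence it suffices to bound $\cU\cV$ in an $L^2$-based product Strichartz norm by H\"older and the Coifman--Meyer dyadic bilinear estimate (\cite[Lemma 3.5]{GN}), multiply by $\cW$ in the same fashion, and apply the inhomogeneous Strichartz estimate $\|I_{\cU}g\|_K\lec\|g\|_{(1-2\e,\cdot,\cdot)}$, valid since every exponent used is off-endpoint in $L^p$ and in regularity. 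In each H\"older splitting I would assign to each of $\cU,\cV,\cW$ one factor in the $L^\I_t$-Besov norm $X$, spending a sliver of regularity through the non-sharp Sobolev embedding $L^2\subset\dot B^{-3/2}_\I$, and interpolate the remaining factors with $S$; the dyadic sums then converge and yield some $\te>0$. For the second term of (a) and for (b) the same template applies with two changes. The inner product ($\LR D^{-1}(\cN'\cU)$, resp.\ $\LR D^{-1}(\cN\cU)$) carries a $\cN$-factor, so I would estimate it in a negative-regularity product Besov space via the fractional Leibniz rule, choosing the exponents so that $\cN$ (resp.\ $\cN'$) lands in a norm interpolating $W$ and $Y$ while $\cU$ lands in one interpolating $K$ and $X$, keeping a margin of regularity in reserve. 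And in (b) the derivative gained by $\ti\Om$ on $XL\cup LX$ (where $\ti\om_1\sim_\al|\x|$) cancels the outer $D$, after which the high--high portion of the remaining $\cU$-product is handled by the radial improvement of the Strichartz norms, exactly as in the Duhamel bilinear lemma, and one closes with $\|I_{\cN}g\|_W\lec\|g\|_{(1-2\e,\cdot,\cdot)}$.

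The hard part will not be any individual estimate but the bookkeeping: in each of the many H\"older splittings one must select a tuple of Lebesgue and regularity exponents that is simultaneously $H^s$-admissible (radially when needed) for the Strichartz step, compatible with the fractional Leibniz rule for the negative-regularity $\cN$-products, and strictly non-sharp so that one can both interpolate with $Z$ and sum the dyadic series. Since no new analytic input beyond the bilinear analysis is required — and, as noted, the constant may be allowed to degenerate as $\be\to\I$ — these trilinear terms are indeed the easiest of the three families, and the argument terminates after summing the dyadic blocks and interpolating.
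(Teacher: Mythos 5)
Your proposal is correct and follows essentially the same path as the paper: reduce $\LR D^{-1}\Om(D(\cU\cV),\cW)$ (resp.\ $D\ti\Om$) to an $O(1)$-symbol product using $\om_1\sim|\x-\y|$ (resp.\ $\ti\om_1\sim|\x|$), invoke the negative-regularity product estimate $\|fg\|_{\dot B^{-s}_{r,2}}\lec\|f\|_{\dot B^{-s}_{p,2}}\|g\|_{\dot B^{s}_{q,2}}$ for the $\cN$-factors, close via the dual Strichartz estimate, and extract $\|\cdot\|_Z^\te$ by non-sharp Sobolev embedding plus interpolation against $S$. The only cosmetic divergence is that the paper bounds the Duhamel source in the crude norm $(1,\tfrac12,\cdot)=L^1_tL^2_x$ (with ample room since there are three factors to distribute), rather than a near-dual pair $(1-2\e,\cdot,\cdot)$ as you suggest; either choice of admissible dual norm closes here.
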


\begin{proof}
(a) Since
\EQ{
 &\|\LR D^{-1}\Om(D(\cU\cV)_j,\cW_k)\|_{(1,\frac{1}{2},1)} =\|\Om(D(\cU\cV)_j,\cW_k)\|_{(1,\frac{1}{2},0)}\\
  &\lesssim\|(\cU\cV)_j\cW_k)\|_{(1,\frac{1}{2},0)}
  \lesssim \|\cU\|_{(\frac{1}{3},\frac{1}{6},0)}\|\cV\|_{(\frac{1}{3},\frac{1}{6},0)}\|\cW\|_{(\frac{1}{3},\frac{1}{6},0)},}
by  non-sharp Sobolev embedding and interpolation,
\begin{align*}
\|\cU\|_{(\frac{1}{3},\frac{1}{6},0)}\lec &\|\cU\|_{(\fr 13,\fr 4{15}-\fr 2{9}\ka,\fr3{10} -\fr23\ka)}\lec\|\cU\|_{(\fr 13,\fr 4{15}-\fr 2{9}\ka,\fr1{6}-\fr 23\ka-\e|\fr12+\fr 29\ka-\e)}\\
\lec&(\|\cU\|^{\fr56}_{(\fr 12,\fr 3{10}-\fr\ka3,\fr 25-\ka|\fr 7{10}+\fr\ka3)}
\|\cU\|^{\fr16}_{(0,\fr 12,0|1)})^{\fr45}\|\cU\|^{\fr15}_{(0,0,-\fr 12-5\e)}.
\end{align*}
Hence
\EQ{
  \|I_{\cU}\LR D^{-1}\Om(D(\cU\cV),\cW)\|_{K} \lec&
 \|\cU\|_{K}^{1-\te}\|\cV\|_{K}^{1-\te}\|\cW\|_{K}^{1-\te}\|\cU\|_X^{\te}\|\cV\|_X^{\te}\|\cW\|_X^{\te}.}
For $\LR D^{-1}\Om(\cN_j,\LR D^{-1}(\cN'\cU)_k)$, if $(j,k)\in XL$, we have
\EQ{
&\|\LR D^{-1}\Om(\cN_j,\LR D^{-1}(\cN'\cU)_k)\|_{(1,\frac{1}{2},1)}=\|\Om(\cN_j,\LR D^{-1}(\cN'\cU)_k)\|_{(1,\frac{1}{2},0)}\\
 \lec& \|D^{-1}\cN_j\|_{(\fr 12-\e,2\e\pm\fr {\e^2}6,5\e)_+} \|\LR D^{-1}(\cN'\cU)_k\|_{(\fr 12+\e,\fr 12-2\e\pm\fr {\e^2}6,-5\e)_\cap}\\
 \lec& \|D^{-1}\cN_j\|_{(\fr 12-\e,2\e\pm\fr {\e^2}6,5\e)_+} \|(\cN'\cU)_k\|_{(\fr 12+\e,\fr 12-2\e\pm\fr {\e^2}6,-\fr 38-5\e)_\cap}\\
 \lec& \|D^{-1}\cN_j\|_{(\fr 12-\e,2\e\pm\fr {\e^2}6,5\e)_+} \|\cN\|_{(2\e,\fr 38-\fr73\e\pm\fr {\e^2}6,-\fr 38-5\e)_+}
 \|\cU\|_{(\fr12-\e,\fr 14+2\e\pm\fr {\e^2}3,\fr 38+5\e)_\cap},}
where we used the product estimate for negative Sobolev spaces for $\cN'\cU$;
and
 \begin{align*}
 &\|\cU\|_{(\fr12-\e,\fr 14+2\e\pm\fr {\e^2}3,\fr 38+5\e)_\cap}
\lec  \|\cU\|_{(\fr 12-\e,\fr 3{10}-\fr35\e-\fr13 (1-2\e)\ka,\fr {21}{40}-\fr{14}5\e-(1-2\e)\ka\pm\e^2)_\cap}\\
\lec&  \|\cU\|_{(\fr 12-\e,\fr 3{10}-\fr35\e-\fr13 (1-2\e)\ka,\fr
{2}{5}-\fr{9}5\e-(1-2\e)\ka-\e^2|
\fr {7}{10}-\fr{12}5\e+\fr13(1-2\e)\ka-\e^2)}\\
\lec&\|\cU\|^{1-2\e}_{(\fr 12,\fr 3{10}-\fr\ka3,\fr 25-\ka|\fr 7{10}+\fr\ka3)}
\|\cU\|^{2\e}_{(0,0,-\fr 12-\fr\e2)},
\end{align*}
\begin{align*}
\|D^{-1}\cN_j\|_{(\fr 12-\e,2\e\pm\fr {\e^2}6,5\e)_+}\lec &\|\cN_j\|_{(\fr 12-\e,\fr 14-\fr\e2-\fr13 (1-2\e)\ka,-\fr 14-\fr 52\e-(1-2\e)\ka\pm\fr12\e^2)_+}\\
\lec&\|\cN_j\|^{1-2\e}_{(\fr 12,\fr 14-\fr\ka3,-\fr 14-\ka)}\|\cN_j\|^{2\e}_{(0,0,-\fr 32\pm \fr\e4 )_+},
\end{align*}
\begin{align*}
\|\cN\|_{(2\e,\fr 38-\fr73\e\pm\fr {\e^2}6,-\fr 38-5\e)_+}\lec &\|\cN\|_{(2\e,\fr 12-\fr32\e-\fr43\ka\e,-\fr 52\e-4\ka\e\pm\fr12\e^2)_+}\\
\lec&(\|\cN\|^{\fr{4\e}{1-\e}}_{(\fr 12,\fr 14-\fr\ka3,-\fr
14-\ka)}\|\cN\|^{\fr{1-5\e}{1-\e}}_{(0,\fr
12,0)})^{1-\e}\|\cN\|^{\e}_{(0,0,-\fr 32\pm \fr\e2 )_+}.
\end{align*}
If $(j,k)\in LL$, we have
\EQ{
&\|\Om(\cN_j,\LR D^{-1}(\cN'\cU)_k)\|_{L^1L^2}\\
 \lec& \|D^{-1}\cN_j\|_{(\fr 12-\e,\fr\e3\pm\fr {\e^2}6,0)_+} \|(\cN'\cU)_k\|_{(\fr 12+\e,\fr 12-\fr\e3\pm\fr {\e^2}6,0)_\cap}\\
 \lec& \|D^{-1}\cN_j\|_{(\fr 12-\e,2\e\pm\fr {\e^2}6,5\e)_+} \|(\cN'\cU)_k\|_{(\fr 12+\e,\fr 12-\fr\e3\pm\fr {\e^2}6,-\fr 38-5\e)_\cap}\\
 \lec& \|D^{-1}\cN_j\|_{(\fr 12-\e,2\e\pm\fr {\e^2}6,5\e)_+} \|\cN\|_{(2\e,\fr 38-\fr73\e\pm\fr {\e^2}6,-\fr 38-5\e)_+}
 \|\cU\|_{(\fr12-\e,\fr 14+\fr{11}3\e\pm\fr {\e^2}3,\fr 38+5\e)_\cap}.}
 By  non-sharp Sobolev embedding,
\begin{align*}
 &\|\cU\|_{(\fr12-\e,\fr 14+\fr{11}3\e\pm\fr {\e^2}3,\fr 38+5\e)_\cap}
\lec  \|\cU\|_{(\fr 12-\e,\fr 3{10}-\fr35\e-\fr13 (1-2\e)\ka,\fr {21}{40}-\fr{39}5\e-(1-2\e)\ka\pm\e^2)_\cap}\\
\lec&  \|\cU\|_{(\fr 12-\e,\fr 3{10}-\fr35\e-\fr13 (1-2\e)\ka,\fr
{2}{5}-\fr{9}5\e-(1-2\e)\ka-\e^2|\fr
{7}{10}-\fr{12}5\e+\fr13(1-2\e)\ka-\e^2)}.
\end{align*}
Hence
\EQ{
 \|I_{\cU}\LR D^{-1}\Om(\cN,\LR D^{-1}(\cN'\cU))\|_{K}
 \lec& \|\cN\|_{W}^{1-\te}\|\cN'\|_{W}^{1-\te}\|\cU\|_{K}^{1-\te}\|\cN\|_Y^{\te}\|\cN'\|_Y^{\te}\|\cU\|_X^{\te}.}

(b)For $D\ti\Om(\LR D^{-1}(\cN\cU),\cU')$, we have
\EQ{
 &\|D\ti\Om(\LR D^{-1}(\cN\cU),\cU')\|_{L^1L^2} \\
 \lec &\|\LR D^{-1}(\cN\cU)\|_{(\fr 12+\e,\fr 38-2\e,0)}\|\cU'\|_{(\fr 12-\e,\fr 18+2\e,0)}\\
 \lec &\|\cN\cU\|_{(\fr 12+\e,\fr 12-\fr \e3,-\fr 38-5\e)_+}
 \|\cU'\|_{(\fr 12-\e,\fr 14+\fr {11}3\e,\fr 38+5\e)_\cap}\\
  \lec&\|\cN\|_{(2\e,\fr 38-\fr73\e\pm\fr {\e^2}6,-\fr 38-5\e)_+}
 \|\cU\|_{(\fr12-\e,\fr 14+\fr{11}3\e\pm\fr {\e^2}3,\fr 38+5\e)_\cap}\|\cU'\|_{(\fr12-\e,\fr 14+\fr{11}3\e,\fr 38+5\e)},}
where we used the product estimate twice, but did not use any restriction on $j,k$. Hence we have the same estimate on $D\ti\Om(\cU,\LR D^{-1}(\cN\cU'))$, and so
\begin{align*}
&\|I_{\cN}D\ti\Om(\LR D^{-1}(\cN\cU),\cU')\|_{W}+\|I_{\cN}D\ti\Om(\cU,\LR D^{-1}(\cN\cU'))\|_{W}\\
 \lec&
\|\cN\|_{W}^{1-\te}\|\cU\|_{K}^{1-\te}\|\cU'\|_{K}^{1-\te}\|\cN\|_Y^{\te}\|\cU\|_X^{\te}\|\cU'\|_X^{\te}.
\end{align*}
Thus, the proof is completed.
\end{proof}

Note that in the above estimates we needed the $L^\I_t$-type norms only for the bare bilinear terms, but not for the Duhamel terms. Thus we have obtained
\begin{lem}\label{B-Q-T}
There exist $\te>0$, $\y>0$ and $C(\be)>1$ such that for each $\be\gg 1$ and any $\vec \cU_1,\vec \cU_2,\vec \cU_3$, we have
\EQ{\label{NL est}
 \pt2^{\te\be}\|B(\vec \cU_1,\vec \cU_2)\|_S+\|Q(\vec \cU_1,\vec \cU_2)\|_S/C(\be) \lec \|\vec \cU_1\|_S^{1-\te}\|\vec \cU_2\|_S^{1-\te}\|
 \vec \cU_1\|_Z^\te\|\vec \cU_2\|_Z^\te,
 \pr\|T(\vec \cU_1,\vec \cU_2,\vec \cU_3)\|_S \lec \|\vec \cU_1\|_S^{1-\te}\|\vec \cU_2\|_S^{1-\te}\|\vec \cU_3\|_S^{1-\te}\|\vec \cU_1\|_Z^\te
 \| \vec \cU_2\|_Z^\te\|\vec \cU_3\|_Z^\te.}
For the Duhamel terms we have also
\EQ{\label{Q-T}
 \pt\|Q(\vec \cU_1,\vec \cU_2)\|_S \lec C(\be)\|\vec \cU_1\|_{\widetilde{S}}\|\vec \cU_2\|_{\widetilde{S}},
 \pr\|T(\vec \cU_1,\vec \cU_2,\vec \cU_3)\|_S \lec \|\vec \cU_1\|_{\widetilde{S}}\|\vec \cU_2\|_{\widetilde{S}}\|\vec \cU_3\|_{\widetilde{S}},}
 where
 \EQ{
 \pt \widetilde{S}:=\widetilde{K}\times \widetilde{W},
 \pr \widetilde{K}:=[(\eta,\fr 12-\fr 25\eta,
 \fr 45\eta|1- \fr
35\eta+\fr83\ka\eta)\cap(\fr 12,\fr 3{10}-\fr\ka3,\fr 25-\ka|\fr
7{10}+\fr\ka3)],
 \pr \widetilde{W}:=(\eta,\fr 12-\fr 12\eta,
 -\fr 14\eta)\cap(\fr 12,\fr 14-\fr\ka3,-\fr 14-\ka).
 }
\end{lem}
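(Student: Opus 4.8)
The plan is to obtain Lemma~\ref{B-Q-T} by \emph{assembling} the three preceding lemmas — on the bare bilinear terms, the Duhamel bilinear terms, and the Duhamel trilinear terms — using only the bookkeeping $S=K\times W$, $Z=X\times Y$, $\widetilde S=\widetilde K\times\widetilde W$, so that for $\vec\cU=(\cU,\cN)$ one has $\|\vec\cU\|_S\sim\|\cU\|_K+\|\cN\|_W$, $\|\vec\cU\|_Z\sim\|\cU\|_X+\|\cN\|_Y$ and likewise for $\widetilde S$. First I would match the components to the earlier estimates. The Klein--Gordon and wave components of $B(\vec\cU_1,\vec\cU_2)$, built from $\LR D^{-1}\Om(\cN_1,\cU_2)$ and $D\ti\Om(\cU_1,\cU_2)$, are exactly the quantities estimated in $K$ and in $W$ by parts (a) and (b) of the bare bilinear lemma. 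The two components of $Q(\vec\cU_1,\vec\cU_2)$, involving $(\cN_1\cU_2)_{LH\cup RL}$ and $(\cU_1\bar\cU_2)_{HH\cup RR}$, are covered region by region ($LH$, $RL$ and $HH$, $RR$) in $K$ and in $W$ by parts (a) and (b) of the Duhamel bilinear lemma. Finally the Klein--Gordon component of $T(\vec\cU_1,\vec\cU_2,\vec\cU_3)$, consisting of $\LR D^{-1}\Om(D(\cU_1\bar\cU_2),\cU_3)$ and $\LR D^{-1}\Om(\cN_1,\LR D^{-1}(\cN_2\cU_3))$, and its wave component $D\ti\Om(\cU_1,\LR D^{-1}(\cN_2\cU_3))$, are estimated in $K$ and in $W$ by parts (a) and (b) of the Duhamel trilinear lemma, the three input slots there supplying precisely the three factors $\|\vec\cU_i\|_S^{1-\te}\|\vec\cU_i\|_Z^\te$ after majorizing $\|\cU_i\|_X,\|\cN_i\|_Y\lesssim\|\vec\cU_i\|_Z$ and $\|\cU_i\|_K,\|\cN_i\|_W\lesssim\|\vec\cU_i\|_S$. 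Taking $\te$ to be the minimum of the finitely many exponents that appear, and noting that the remaining sign and conjugation variants of the nonlinearities $nu$, $|u|^2$ are handled identically (they only change which of the equivalent phases $\om_i$, $\ti\om_i$ occurs), this gives the bilinear and trilinear estimates up to the $\be$-weights.

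Next I would read off the $\be$-dependence. The bare bilinear lemma produces a genuine gain $2^{-\te\be}$ — from the frequency restriction to $XL\cup LL$ (resp.\ $XL\cup LX$) together with $\om_i\gtrsim_\al 2^{\max(j,k)}$, $\ti\om_i\gtrsim_\al 2^{\max(j,k)}$ there — so after shrinking $\te$ we get $2^{\te\be}\|B(\vec\cU_1,\vec\cU_2)\|_S\lesssim\|\vec\cU_1\|_S^{1-\te}\|\vec\cU_2\|_S^{1-\te}\|\vec\cU_1\|_Z^\te\|\vec\cU_2\|_Z^\te$. The Duhamel bilinear lemma loses only a constant $C(\be)$ (from the $O(\be)$ dyadic scales spanning the $\be$-dependent regions $LH$, $RL$, $HH$, $RR$, and from the radial Strichartz improvement), which is harmless after dividing by $C(\be)$, and the Duhamel trilinear lemma has a $\be$-uniform constant. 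The only remaining point for \eqref{NL est} is the $L^\I_t$ part of $S$, namely $(0,\tfrac12,0|1)$ in $K$ and $(0,\tfrac12,0)$ in $W$: for $B$ these are precisely the bounds already proved in the bare bilinear lemma, while for $Q$ and $T$ they come from the energy inequality $\|I_\cU f\|_{L^\I_t H^1}+\|I_\cN g\|_{L^\I_t L^2}\lesssim\|f\|_{L^1_t H^1}+\|g\|_{L^1_t L^2}$ applied to the $(1,\tfrac12,\cdot)$-type integrands already controlled in the Duhamel lemmas (this is the content of the remark that the $L^\I_t$ norms are needed only for the bare bilinear terms).

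For the second group \eqref{Q-T} the plan is to re-run the Duhamel bilinear and trilinear estimates but to \emph{stop one step before} the final interpolation against the low-regularity norms $(0,0,-\tfrac12-\e)$ and $(0,0,-\tfrac32\pm\e)_+$. In each of those proofs the concluding step has the schematic form $\|\cU_k\|_{(b,d,s)}\lesssim\|\cU_k\|_S^{1-\te}\|\cU_k\|_Z^\te$ with an intermediate radial-Strichartz norm $(b,d,s)$ whose time exponent is of $\eta$-type (roughly $b\in\{3\e,\tfrac12-\e,\tfrac12+\e\}$); keeping that intermediate norm instead of interpolating it away, and using only non-sharp Sobolev embedding, one checks that it is dominated by $\|\cU_k\|_{\widetilde K}$, resp.\ by $\|\cN_j\|_{\widetilde W}$, once the parameter $\eta$ is identified with the appropriate multiple of $\e$. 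Summing over dyadic pieces then yields $\|Q(\vec\cU_1,\vec\cU_2)\|_S\lesssim C(\be)\|\vec\cU_1\|_{\widetilde S}\|\vec\cU_2\|_{\widetilde S}$ and $\|T(\vec\cU_1,\vec\cU_2,\vec\cU_3)\|_S\lesssim\|\vec\cU_1\|_{\widetilde S}\|\vec\cU_2\|_{\widetilde S}\|\vec\cU_3\|_{\widetilde S}$; the bare bilinear term $B$ is deliberately omitted from \eqref{Q-T} because its bound genuinely requires the $L^\I_t B^{-s}_{\I}$ smallness. Note also that $\widetilde S$ collapses to $S$ as $\eta\to0$, and that $\|\vec\cU\|_{\widetilde S}\lesssim\|\vec\cU\|_S$ by interpolating between the two defining components of $S$, so \eqref{Q-T} does deliver a usable purely-Strichartz bound.

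I expect the main obstacle to be precisely this last verification: choosing the exact coefficients of $\eta$ and $\ka$ in $\widetilde K$, $\widetilde W$ so that \emph{every} intermediate Strichartz norm occurring in the proofs of the two Duhamel lemmas embeds, through non-sharp Sobolev and real interpolation, into $\widetilde S$, while keeping all exponents radially admissible and non-sharp and compatible with $\eta\to0$ giving back $S$. This is a delicate but mechanical exponent count; there is no new analytic input beyond what the three preceding lemmas already contain, the genuine analysis (the refined non-endpoint Strichartz estimates and the Coifman--Meyer-type bilinear bounds on dyadic pieces) having been carried out there.
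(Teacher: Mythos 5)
Your proposal is correct and follows essentially the same route as the paper, which gives no separate argument for this lemma beyond the remark that the $L^\infty_t$-type factors on the right were used only for the bare bilinear (boundary) term, and tacitly assembles the three preceding sub-lemmas for \eqref{NL est} and re-reads the Duhamel proofs without the final interpolation against $Z$ to obtain \eqref{Q-T} with $\widetilde{S}$. Your filling-in of the bookkeeping, the source of the $2^{-\theta\beta}$ gain and the $C(\beta)$ loss, and the identification of the intermediate non-$L^\infty_t$ Strichartz norms with $\widetilde{K}$, $\widetilde{W}$ are all consistent with what the paper intends.
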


\subsection{Nonlinear profile approximation} We will prove Lemma \ref{NL
profile} and then prove Lemma \ref{lem:crit} by the following two lemmas.

\begin{lem}[Stability]\label{Stability}
For any $A>0$ and $\sigma>0$, there exists $\varsigma>0$ with the following property:
Suppose that $ \vec  \cU_a$ satisfies
$\|\vec \cU_a\|_{S(0,\I)}\leq A$ and approximately solves the Klein-Gordon-Zakharov system in the sense that
\begin{equation*}
\begin{split}
\vec \cU_a=U(t)\vec \cU_a(0)+U(t)B(\vec \cU_a(0))-NL(\vec \cU_a)+\vec e,\quad \|\vec e\|_{S(0,\I)}\leq \varsigma.
\end{split}
\end{equation*}
Then for any initail data $\vec \cU(0)$ satisfying $\|\vec \cU(0)-\vec \cU_a(0)\|_{H^1\times L^2}<\varsigma$, there is a unique global solution $\vec \cU$ satisfying
$\|\vec \cU-\vec \cU_a\|_{S(0,\I)}< \sigma$.
\end{lem}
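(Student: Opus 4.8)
The plan is to run a long-time perturbation argument of Kenig--Merle type, adapted to the normal-form formulation. The key structural remark is that the bilinear boundary operator $B$ is \emph{local in time} (it carries no Duhamel integral), so that the integral equation for a genuine solution $\vec\cU$ localizes to any subinterval $[a,b]\subset(0,\I)$ as
\[
 \vec\cU(t)=U(t-a)\big[\vec\cU(a)+B(\vec\cU(a))\big]-B(\vec\cU(t))-Q_a(\vec\cU)(t)-T_a(\vec\cU)(t),
\]
where $Q_a,T_a$ denote the Duhamel integrals started at $t=a$; the same identity holds for $\vec\cU_a$ up to the error $\vec e(t)$ together with its transported value $U(t-a)\vec e(a)$. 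I would keep $\be$ as the large parameter already fixed in Lemma \ref{B-Q-T} (so that $\vec\cU\mapsto\vec\cU+B(\vec\cU)$ is a near-identity on bounded $S$-balls and $C(\be)$ is a fixed, if large, constant), and then fix a small threshold $\de_0=\de_0(\be)\ll C(\be)^{-1}$ governing the length-in-$S$ of the subintervals.

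Since $\|\vec\cU_a\|_{S(0,\I)}\le A$, the first step is to split $(0,\I)=\bigcup_{k=1}^M I_k$, $I_k=[t_{k-1},t_k]$, $t_0=0$, so that $\|\vec\cU_a\|_{S(I_k)}\le\de_0$ on each piece, with $M=M(A,\de_0)$. Writing $\vec R:=\vec\cU-\vec\cU_a$, the heart of the matter is an induction on $k$: assuming $\|\vec R(t_{k-1})\|_{H^1\times L^2}\le\rho_{k-1}$ with $\rho_{k-1}$ small, one subtracts the localized integral equations for $\vec\cU$ and $\vec\cU_a$ on $I_k$, and applies the Strichartz estimate (bounding $U(\cdot)(\text{data})$ and the Duhamel pieces in $S(I_k)$) together with the multilinear estimates \eqref{NL est}--\eqref{Q-T} of Lemma \ref{B-Q-T} in their bilinear/trilinear \emph{difference} form. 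This gives
\[
 \|\vec R\|_{S(I_k)}\lec \|\vec R(t_{k-1})\|_{H^1\times L^2}+\| B(\vec\cU(t_{k-1}))-B(\vec\cU_a(t_{k-1}))\|_{H^1\times L^2}+\varsigma+\Lambda\,\|\vec R\|_{S(I_k)},
\]
with $\Lambda\lec C(\be)\big(\|\vec\cU\|_{S(I_k)}+\|\vec\cU_a\|_{S(I_k)}\big)+\big(\|\vec\cU\|_{S(I_k)}+\|\vec\cU_a\|_{S(I_k)}\big)^2$ coming from the $B,Q$-differences (carrying $C(\be)$ and $2^{-\te\be}\le C(\be)$) and the $T$-difference. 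Under the bootstrap hypothesis $\|\vec R\|_{S(I_k)}\le\de_0$ one has $\|\vec\cU\|_{S(I_k)}\le2\de_0$, so $\de_0\ll C(\be)^{-1}$ forces $\Lambda\le1/2$; the difference of boundary terms is handled by the fixed-time bilinear Fourier-multiplier bound underlying the ``bare bilinear'' lemma, giving $\|B(\vec\cU(t_{k-1}))-B(\vec\cU_a(t_{k-1}))\|_{H^1\times L^2}\lec 2^{-\te\be}\|\vec R(t_{k-1})\|_{H^1\times L^2}$. Absorbing $\Lambda\|\vec R\|_{S(I_k)}$ closes the bootstrap via a standard continuity argument and yields $\|\vec R\|_{S(I_k)}\le\rho_k\lec\rho_{k-1}+\varsigma$; since the $S$-norm controls $L^\I_t(H^1\times L^2)$ this also bounds $\|\vec R(t_k)\|_{H^1\times L^2}$ and lets the induction proceed. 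The local theory of \cite{GNW} together with this a priori control shows that $\vec\cU$ indeed exists on each $I_k$, hence globally.

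Iterating over $k=1,\dots,M$ yields $\rho_k\lec(2C_0)^k\varsigma$ for a fixed $C_0$, whence $\|\vec\cU-\vec\cU_a\|_{S(0,\I)}\le\sum_{k=1}^M\|\vec R\|_{S(I_k)}\lec M(2C_0)^M\varsigma<\sigma$ as soon as $\varsigma=\varsigma(A,\sigma)$ is chosen small enough \emph{after} $\be$, $\de_0$ and $M$ have been fixed; global existence of $\vec\cU$ is then part of the conclusion. The main obstacle is precisely this ordering of parameters: the Duhamel terms $Q$ in \eqref{Q-T} carry the large constant $C(\be)$, so the subintervals cannot merely have $S$-norm $O(1)$ --- they must be short enough that $C(\be)\|\vec\cU_a\|_{S(I_k)}\ll1$, which makes $M$ depend on $C(\be)$ as well as $A$, and then the errors compound like $C_0^M$; this is consistent (not circular) only because $\be$, then $\de_0$ and $M$, then $\varsigma$ are fixed in that order. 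A secondary point is that the normal-form boundary term $B$ is \emph{not} small just because $I_k$ is short, so its contribution (and that of its difference at the endpoints $t_{k-1}$, in the fixed-time $H^1\times L^2$ norm) must instead be extracted from the $2^{-\te\be}$ gain already present in Lemma \ref{B-Q-T}.
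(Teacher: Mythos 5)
Your overall scheme --- a long-time perturbation over finitely many subintervals, with the $2^{-\theta\beta}$ gain from Lemma \ref{B-Q-T} compensating for the fact that the boundary term $B$ carries no time integral --- is the right one, and the localized identity $\vec\cU(t)=U(t-a)\big[\vec\cU(a)+B(\vec\cU(a))\big]-B(\vec\cU(t))-Q_a(\vec\cU)(t)-T_a(\vec\cU)(t)$ is exactly what one should use. However, there is a concrete gap in the subinterval decomposition. You split $(0,\infty)$ so that $\|\vec\cU_a\|_{S(I_k)}\le\delta_0$ and then invoke $\|\vec\cU\|_{S(I_k)}\le 2\delta_0$ to make $\Lambda\le 1/2$. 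But $S$ contains $L^\infty_t(H^1\times L^2)$: the component $(0,\tfrac12,0|1)$ of $K$ in \eqref{def S} is $L^\infty_t$-in-time, and the paper says so explicitly just below \eqref{def S}. Consequently $\|\vec\cU_a\|_{S(I_k)}$ does \emph{not} shrink as $|I_k|\to 0$; unless $\vec\cU_a$ is globally small there is no finite decomposition achieving $\|\vec\cU_a\|_{S(I_k)}\le\delta_0$, and the bootstrap as you wrote it never closes. Moreover the coefficient you assign to the Duhamel part of $\Lambda$, namely $C(\beta)\big(\|\vec\cU\|_{S(I_k)}+\|\vec\cU_a\|_{S(I_k)}\big)$, is then of size $C(\beta)A$ rather than $C(\beta)\delta_0$.

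The fix is to decompose using the auxiliary norm $\widetilde S$ from \eqref{Q-T}, which has only $L^{1/\eta}_t$ and $L^2_t$ components and therefore does vanish on short intervals; this is precisely why Lemma \ref{B-Q-T} records the Duhamel estimates a second time with $\widetilde S$ in place of $S$. With the intervals chosen so that $\|\vec\cU_a\|_{\widetilde S(I_k)}\le\delta_0$, the $Q,T$-differences contribute $\Lambda_{QT}\les C(\beta)\big(\|\vec\cU\|_{\widetilde S(I_k)}+\|\vec\cU_a\|_{\widetilde S(I_k)}\big)+(\cdots)^2\les C(\beta)\delta_0\ll 1$ under the bootstrap. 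The bare $B$-difference, on the other hand, is controlled only by the full $S$-norm and so is \emph{not} small from the interval length; its coefficient is $\les 2^{-\theta\beta}\big(\|\vec\cU\|_{S(I_k)}+\|\vec\cU_a\|_{S(I_k)}\big)\les 2^{-\theta\beta}A$, so you must allow $\beta$ to grow with $A$ (so that $2^{-\theta\beta}A\ll1$) rather than keep it as a universal constant ``already fixed in Lemma \ref{B-Q-T}'' as you state; that near-identity claim for $\varphi\mapsto\varphi+B(\varphi)$ only holds on balls whose radius is tied to $\beta$. With these two corrections the parameter order becomes $\beta=\beta(A)$, then $\delta_0=\delta_0(\beta)$, then $M=M(A,\delta_0)$ via the $\widetilde S$-decomposition, then $\varsigma$, and the rest of your induction goes through. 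The paper itself refers this proof to the Zakharov case in \cite{GNW2} without spelling out the details, but the $S$ versus $\widetilde S$ dichotomy in Lemma \ref{B-Q-T} is set up exactly to make this work.
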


With $J$ close to $\bar{J}$ and large $n$, our approximate solution is given by
\EQ{
 \vec{\cU}_n^J=(\cU_n^J,\cN_n^J):= \sum_{j=1}^J(\np U_n^j,\np N_n^j)+(\pe \cU_n,\pe \cN_n).}
To prove Lemma \ref{NL profile}, we only need to  prove that $\vec \cU_n^J$ is an approximate solution of the Klein-Gordon-Zakharov system. In fact, we have
\begin{lem}\label{per}
Suppose that $\|(\np U_n^j,\np N_n^j)\|_S<\I$ for all $j<\bar J$, then
\begin{align*}
\lim_{J\to \bar{J}}\limsup_{n\to\I} \|U(t)B(\vec \cU_n^J(0))-NL(\vec \cU_n^J) -\sum_{j=1}^J[U(t)B(\vec{\np U}_n^j(0))-NL(\vec{\np U}_n^j)]\|_{S}=0.
\end{align*}
\end{lem}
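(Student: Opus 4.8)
We expand $U(t)B(\vec\cU_n^J(0))-NL(\vec\cU_n^J)$ using the bilinearity of $B,Q$ and the trilinearity of $T$. Writing $\vec\cU_n^J=\sum_{j=1}^J\vec{\np U}_n^j+\vec R_n^J$ with $\vec R_n^J:=(\pe\cU_n,\pe\cN_n)$ (itself a free solution), each of $B(\vec\cU_n^J,\vec\cU_n^J)$, $Q(\vec\cU_n^J,\vec\cU_n^J)$ and $T(\vec\cU_n^J,\vec\cU_n^J,\vec\cU_n^J)$ breaks into its fully diagonal part $\sum_{j=1}^J B(\vec{\np U}_n^j)$, $\sum_j Q(\vec{\np U}_n^j)$, $\sum_j T(\vec{\np U}_n^j)$ — which together reproduce exactly the subtracted sum $\sum_{j=1}^J[U(t)B(\vec{\np U}_n^j(0))-NL(\vec{\np U}_n^j)]$ — plus two kinds of remaining multilinear terms: (i) terms whose slots are all nonlinear profiles but which involve at least two distinct indices $j\neq k$, and (ii) terms with at least one slot equal to the remainder $\vec R_n^J$. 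Hence it suffices to show that the $S$-norms of the type (i) and type (ii) contributions vanish, in the order $\limsup_{n\to\I}$ (for fixed $J$) and then $\lim_{J\to\bar J}$.

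The multilinear estimates of Lemma~\ref{B-Q-T} require uniform bounds, which I would establish first. By hypothesis $\|\vec{\np U}^j\|_S<\I$ for each $j$; the energy and $K_i$ orthogonality \eqref{linear orth} makes $E(\lp U^j,\lp N^j)$ and $K_i(\lp U^j)$ asymptotically summable in $j$ with total controlled by the bounded data energy, so all but finitely many profiles sit below the small-data scattering threshold of \cite{GNW}, for which $\|\vec{\np U}^j\|_S\lec\|(\lp f^j,\lp g^j)\|_{H^1\times L^2}$ is $\ell^2$-summable; thus $\sup_{J,n}\sum_{j\le J}\|\vec{\np U}_n^j\|_S<\I$, and likewise for the weaker $Z$- and $\widetilde S$-norms. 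Strichartz together with the linear orthogonality gives $\sup_{J,n}\|\vec R_n^J\|_S<\I$, so $\sup_{J,n}\|\vec\cU_n^J\|_S<\I$.

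Each type (ii) term has a slot equal to $\vec R_n^J$, so \eqref{NL est} (applied to the bilinear and trilinear pieces, with the profile indices summed by the elementary $\sum_j a_j^{1-\te}b_j^\te\le(\sum_j a_j)^{1-\te}(\sum_j b_j)^\te$) bounds it by $\|\vec R_n^J\|_Z^\te=\|(\pe\cU_n,\pe\cN_n)\|_Z^\te$ times uniformly bounded factors, and this tends to $0$ by \eqref{smallness}. For type (i), $J$ being fixed there are finitely many such terms, so I would send $n\to\I$ and use \eqref{separation}. For $j\neq k$ one cannot have $t_\I^j=t_\I^k=0$, so at least one of $t_n^j,t_n^k$ diverges, and since $|t_n^j-t_n^k|\to\I$, on the window $I_n:=[t_n^j-T,t_n^j+T]$ the shifted profile $\vec{\np U}_n^k$ lies in an interval of width $2T$ at distance $\to\I$ from its own center, while on $I_n^c$ the profile $\vec{\np U}_n^j$ is confined outside $\{|t-t_n^j|\le T\}$. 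Because finite Strichartz norms are absolutely continuous in time, and the weak $Z$-norms of a scattering solution decay dispersively as $|t|\to\I$, both $\|\vec{\np U}_n^k\|_{\widetilde S(I_n)}+\|\vec{\np U}_n^k\|_{Z(I_n)}$ and, for $T$ large, $\|\vec{\np U}_n^j\|_{\widetilde S(I_n^c)}+\|\vec{\np U}_n^j\|_{Z(I_n^c)}$ become arbitrarily small as $n\to\I$ and then $T\to\I$. Thus, splitting the $S$-norm over $I_n$ and $I_n^c$ for the bare bilinear boundary term (which is local in $t$) and applying \eqref{NL est} on each piece, and for $Q,T$ splitting the time-support of the nonlinearity inside the Duhamel integral and applying the $\widetilde S$-bound \eqref{Q-T} on each piece, one of the inputs always contributes a small $Z$- or $\widetilde S$-factor while the others stay uniformly bounded; hence each type (i) term vanishes, and summing the finitely many of them and then letting $J\to\bar J$ concludes.

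The main obstacle is the bare bilinear boundary term $U(t)B(\vec\cU(0))$: lacking any time integration, it escapes the dual-Strichartz/$\widetilde S$ bookkeeping used for $Q$ and $T$ and forces the use of $L^\I_t$-based norms, for which the naive decoupling ``one input is small in $L^\I_t(H^1\times L^2)$'' is false, since a scattering solution does not decay in the energy norm. The argument survives only because of the interpolation structure of Lemma~\ref{B-Q-T}, which lets one spend a power of the \emph{weak} norm $Z$ — which does disperse as $|t|\to\I$ — keeping only bounded powers of $S$. A secondary point, which must be addressed explicitly, is that the $\vec{\np U}^j$ are genuine Klein-Gordon-Zakharov solutions rather than free waves; this is handled by the a priori bound $\|\vec{\np U}^j\|_S<\I$ (so each profile scatters and its Duhamel tail near $t=\pm\I$ is negligible) together with the global stability Lemma~\ref{Stability}.
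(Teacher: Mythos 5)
Your proposal takes essentially the same approach the paper intends. The paper omits a direct proof of Lemma~\ref{per}, deferring to the analogous argument in \cite{GNW2}, but it develops Lemma~\ref{B-Q-T} precisely for this purpose, and your architecture --- multilinear expansion with diagonal cancellation, vanishing of remainder pieces via \eqref{smallness} fed into the interpolated bound \eqref{NL est}, time-separation for cross terms using the absolutely continuous in time $\widetilde S$-bound \eqref{Q-T} for $Q,T$, and special treatment of the bare bilinear pieces via the weak $Z$-factor --- matches that blueprint. Your identification of $U(t)B(\vec\cU(0))$ as the delicate term (no Duhamel integration, hence no $\widetilde S$-bookkeeping) and of the interpolation $\|\cdot\|_S^{1-\te}\|\cdot\|_Z^\te$ as the way out is exactly the point of Lemma~\ref{B-Q-T}.

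One step as written is not correct and should be repaired. You assert $\sup_{J,n}\sum_{j\le J}\|\vec{\np U}_n^j\|_S<\I$, deducing it from the $\ell^2$-summability of $\|(\lp f^j,\lp g^j)\|_{H^1\times L^2}$; but $\ell^2$-summability does not give an $\ell^1$ bound, and in general $\sum_j\|\vec{\np U}_n^j\|_S$ diverges. Consequently the H\"older step $\sum_j a_j^{1-\te}b_j^\te\le(\sum_j a_j)^{1-\te}(\sum_j b_j)^\te$, while a correct inequality, bounds the left side by a quantity that may be infinite. The fix is standard and does not change the structure: what is true, and what you actually need, is $\sup_{J}\limsup_{n\to\I}\|\sum_{j\le J}\vec{\np U}_n^j\|_{S}<\I$ (and likewise in $Z$ and $\widetilde S$), obtained from the $\ell^2$-summability plus the approximate orthogonality of Strichartz norms as $n\to\I$ (the cross-interaction contributions to $\|\sum_j\vec{\np U}_n^j\|_S^q$ vanish by the same time-separation you already invoke). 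Then, instead of summing a multilinear estimate term by term over $j$, insert the full sum $\sum_{j\le J}\vec{\np U}_n^j$ as a single input into the multilinear bounds \eqref{NL est}--\eqref{Q-T}; this gives the uniform bounds you want without ever needing an $\ell^1$ control. A minor wording point: your parenthetical ``the bare bilinear boundary term (which is local in $t$)'' conflates the Duhamel-free bilinear piece $B(\vec\cU)$ (which is indeed pointwise in $t$) with the genuine boundary term $U(t)B(\vec\cU(0))$ (which is a free evolution of a fixed datum and is \emph{not} local in $t$); the latter must be handled through the smallness of the datum $B(\vec\cU(0))$ itself, using a single-time version of the interpolation estimate together with the fact that at $t=0$ at least one of the shifted profiles is near its scattering regime.
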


The proofs of Lemma \ref{Stability} and Lemma \ref{per} are almost the same with Zakharov case in \cite{GNW2} since we have  got the same  nonlinear estimates with the latter. By these two lemmas, we can prove Lemma \ref{NL
profile} and Lemma \ref{lem:crit} similarly as \cite{GNW2}; and hence we omit the details here.

\section{Rigidity Theorem}

The main purpose of this section is to disprove the existence of
critical element that was constructed in the previous section under the assumption $E^*<J(Q)$. The
main tool is the spatial localization of the virial identity. We
 prove

\begin{thm}[Rigidity Theorem]\label{rigidity}
Let $(\cU,\cN)$ be a global solution to \eqref{eq:KGZ2} satisfying
$K_i(\cU)\geq 0$ for $i=0,2$, and $E(\cU,\cN)< J(Q)$. Moreover, assume $\{(\cU,\cN)(t):t\in \R\}$ is
precompact in $H^1\times L^2$. Then $\cU=\cN\equiv 0$.
\end{thm}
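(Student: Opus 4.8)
The plan is to run the classical Kenig--Merle rigidity argument using the spatially localized virial identity from Section~2. Since $K_i(\cU_0)\ge 0$ and $E<J(Q)$, Corollary~\ref{cor:cor} gives that the solution is global with $\|\cU(t)\|_{H^1}^2+\|\cN(t)\|_{L^2}^2+\|D^{-1}\dot n(t)\|_{L^2}^2$ uniformly bounded, and Lemma~\ref{lem:var} (applied with $\nu$ slightly larger than $\|n-u^2\|_{L^2}$, which is uniformly bounded via the energy) yields the coercive lower bound
\EQ{
 4K_2(u(t))+\nu^2 \ge \sqrt 6\,\nu\|u(t)\|_{L^4}^2,
}
so that the integrand of $I'(t)$ controls $\|\nabla u\|_{L^2}^2+\|\dot u\|_{L^2}^2+\|\dot n\|_{\dot H^{-1}}^2+\|n-u^2\|_{L^2}^2$ from below by a fixed positive multiple of the total energy, unless $\cU\equiv\cN\equiv 0$. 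Concretely, one shows there is $c_0>0$ with $I'(t)\ge c_0\,(J(Q)-E)$ for all $t$, provided $(\cU,\cN)\not\equiv 0$; here one uses that the first inequality of Lemma~\ref{lem:var} forces a strictly positive gap, exactly as in the blow-up computation for $\tilde I_1''$ but now on the ``good'' side of the dichotomy.

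Next I would localize: set $I_R(t):=-2\LR{\dot u,\chi_R(x\cdot\nabla+\tfrac d2)u}-\tfrac1{\al^2}\LR{D^{-1}\dot n,D^{-1}\chi_R(x\cdot\nabla+\tfrac{d+1}2)n}$ with $\chi_R(x)=\chi(x/R)$, $\chi$ a smooth radial cutoff equal to $x$ on $|x|\le 1$ and supported in $|x|\le 2$ (the first-order operators being understood as in the definition of $I(t)$, symmetrized). Differentiating, one recovers $I_R'(t)=I'(t)+\text{(error)}$, where the error is supported in $\{|x|\ge R\}$ and is bounded by $C\int_{|x|\ge R}(|u|^2+|\nabla u|^2+|\dot u|^2+|n|^2+|D^{-1}\dot n|^2+|u|^4)\,dx$ plus the nonlocal $D^{-1}$ commutator contributions from the $n$-part. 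Precompactness of $\{(\cU,\cN)(t):t\in\R\}$ in $H^1\times L^2$ (hence tightness: for every $\epsilon>0$ there is $R_\epsilon$ with $\int_{|x|\ge R_\epsilon}(\dots)<\epsilon$ uniformly in $t$) makes the local part of the error arbitrarily small uniformly in $t$ for $R$ large. Therefore, choosing $R$ large enough, $I_R'(t)\ge \tfrac{c_0}{2}(J(Q)-E)>0$ for all $t\in\R$. On the other hand, $|I_R(t)|\le C_R(\|\dot u\|_{L^2}\|u\|_{L^2}+\|D^{-1}\dot n\|_{L^2}\|D^{-1}(\dots)n\|_{L^2})\le C_R$ is bounded uniformly in $t$ by the uniform energy bound and the $x$-truncation (which supplies the otherwise-missing factor of $|x|$), so integrating the lower bound from $0$ to $T$ and letting $T\to\infty$ contradicts boundedness. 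Hence $(\cU,\cN)\equiv 0$.

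The main obstacle I anticipate is the treatment of the nonlocal terms involving $D^{-1}\dot n$ and $D^{-1}$ applied to the truncated $n$: unlike the purely local Klein--Gordon case, the wave-part of the virial carries the operator $D^{-1}(x\cdot\nabla+\tfrac{d+1}2)D^{-1}$, and localizing it produces commutators $[D^{-1},\chi_R]$ that are not compactly supported. The remedy, following \cite{GNW2}, is to exploit that $D^{-1}(x\cdot\nabla+\tfrac{d+1}2)D^{-1}$ is a bounded (order $0$, in fact Calder\'on--Zygmund-type) operator so that the commutator with $\chi_R$ gains a factor $R^{-1}$ in an $L^2\to L^2$ bound, or alternatively to work with $m:=D^{-1}\dot n/\al$ directly and note $\|m\|_{L^2}$ is controlled by the energy; the truncation error for the $n$-block is then $O_R(R^{-\theta})$ times the uniformly bounded energy, which again can be absorbed. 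A secondary technical point is verifying that $\sqrt 6\,\nu$-coercivity of $4K_2+\nu^2$ indeed survives the presence of the term $-\tfrac{d-1}2\LR{n-u^2,u^2}$ in $I'(t)$; this is handled by Young's inequality together with the uniform smallness of $\|n-u^2\|_{L^2}$ relative to the energy gap, exactly as in the estimate of $\tilde I_1''$ in Section~\ref{sec:blow up}. Once these two points are dispatched, the monotonicity-versus-boundedness contradiction closes the proof.
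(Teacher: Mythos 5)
Your overall strategy (localize the virial, use precompactness to control tails, contradict the $O(R)$ bound on $I_R$) is the right one and agrees with the paper's. But there is a genuine gap at the heart of the argument: the pointwise lower bound you assert, namely that $I'(t)\ge c_0\,(J(Q)-E)$ for all $t$ unless the solution vanishes, is false, and cannot follow from Lemma~\ref{lem:var} alone. Note from Section~2 that
\EQ{
I'(t)=2K_2(u)+\frac{1}{2\alpha^2}\norm{\dot n}_{\dot H^{-1}}^2+\frac12\norm{n-u^2}_{L^2}^2-\frac{d-1}{2}\LR{n-u^2,u^2},
}
which contains no $\norm{\dot u}_{L^2}^2$ and no $\norm{u}_{L^2}^2$; your claim that the integrand ``controls $\norm{\nabla u}_{L^2}^2+\norm{\dot u}_{L^2}^2+\norm{\dot n}_{\dot H^{-1}}^2+\norm{n-u^2}_{L^2}^2$ from below by a fixed positive multiple of the total energy'' is therefore not true. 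Lemma~\ref{lem:var} (after absorbing the cross term) only gives $I'(t)\gec K_2(u)+\norm{n-u^2}_{L^2}^2+\norm{\dot n}_{\dot H^{-1}}^2$, and at a time where $\norm{\nabla u}_{L^2}$, $\norm{n}_{L^2}$, $\norm{\dot n}_{\dot H^{-1}}$ happen to be small while $\norm{u}_{L^2}$ and $\norm{\dot u}_{L^2}$ are order one, this right-hand side can be arbitrarily small relative to $E$. The constraint $K_2\ge 0$ and compactness do not rule this out pointwise.

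The paper circumvents this with a genuine extra step (Step 1: ``energy trapping'') that you do not have. It first proves, by a compactness contradiction, the inequality \eqref{2norm bdd} bounding $\norm{u}_{L^2}^2$ in terms of $\norm{\nabla u}_{L^2}^2$ plus an $\e$-fraction of the remaining energy quantities; feeding this into the derivative of the auxiliary quantity $\LR{u,\dot u}_{L^2}$ (a second, $L^2$-scaling, virial) and integrating in time yields the \emph{averaged} lower bound \eqref{average+}:
\EQ{
\int_0^t \bigl(K_2(u)+\norm{\dot n}_{\dot H^{-1}}^2+\norm{n-u^2}_{L^2}^2\bigr)\,ds \ge c\,t\,E
}
for $t>0$. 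Only this time-averaged coercivity (not a pointwise one) is then combined with the tail estimates of Step~2 and the cutoff virial of Step~3 to reach the contradiction with $|I_R(t)|\lec R$. To repair your proof you would need to supply this averaging argument (or an equivalent substitute), since the monotonicity you invoke simply does not hold at every fixed time. Your remarks about the nonlocal $D^{-1}$ commutators are in the right spirit, but the paper actually devotes a separate lemma (with Coifman--Meyer-type kernel bounds) to the commutator $[D,\psi_R]$ acting on $D^{-1}(x\cdot\nabla n)$, which requires more than an $L^2\to L^2$ order-zero commutator bound gaining $R^{-1}$.
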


\begin{proof}
By contradiction, we assume $(\cU,\cN)\ne (0,0)$. Then by the
compactness we may assume further $\cU\ne 0$, since otherwise $\cN$ would be a free wave and dispersive. We divide the proof into
the following three steps:

{\bf Step 1:} Energy trapping.

We claim that for any $\varepsilon>0$ there exists $C>1$ such that
\EQ{\label{2norm bdd}
 \|u(t)\|_{L^2}^2\leq C \|\nabla u(t)\|_{L^2}^2+\varepsilon ( \|\dot{u}(t)\|_{L^2}^2+ \frac{1}{2}\|n(t)\|_{L^2}^2+ \frac{1}{2\alpha^2}\|\dot{n}(t)\|_{\dot{H}^{-1}}^2),}
 for all $t\in \mathbb{R}$. Furthermore, there exist $c>0$ such that
\EQ{\label{average+}
  \int_0^t (K_2(u(t))+\|\dot{n}(t)\|_{\dot{H}^{-1}}^2+\|n-u^2\|_{L^2}^2)dt>ct E(u,\dot{u},n,\dot{n}),}
 for all $t>0$.

 First, we prove \eqref{2norm bdd}. Otherwise there exists a sequence $t_k$, $k\in\mathbb{N}$ such that
 \EQ{
 \|u(t_k)\|_{L^2}^2>k\|\nabla u(t_k)\|_{L^2}^2+\varepsilon ( \|\dot{u}(t_k)\|_{L^2}^2+ \frac{1}{2}\|n(t_k)\|_{L^2}^2+\frac{1}{2\alpha^2}\|\dot{n}(t_k)\|_{\dot{H}^{-1}}^2).}
  Since $u(t)$ is $L^2$ bounded, it follows $$\|\nabla u(t_k)\|_{L^2}^2\rightarrow0.$$ By the precompactness of $\{u(t):t\in \R\}$, we
get that up to a subsequence $(u(t_k),n(t_k))$ converges to some $(f,g)$ in $H^1\times L^2$, so the above implies that $u(t_k)\rightarrow0$ strongly in $H^1$. Then by the above inequality, we have
$$ \|\dot{u}(t_k)\|_{L^2}^2+\|n(t_k)\|_{L^2}^2+ \|\dot{n}(t_k)\|_{\dot{H}^{-1}}^2\rightarrow 0$$
which contradicts to the energy conservation and \eqref{eq:Eequiv}.

Next, we prove \eqref{average+}. Applying \eqref{2norm bdd} with $\varepsilon=1/4$, we have
\EQ{&\partial_t\langle u(t),
\dot{u}(t)\rangle_{L^2}=\|\dot{u}\|_{L^2}^2-\|\nabla
u\|_{L^2}^2
-\| u\|_{L^2}^2+\int_{\R^3}nu^2dx\\
=&\|\dot{u}\|_{L^2}^2-\|\nabla
u\|_{L^2}^2
-\| u\|_{L^2}^2-\frac{1}{2}\|n-u^2\|_{L^2}^2+\frac{1}{2}\|n\|_{L^2}^2+\frac{1}{2}\|u\|_{L^4}^4\\
\geq&\frac{1}{2}\|\dot{u}\|_{L^2}^2
+\| u\|_{L^2}^2+\frac{1}{4}\norm{n}_{L^2}^2-C\|\nabla
u\|_{L^2}^2-\frac{1}{4\alpha^2}\|\dot{n}\|_{\dot{H}^{-1}}^2-\frac{1}{2}\|n-u^2\|_{L^2}^2
 .}
Thus,
\EQ{&\int_0^t(C\|\nabla
u\|_{L^2}^2+\frac{C}{4\alpha^2}\|\dot{n}(t)\|_{\dot{H}^{-1}}^2+\frac{1}{2}\|n-u^2\|_{L^2}^2)dt\\
\geq &\int_0^t(\frac{1}{2}\|\dot{u}\|_{L^2}^2
+\| u\|_{L^2}^2+\frac{1}{4}\norm{n}_{L^2}^2+\frac{C-1}{4\alpha^2}\|\dot{n}\|_{\dot{H}^{-1}}^2)dt-CE(u,\dot{u},n,\dot{n})\\
\geq &ct E(u,\dot{u},n,\dot{n}).}
By Lemma \ref{lem:sign}, \eqref{average+} was obtained.

{\bf Step 2:} Uniform small tails.

We claim that for any $\e>0$, there
exists $R>0$ such that at any $t\in \R$, we have
\begin{align*}
\int_{|x|\geq R}\big(&|\nabla u|^2+|u|^2+|u|^4+|\dot{u}|^2+|\nabla
D^{-1} u^2|^2\\
&|\ft^{-1}(|\xi|^{-1}|\wh n(\xi)|)|^6+|n|^2
+|D^{-1}\dot{n}|^2+|\nabla D^{-1}n|^2\big)dx<\e.
\end{align*}
Indeed, since $\{(\cU,\cN)(t):t\in \R\}$ is precompact in $H^1\times
L^2$, by Sobolev embedding and the $L^p$-boundedness of
$D^{-1}\nabla$, we get the result. Then the claim follows immediately.

{\bf Step 3:} Contradiction to the local virial estimates.

For any $R>1$, let $\psi_R(x)=\psi(x/R)$, where $\psi\in C_0^\I(\R^3)$ is a fixed radial function satisfying $0\le\psi\le 1$, $\p_r\psi\le 0$,
$\psi(x)=1$ for $|x|\le 1$ and $\psi(x)=0$ for $|x|\ge 2$.
We consider the local virial estimates as follows:
\EQ{I_R(t)=-2\langle\psi_R\dot{u}(t),(x\cdot\nabla+\frac{3}{2})u(t)\rangle
-\frac{1}{\alpha^2}\langle\psi_RD^{-1}\dot{n}(t), D^{-1}(x\cdot\nabla+2)n(t)\rangle.}
First, we observe that
\begin{align}\label{eq:I_R bound}
|I_R(t)| \lec R\|\dot{u}\|_{L^2}\| u\|_{H^1}+R\|N\|_{L^2}^2\lec R\ \text{for}\ \forall t>0.
\end{align}
On the other hand,
\EQ{I_R'(t)=&-2\frac{d}{dt}\langle\psi_R\dot{u}(t),(x\cdot\nabla+\frac{3}{2})u(t)\rangle\\
&-\frac{1}{\alpha^2}\frac{d}{dt}\langle\psi_RD^{-1}\dot{n}(t),D^{-1}(x\cdot\nabla+2)n(t)\rangle\\
:=&I+II.}

First, we compute $I$. Using the equation and integration by parts,
\EQ{I=&-2\langle\psi_R(\triangle u-u+nu),(x\cdot\nabla+\frac{3}{2})u \rangle-2\langle\psi_R\dot{u},(x\cdot\nabla+\frac{3}{2})\dot{u}\rangle\\
=&2\langle\nabla\psi_R\cdot\nabla u,(x\cdot\nabla+\frac{3}{2})u \rangle+2\langle\psi_R,|\nabla u|^2\rangle\\
&
-\langle\nabla\psi_R\cdot x,|\nabla u|^2\rangle-\langle\nabla\psi_R\cdot x,|u|^2\rangle\\
&+\langle\nabla\psi_R\cdot x, n u^2\rangle+\langle\psi_R x\cdot\nabla n, u^2\rangle+\langle\nabla\psi_R\cdot x,|\dot{u}|^2\rangle.}

Next, we compute $II$.
\EQ{II=&-\langle\psi_RD^{-1}\triangle (n-u^2),D^{-1}(x\cdot\nabla+2)n \rangle\\
&-\frac{1}{\alpha^2}\langle\psi_RD^{-1}\dot{n},(x\cdot\nabla+1)D^{-1}\dot{n} \rangle\\
=&\langle\nabla\psi_R\cdot\nabla D^{-1}n,(x\cdot\nabla+1)D^{-1}n \rangle+\frac{1}{2}\langle\psi_R,|\nabla D^{-1}n|^2 \rangle\\
&
-\frac{1}{2}\langle x\cdot\nabla\psi_R,|\nabla D^{-1}n|^2 \rangle+\langle\psi_RD^{-1}\triangle u^2,D^{-1}(x\cdot\nabla+2)n \rangle\\
&+\frac{1}{2\alpha^2}\langle\psi_R,| D^{-1}\dot{n}|^2\rangle+\frac{1}{2\alpha^2}\langle x\cdot\nabla\psi_R,| D^{-1}\dot{n}|^2\rangle.}

Thus, we get
\EQ{I_R'(t)=&I+II\\
=&2\|\nabla u\|_{L^2}^2+\frac{1}{2}\norm{n}_{L^2}^2+\frac{1}{2\alpha^2}\norm{\dot{n}}_{\dot{H}^{-1}}^2
-2\int_{\R^d}nu^2dx+T_R(t)\\
=&2K_2(u)+\frac{1}{2\alpha^2}\norm{\dot{n}}_{\dot{H}^{-1}}^2+\frac{1}{2}\norm{n-u^2}_{L^2}^2-\LR{n-u^2,u^2}+T_R(t),}
where
\EQ{T_R(t)=&-2\langle1-\psi_R,|\nabla u|^2\rangle-\frac{1}{2}\langle1-\psi_R,|\nabla D^{-1}n|^2\rangle\\
&-\frac{1}{2\alpha^2}\langle1-\psi_R,| D^{-1}\dot{n}|^2\rangle-2\langle1-\psi_R,\nabla D^{-1}u^2\cdot \nabla D^{-1}n \rangle\\
&+2\langle\nabla\psi_R\cdot\nabla D^{-1}u^2,D^{-1}n \rangle-\langle\nabla\psi_R\cdot x,|\nabla u|^2\rangle\\
&-\langle\nabla\psi_R\cdot x,|u|^2\rangle+\langle\nabla\psi_R\cdot x, n u^2\rangle\\
&
+\langle\nabla\psi_R\cdot x,|\dot{u}|^2\rangle+\langle\nabla\psi_R\cdot\nabla D^{-1}n,(x\cdot\nabla+1)D^{-1}n \rangle\\
&
-\frac{1}{2}\langle x\cdot\nabla\psi_R,|\nabla D^{-1}n|^2 \rangle+\frac{1}{2\alpha^2}\langle x\cdot\nabla\psi_R,| D^{-1}\dot{n}|^2\rangle\\
&-\langle[D, \psi_R](D^{-1}(x\cdot\nabla n)),|u|^2\rangle+2\langle\nabla\psi_R\cdot\nabla u,(x\cdot\nabla+\frac{3}{2})u \rangle
}
and the commutator \EQ{[D, \psi_R]f:=D(\psi_Rf)-\psi_RDf.}
From Lemma \ref{lem:var}, we get that
\EQ{I_R'(t)=&2K_2(u)+\frac{1}{2\alpha^2}\norm{\dot{n}}_{\dot{H}^{-1}}^2+\frac{1}{2}\norm{n-u^2}_{L^2}^2-\LR{n-u^2,u^2}+T_R(t)\\
&\geq 2(1-\frac{2}{\sqrt{6}})K_2(u)+(\frac{1}{2}-\frac{1}{\sqrt{6}})\norm{n-u^2}_{L^2}^2+\frac{1}{2\alpha^2}\norm{\dot{n}}_{\dot{H}^{-1}}^2+T_R(t)}
for $\forall t\geq0$.
Assuming for the moment that $\lim_{R\rightarrow\I}T_R(t)=0$, by \eqref{average+}, we have
\EQ{\int_0^tI_R'(t)dt\geq ct E(u,\dot{u},n,\dot{n})}
for $R$ sufficiently large. Thus we get
\[I_R(t)\geq I_R(0)+ct E(u,\dot{u},n,\dot{n}),\]
which contradicts \eqref{eq:I_R bound} for sufficiently large $t$.

To prove $\lim\limits_{R\to\I}T_R(t)=0$, it remains to handle the
commutator term
\[C_R(t):=\langle[D, \psi_R](D^{-1}(x\cdot\nabla n)),|u|^2\rangle,\]
since all the rest terms in $T_R(t)$ follow immediately from step 2
and H\"{o}lder inequality. Note that \EQ{[D,
\psi_R](D^{-1}(x\cdot\nabla n))=[D, \psi_R](x\cdot \nabla
D^{-1}n)-[D, \psi_R](D^{-1}n).} Then this term follows from the
lemma below by taking $f=D^{-1}n$.
\end{proof}

\begin{lem} Assume $0<\epsilon<1$ and $R\ges 1$. Then
\begin{align}
\norm{[D,\psi_R]f}_{L^2}\les& \norm{g}_{L^6(|x|\ges R^{1-\epsilon})}+R^{-\epsilon}\norm{Df}_{L^2},\label{eq:v1}\\
\norm{[D,\psi_R]x\cdot \nabla f}_{L^2}\les& \norm{D
f}_{L^2}, \label{eq:vier2}\\
\norm{[D,\psi_R]x\cdot \nabla f}_{L^2(|x|\les R^{1-\epsilon})}\les&
R^{-\epsilon/2}\norm{D f}_{L^2}+\norm{g}_{L^6(|x|\ges
R^{1-\epsilon})}, \label{eq:vier3}
\end{align}
where $g=\ft^{-1}|\hat f|$.
\end{lem}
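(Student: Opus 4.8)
I would read all three bounds as estimates for the commutator of $D=(-\Delta)^{1/2}$ with the bump $\psi_R$, which is $\equiv 1$ on the (convex) ball $\{|x|\le R\}$ and $\equiv 0$ on $\{|x|\ge 2R\}$. The plan is to first record the pointwise representation
\[
 [D,\psi_R]f(x)=c_3\,\mathrm{p.v.}\!\int_{\R^3}\frac{\psi_R(x)-\psi_R(y)}{|x-y|^4}\,f(y)\,dy,
\]
coming from the singular-integral formula for $(-\Delta)^{1/2}$ in $\R^3$. Two features of this kernel drive everything: first, $|\psi_R(x)-\psi_R(y)|\lesssim\min(1,|x-y|/R)$; second, the kernel is \emph{anchored to the shell} $\{|z|\sim R\}$ — it vanishes unless $|x|>R$ or $|y|>R$, since $\psi_R$ is constant on the ball. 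I would also use the exact dilation identity $[D,\psi_R]=R^{-1}U_R[D,\psi]U_R^{-1}$ ($U_R$ the unitary scaling, $\psi$ the fixed profile), so that $\|[D,\psi_R]\|_{L^2\to L^2}\lesssim R^{-1}$ follows from the Calderón–Zygmund $L^2$-boundedness of the $R=1$ commutator, and all powers of $R$ become bookkeeping.

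For \eqref{eq:v1} I would split the kernel at distance $\rho:=R^{1-\epsilon}$. On the near piece $|x-y|\le\rho$ one has $|\psi_R(x)-\psi_R(y)|\lesssim\rho/R=R^{-\epsilon}$, and — crucially — the leading part $\nabla\psi_R(x)\cdot(x-y)$ has vanishing spherical average, so the truncated commutator is a genuine principal-value operator; scaling reduces the bound to the $R=1$ estimate $\|[D,\psi]_{|x-y|\le\delta}f\|_{L^2}\lesssim\delta\|Df\|_{L^2}$ (Littlewood–Paley/symbol calculus plus the p.v.\ cancellation), yielding the contribution $R^{-\epsilon}\|Df\|_{L^2}$. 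On the far piece $|x-y|>\rho$ the kernel is $\lesssim|x-y|^{-4}\chi_{|x-y|>\rho}$ and, by the anchoring, is supported in $\{|x|>R\}\cup\{|y|>R\}$; combining the $|x-y|^{-4}$ decay with these support restrictions, a Schur/Young argument together with Sobolev $\dot H^1\hookrightarrow L^6$ (so that $\|f\|_{L^6}\sim\|Df\|_{L^2}$, which is essential because $\|f\|_{L^2}$ may be infinite for $f=D^{-1}n$) bounds it by a positive power of $R$ times $\|Df\|_{L^2}$ when the input sits near the origin, and by $\|g\|_{L^6(|x|\gtrsim R^{1-\epsilon})}$ when it comes from the genuine spatial tail of $f$; here one uses $\|g\|_{\dot H^s}=\|f\|_{\dot H^s}$ and, if needed, Kato's inequality $\bigl|\nabla|f|\bigr|\le|\nabla f|$ to stay within the stated right-hand side.

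For \eqref{eq:vier2} and \eqref{eq:vier3} the new ingredient is the weight $x\cdot\nabla$, which I would move off $f$ by integrating by parts in $y$ inside the representation: the dilation field becomes a divergence hitting the kernel, and the emerging weight $y$ is swallowed by the support of $\nabla\psi_R$, so that $y\cdot\nabla\psi_R(y)=\Phi(y/R)$ is a \emph{bounded} function of the same shape as $\psi_R$. This produces a main term $D(\Phi_R f)$ plus remainder commutators of the type already handled. The main term obeys $\|D(\Phi_R f)\|_{L^2}=\|\Phi_R f\|_{\dot H^1}\lesssim\|\Phi_R\nabla f\|_{L^2}+\|(\nabla\Phi_R)f\|_{L^2}\lesssim\|Df\|_{L^2}+R^{-1}\bigl|\{|x|\sim R\}\bigr|^{1/3}\|f\|_{L^6}\lesssim\|Df\|_{L^2}$, the $R$-scaling of $\nabla\Phi_R$ exactly balancing the measure of its support — this is why no $\|f\|_{L^2}$ can appear in \eqref{eq:vier2}; the remainders are absorbed via \eqref{eq:v1} and the near/far split. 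For \eqref{eq:vier3} one restricts the output to $\{|x|\lesssim R^{1-\epsilon}\}$: the anchoring then forces $|y|>R$ and $|x-y|\gtrsim R$, the kernel is nonsingular on this range, and an elementary Hölder estimate yields the gain $R^{-\epsilon/2}$ on top of $\|Df\|_{L^2}$, the genuinely long-range part again landing in $\|g\|_{L^6(|x|\gtrsim R^{1-\epsilon})}$.

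The step I expect to be the main obstacle is the near-diagonal Calderón–Zygmund estimate with the sharp power $R^{-\epsilon}$ (respectively $R^{-\epsilon/2}$): it is the only genuinely non-elementary ingredient, needing both the principal-value cancellation (the vanishing spherical average of $\nabla\psi_R(x)\cdot(x-y)$) and the exact dilation scaling of $[D,\psi_R]$. The second, subtler point is to ensure that throughout the long-range estimates one never produces an uncontrolled $\|f\|_{L^2}$ — which is forced by the intended application, where $f=D^{-1}n$ need not lie in $L^2$ — so all right-hand sides must be routed through $\|Df\|_{L^2}$ and the localized $L^6$-norm of $g$, and the anchoring of the kernel to $\{|x|\sim R\}$ has to be used at every turn.
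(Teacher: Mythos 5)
The paper proves all three estimates entirely on the Fourier side: it majorizes $|\ft([D,\psi_R]f)(\xi)|$ by a convolution of nonnegative Fourier data, which by Plancherel reduces $\|[D,\psi_R]f\|_{L^2}$ to $\bigl\|\ft^{-1}\bigl(|\xi_1|\,|\widehat{\psi_R}(\xi_1)|\bigr)\cdot g\bigr\|_{L^2}$; a near/far split in the physical variable of the kernel factor $\ft^{-1}(|\xi_1||\widehat{\psi_R}(\xi_1)|)$ (pointwise $\lesssim R^{-1}$) then yields \eqref{eq:v1}, and the harder \eqref{eq:vier3} exploits the explicit cancellation $|\xi_1+\xi_2|-|\xi_2|=\frac{(\xi_1+2\xi_2)\cdot\xi_1}{|\xi_1+\xi_2|+|\xi_2|}$ together with a Coifman--Meyer kernel bound. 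Your proposal instead works with the physical-space kernel $\frac{\psi_R(x)-\psi_R(y)}{|x-y|^4}$ — a genuinely different route, and the dilation identity, the $\min(1,|x-y|/R)$ bound, and the anchoring observation are all sound ingredients.

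However, there is a concrete gap that prevents your approach from reaching the stated conclusion. The right-hand sides of \eqref{eq:v1} and \eqref{eq:vier3} involve $\|g\|_{L^6(|x|\ge R^{1-\epsilon})}$ with $g=\ft^{-1}|\hat f|$, i.e.\ the physical function whose Fourier transform is the modulus $|\hat f|$. This quantity appears in the paper's proof precisely because the argument passes through Plancherel \emph{before} localizing in physical space: after replacing $\hat f$ by $|\hat f|$, the localization is applied to $g$, and that is what matches the ``uniform small tails'' that are established for $g$ in Step 2. A physical-space kernel argument of the type you propose necessarily localizes $f$ itself, and would (at best) produce $\|f\|_{L^6(|x|\ge R^{1-\epsilon})}$ on the right. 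There is no pointwise relation between $g$ and $f$ (in particular $g\neq|f|$: $g$ is the inverse Fourier transform of $|\hat f|$, which can have completely different spatial support from $f$), so the Kato-inequality step you invoke to ``stay within the stated right-hand side'' does not apply and the conversion cannot be made. Separately, some of the far-field and near-diagonal closings are only asserted: a Young-type argument with the $|x-y|^{-4}\chi_{|x-y|>\rho}$ tail does not land in $L^2$ from $L^6$ or $L^3$ data (the exponent $1+\tfrac12=\tfrac1p+\tfrac1q$ forces $p<1$), and the quadratic remainder in the near piece wants to be paired with $\|f\|_{L^2}$ unless one first Littlewood--Paley decomposes $f$. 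These are repairable, but the $g$-versus-$f$ mismatch is structural: it reflects the fact that the lemma is tailored to the Fourier-side strategy and cannot be recovered from a purely kernel-based argument without reformulating the statement.
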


\begin{proof}
First we show \eqref{eq:v1}. Taking Fourier transform, we have
\begin{align*}
&|\ft([D,\psi_R]f)(\xi)|\\
\les&
|\int_{\xi=\xi_1+\xi_2}(|\xi_1+\xi_2|-|\xi_2|)\wh{\psi_R}(\xi_1)\wh{f}(\xi_2)|\\
\les&\int_{|\xi_1|\ges |\xi_2|} |\xi_1|
|\wh{\psi_R}(\xi_1)|\cdot|\wh{f}(\xi_2)|+\int_{|\xi_1|\ll |\xi_2|}
|\xi_1|
|\wh{\psi_R}(\xi_1)|\cdot|\wh{f}(\xi_2)|\\
\les&\int_{\xi=\xi_1+\xi_2} |\xi_1|
|\wh{\psi_R}(\xi_1)|\cdot|\wh{f}(\xi_2)|.
\end{align*}
Then
\begin{align*}
\norm{[D,\psi_R]f}_2\les& \norm{\ft^{-1}(|\xi_1|
|\wh{\psi_R}(\xi_1)|)\cdot g}_2\\
\les& \norm{\ft^{-1}(|\xi_1|
|\wh{\psi_R}(\xi_1)|)}_3\norm{g}_{L^6(|x|\ges
R^{1-\epsilon})}+\norm{\ft^{-1}(|\xi_1|
|\wh{\psi_R}(\xi_1)|)}_{L^3(|x|\les R^{1-\epsilon})}\norm{g}_{L^6}\\
\les& \norm{\ft^{-1}(|\xi_1|
|\wh{\psi_R}(\xi_1)|)}_3\norm{g}_{L^6(|x|\ges
R^{1-\epsilon})}+R^{-\epsilon}\norm{Df}_{L^2}
\end{align*}
where we used $|\ft^{-1}(|\xi_1| |\wh{\psi_R}(\xi_1)|)|\les R^{-1}$
and embedding.

For \eqref{eq:vier2} and \eqref{eq:vier3}, direct computations show
that
\begin{align*}
&\ft([D ,\psi_R]x\cdot\nabla f)(\xi) \\
=&-
\int(|\xi| -|\xi_2| )\wh{\psi_R}(\xi-\xi_2)\nabla_{\xi_2}\cdot(\xi_2\wh{f}(\xi_2))d\xi_2\\
=&\int\nabla_{\xi_2}[(|\xi| -|\xi_2| )\wh{\psi_R}(\xi-\xi_2)]\cdot \xi_2\wh{f}(\xi_2)d\xi_2\\
=&\int-|\xi_2|\wh{\psi_R}(\xi-\xi_2)\wh{f}(\xi_2)d\xi_2+i\int(|\xi|
-|\xi_2| )\wh{x\psi_R}(\xi-\xi_2)\cdot
\xi_2\wh{f}(\xi_2)d\xi_2\\
=&\int_{\xi=\xi_1+\xi_2}-|\xi_2|\wh{\psi_R}(\xi_1)\wh{f}(\xi_2)+i(|\xi_1+\xi_2|
-|\xi_2| )\wh{x\psi_R}(\xi_1)\cdot \xi_2\wh{f}(\xi_2).
\end{align*}
 Thus we get
\begin{align*}
|\ft([D^\alpha,\psi_R]x\cdot\nabla
f)(\xi)|\les&\int_{\xi=\xi_1+\xi_2}
|\xi_2|(|\wh{\psi_R}(\xi_1)|+|\wh{x\psi_R}(\xi_1)|\cdot|\xi_1|)\cdot|\wh{f}(\xi_2)|
\end{align*}
and then
\begin{align*}
\norm{[D,\psi_R]x\cdot\nabla f}_{L^2}\les& \norm{D f}_{L^2}.
\end{align*}

Then we have
\begin{align*}
&\ft([D ,\psi_R]x\cdot\nabla f)(\xi)\\
=&\int_{\xi=\xi_1+\xi_2,|\xi_1|\ll
|\xi_2|}-|\xi_2|\wh{\psi_R}(\xi_1)\wh{f}(\xi_2)+i(|\xi_1+\xi_2|
-|\xi_2| )\wh{x\psi_R}(\xi_1)\cdot
\xi_2\wh{f}(\xi_2)\\
&+\int_{\xi=\xi_1+\xi_2,|\xi_1|\ges
|\xi_2|}-|\xi_2|\wh{\psi_R}(\xi_1)\wh{f}(\xi_2)+i(|\xi_1+\xi_2|
-|\xi_2| )\wh{x\psi_R}(\xi_1)\cdot
\xi_2\wh{f}(\xi_2) \\
:=&\ft[M(f)]+\ft[E(f)].
\end{align*}
As before, we have
\begin{align*}
|\ft[E(f)](\xi)|\les&\int_{\xi=\xi_1+\xi_2}
|\xi_1|(|\wh{\psi_R}(\xi_1)|+|\wh{x\psi_R}(\xi_1)|\cdot|\xi_1|)\cdot|\wh{f}(\xi_2)|
\end{align*}
and then
\[\norm{Ef}_{2}\les \norm{g}_{L^6(|x|\ges R^{1-\epsilon})}+R^{-\epsilon}\norm{Df}_{L^2},\label{eq:vier1}\\
.\]

To estimate $M(f)$, we need to exploit a cancelation. Since
\[|\xi_1+\xi_2| -|\xi_2|=\frac{(\xi_1+2\xi_2)}{|\xi_1+\xi_2|+|\xi_2|}\cdot \xi_1\]
we get
\begin{align}
\ft[M(f)]=&\int_{|\xi_1|\ll
|\xi_2|}-|\xi_2|\wh{\psi_R}(\xi_1)\wh{f}(\xi_2)\nonumber\\
&+\int_{|\xi_1|\ll
|\xi_2|}\frac{(\xi_1+2\xi_2)}{|\xi_1+\xi_2|+|\xi_2|}\cdot
i\xi_1\wh{x\psi_R}(\xi_1)\cdot \xi_2\wh{f}(\xi_2)\label{eq:Mf}
\end{align}
Denote $\xi_s=(\xi_{s,1},\xi_{s,2},\xi_{s,3}), s=1,2$, then the second
term equals to
\begin{align*}
&\int_{|\xi_1|\ll
|\xi_2|}\sum_{j,k=1}^3\frac{(\xi_{1,k}+2\xi_{2,k})}{|\xi_1+\xi_2|+|\xi_2|}\cdot
i\xi_{1,k}\wh{x_j\psi_R}(\xi_1)\cdot \xi_{2,j}\wh{f}(\xi_2)\\
=&\int_{|\xi_1|\ll
|\xi_2|}\frac{(\xi_1+2\xi_2)}{|\xi_1+\xi_2|+|\xi_2|}\cdot
\xi_2\wh{\psi_R}(\xi_1)\wh{f}(\xi_2)\\
&+\int_{|\xi_1|\ll
|\xi_2|}\frac{(\xi_1+2\xi_2)}{|\xi_1+\xi_2|+|\xi_2|}\cdot
 \wh{x\otimes \nabla {\psi}_R}(\xi_1)\cdot \xi_2\wh{f}(\xi_2).
\end{align*}
Thus, we get
\begin{align*}
\ft[M(f)]=&\int_{|\xi_1|\ll
|\xi_2|}(\frac{(\xi_1+2\xi_2)}{|\xi_1+\xi_2|+|\xi_2|}\cdot
\xi_2-|\xi_2|)\wh{\psi_R}(\xi_1)\wh{f}(\xi_2)\\
&+\int_{|\xi_1|\ll
|\xi_2|}i\frac{(\xi_1+2\xi_2)}{|\xi_1+\xi_2|+|\xi_2|}\cdot
\frac{\xi_2}{|\xi_2|} \wh{\tilde{\psi}_R}(\xi_1)\wh{D
f}(\xi_2)\\
=&\ft[I]+\ft[II].
\end{align*}

For I, by mean value formula, we have
\[|I|\les \int_{|\xi_1|\ll
|\xi_2|}|\xi_1|\cdot|\wh{\psi_R}(\xi_1)|\cdot|\wh{f}(\xi_2)|\] and
then
\[\norm{I}_{2}\les \norm{g}_{L^6(|x|\ges R^{1-\epsilon})}+R^{-\epsilon}\norm{Df}_{L^2}.\]

For II, we see
\[II=\int K(x-y_1,x-y_2)\tilde{\psi}_R(y_1)D  f(y_2)dy_1dy_2\]
where $K$ is the kernel for the bilinear multiplier
\[K(x,y)=\int e^{i(x\xi_1+y\xi_2)}m(\xi_1,\xi_2)d\xi_1d\xi_2\]
with the symbol
\[m(\xi_1,\xi_2)=\frac{(\xi_1+2\xi_2)}{|\xi_1+\xi_2|+|\xi_2|}\cdot
\frac{\xi_2}{|\xi_2|} \cdot 1_{|\xi_1|\ll |\xi_2|}.\] It is easy to
see from direct computations that $m$ satisfy the Coifman-Meyer's
H\"ormander-type condition, and then
\[|K(x-y_1,x-y_2)|\les (|x-y_1|+|x-y_2|)^{-6}.\]
If $|y_1|\sim R, |x|\les R^{1-\epsilon}$, then $|K(x-y_1,x-y_2)|\les
(R+|y_2|)^{-6}$. Thus we get
\[\norm{II}_{L^2(|x|\les R^{1-\epsilon})}\les R^{-\epsilon/2}\norm{D  f}_2.\]
Therefore, the lemma is proved.
\end{proof}

\subsection*{Acknowledgment}
Z. Guo is supported in part by NNSF of China (NO. 11001003, NO.11271023).
S. Wang is supported by China Scholarship Council and also in part by NNSF of China (NO. 11001003, NO.11271023).


\begin{thebibliography}{10}
\bibitem{B} P. M. Bellan, Fundamentals of plasmas physics, Cambridge University Press, 2006.

\bibitem{BG} H.~Bahouri and P.~G\'erard,  High frequency approximation of solutions to critical nonlinear wave equations, Amer. J. Math.  121(1)(1999), 131--175.

\bibitem{BL} H. Berestycki, P. L. Lions, Nonlinear scalar field equations. I. Existence of a ground state, Arch. Ration. Mech. Anal. 82(4)(1983), 313-345.

\bibitem{C} C. Coffman, Uniqueness of the ground state solution for $\Delta u-u+u^3=0$ and a variational characterization of other solutions,
Arch. Ration. Mech. Anal. 46(1983), 81-95.


\bibitem{D} R. O. Dency, Plasma dynamics, Oxford University Press, 1990.


\bibitem{GGZ} Z. Gan, B. Guo and J. Zhang, Instability of standing wave, global existence and blowup for the Klein-Gordon-Zakharov system with different-degree nonlinearities, J. Diff. Equ., 246(10)(2009), 4097-4128.

\bibitem{GN} Z. Guo and K. Nakanishi,  Small energy scattering for the Zakharov system with radial symmetry, Int. Math. Res. Notices (2013) (doi: 10.1093/imrn/rns296) (published online).

\bibitem{GNW2} Z. Guo, K. Nakanishi and S. Wang, Global dynamics below the ground state energy for the Zakharov system in the 3D radial
case, Advances in Mathematics 238(2013)412-441. (dx.doi.org/10.1016/j.aim.2013.02.008)


\bibitem{GNW} Z. Guo, K. Nakanishi and S. Wang, Small energy scattering for the Klein-Gordon-Zakharov system with radial
symmetry,  arXiv:1208.1657v1 [math.AP].

\bibitem{GZ}  Z. Gan and  J. Zhang, Instability of standing waves
for Klein-Gordon-Zakharov equations with different propagation
speeds in three space dimensions, J. Math. Anal. Appl. 307(2005),
219-231.

\bibitem{IMN} S. Ibrahim, N. Masmoudi and K. Nakanishi, Scattering threshold for the focusing nonlinear Klein-Gordon equation,
Anal. PDE, no.3, 4(2011) 405-460.

\bibitem{KM} C. E. Kenig, F. Merle,  Global well-posedness, scattering and blow-up for the energy-critical, focusing, non-linear Schr\"odinger equation
in the radial case, Invent. Math. 166(2006), no.~3, 645--675.

\bibitem{SKMM} S. Klainerman and M. Machedon, Space-time estimates for null forms and the local existence theorem, Comm. Pure Appl. Math., 46(9): 1221-1268, 1993.

\bibitem{L} H. Lindblad and Counterexamples to local existense for semi-linear wave equations. Amer. J. Math. 118(1): 1-16, 1996.


\bibitem{MN1} N. Masmoudi and K. Nakanishi, From the Klein-Gordon-Zakharov system to the nonlinear Schr\"{o}inger equation,
J. Hyperbolic Differ. Equ. 2(4)(2005) 975-1008.

\bibitem{MN2} N. Masmoudi and K. Nakanishi, Energy convergence for singular limits of Zakharov type systems, Invent. Math. 172(3)(2008) 535-583.

\bibitem{MN3}   N. Masmoudi and K. Nakanishi,   From the Klein-Gordon-Zakharov system to a singular nonlinear Schr\"{o}inger system,
 Ann. I. H. Poincar\'{e}-AN 27 (2010) 1073-1096.

\bibitem{NS} K. Nakanishi, W. Schlag, Global dynamics above the ground state energy for the focusing nonlinear Klein-Gordon equation,
J. Differential Equations 250(2011), 2299-2333.

\bibitem{OT} M. Ohta and G. Todorova, Strong instability of standing waves for the nonlinear Klein-Gordon
equation and the Klein-Gordon-Zakharov system, SIAM Journal on
Mathematical Analysis 38 no. 6 (2007), 1912-1931.



\bibitem{OTT} T. Ozawa, K. Tsutaya and Y. Tsutsumi, Well-posedness in energy space for the
Cauchy problem of the Klein-Gordon-Zakharov equations with different
propagation speeds in three space dimensions, Math. Ann. 313(1999),
no. 1, 127-140.





\bibitem{S} W. A. Strass, Existence of solitary waves in higher dimensions, Comm. Math. Phys. 55(2)(1977), 149-162.

\end{thebibliography}
\end{document}